\documentclass[11pt]{amsart}
\usepackage{geometry}
\numberwithin{equation}{section}
\usepackage{amssymb}
\usepackage{xcolor}
\usepackage[T1,T5]{fontenc}
\usepackage{hyperref}
\hypersetup{   
colorlinks=true,       
linkcolor=black,          
citecolor=black,        
filecolor=black,      
urlcolor=black}     
\newcommand{\etalchar}[1]{$^{#1}$}

\newtheorem{theorem}{Theorem}[section]
\newtheorem{lemma}[theorem]{Lemma}
\newtheorem{lem}[theorem]{Lemma}
\newtheorem{prop}[theorem]{Proposition}
\newtheorem{proposition}[theorem]{Proposition}
\newtheorem{corollary}[theorem]{Corollary}

\theoremstyle{definition}

\newtheorem{definition}[theorem]{Definition}
\newtheorem{example}[theorem]{Example}

 \newtheorem*{ackn}{Acknowledgements}
 \newtheorem*{org}{Organization}

 \newcommand{\B}{\mathbb B}
 
 \newcommand{\R}{\mathbb R}
 
 \newcommand{\C}{\mathbb C}
 \newcommand{\N}{\mathbb N}

\begin{document}

\title[Complex Monge-Amp\`ere equations]{Complex Monge-Amp\`ere equations  with \\ boundary values:  A variational approach}

  \author{Papa Badiane}
\address{Papa Badiane: Laboratoire de Mathématiques et Applications\\ UFR des Sciences et Technologies\\
Université Assane Seck de Ziguinchor (Sénégal) }

\email{\href{mailto:p.badiane4963@zig.univ.sn }{\href{mailto:p.badiane4963@zig.univ.sn}{p.badiane4963@zig.univ.sn}}}

\author{Chinh H. Lu}
\address{Institut Universitaire de France \& Univ Angers, CNRS, LAREMA, SFR MATHSTIC, F-49000 Angers, France}
\email{\href{hoangchinh.lu@univ-angers.fr}{hoangchinh.lu@univ-angers.fr}} 

\author{Souhaibou Sambou}
\address{Laboratoire d'Analyse Numérique et Informatique\\ UFR des Sciences Appliquées et Technologies\\
	Université Gaston Berger de Saint-Louis (Sénégal)} 
\email{\href{souhaibou.sambou@ugb.edu.sn}{souhaibou.sambou@ugb.edu.sn}}

\author{Ahmed Zeriahi}
\address{Ahmed Zeriahi: Institut de Math\'ematiques de Toulouse   \\ Universit\'e de Toulouse \\
118 route de Narbonne,  31400 Toulouse, France.}

\email{\href{mailto:ahmed.zeriahi@math.univ-toulouse.fr}{ahmed.zeriahi@math.univ-toulouse.fr}}

 \subjclass[2020]{31C45, 32U15, 32U40,
32W20, 35J66, 35J96}

\keywords{Complex Monge-Amp\`ere operator, Dirichlet problem, variational method, plurisubharmonic envelope}

\begin{abstract} Let $\Omega \Subset \mathbb{C}^n$ be a bounded hyperconvex domain and $\phi : \Omega \rightarrow \mathbb{R}$ a maximal plurisubharmonic function  in $\Omega$ (possibly unbounded).
We  introduce a new class $\mathcal E^1_\phi(\Omega)$ of plurisubharmonic functions with  boundary values equal to $\phi$ in the sense of Cegrell and finite $\phi$-energy on  $ \Omega$. Then given a Borel measure $\mu$  on $\Omega$, we apply a variational approach  to prove that $\mu$ is the Monge-Ampère measure of a function $u \in \mathcal E^1_\phi(\Omega)$ if and only if it satisfies the integrability condition $\mathcal E^1_\phi(\Omega) \subset \phi+ L^1(\Omega,\mu)$.	We also establish a quantitative version of this condition extending previous results of Cegrell and others. Using a logarithmic transformation, we obtain analogous results for the real Monge-Amp\`ere equations with (unbounded) boundary data. 
\end{abstract}

\maketitle

\tableofcontents

\section{Introduction}
Given a bounded domain $\Omega \Subset \mathbb C^n$, boundary data $\phi$, and a positive Borel measure $\mu$ on $\Omega$ with locally finite mass, the Dirichlet problem for the complex Monge--Ampère operator with boundary value $\phi$ and right-hand side $\mu$ consists of finding a plurisubharmonic function $u$ such that
 \begin{equation}\label{eq:MA}
\left\lbrace
\begin{array}{lcr}
(dd^c u)^n= \mu & \textnormal{in} & \Omega,\\
u=\phi& \textnormal{on} & \partial \Omega.
\end{array}
\right.
\end{equation} 

When the data are sufficiently regular, one seeks a smooth solution $u$. In this case, the Monge--Ampère measure $(dd^c u)^n$ is, up to a positive multiplicative constant, the determinant of the complex Hessian of $u$. For degenerate data, however, one must consider weaker notions of solution. A fundamental result of Bedford and Taylor \cite{BT76} shows that the Monge--Ampère measure can be defined as a positive Radon measure for locally bounded plurisubharmonic functions.

When $\Omega$ is strictly pseudoconvex, $\phi\in C^0(\partial\Omega)$, and $\mu=fdV$ with $0\leq f\in C^0(\overline{\Omega})$, Bedford and Taylor \cite{BT76} proved that \eqref{eq:MA} admits a unique solution $u\in {\rm PSH}(\Omega)\cap C^0(\overline{\Omega})$. Later, Cegrell \cite{Ceg84} treated the case $f\in L^\infty(\Omega)$ and showed that the solution belongs to ${\rm PSH}(\Omega)\cap L^\infty(\Omega)$. Subsequently, Ko\l odziej \cite{Kol98} established existence and uniqueness of a solution $u\in {\rm PSH}(\Omega)\cap C^0(\overline{\Omega})$ when $f\in L^p(\Omega)$ for some $p>1$. 

On the other hand, when $n = 1$ the complex Monge-Ampère operator reduces to the Laplace operator  $dd^c u =   \Delta u$ and \eqref{eq:MA} is the classical Dirichlet problem for the Laplace operator.
When $\mu$ is singular, already in one dimension a solution--if it exists--may fail to attain the prescribed boundary values $\phi$ in the strong sense. A classical way to address this issue is to formulate a generalized Dirichlet problem using the notion of the least harmonic majorant. Given a harmonic function $h$ on $\Omega$, one seeks a subharmonic function $u$ such that $dd^c u = \mu$ and the least harmonic majorant of $u$ is $h$. The solution is then unique and, for sufficiently regular domains, is represented by the Riesz formula. Moreover, the boundary value is partially attained, $\limsup_{z\to \zeta} (u-h)(z) =0,\; \zeta\in \partial \Omega$. 
\smallskip

In higher dimension the problem is much more complicated because the complex Monge-Ampère operator is nonlinear and it is not defined for all plurisubharmonic functions. In his foundational works \cite{Ceg98,Ceg04}, Cegrell  introduced several classes of plurisubharmonic functions for which the Monge-Amp\`ere operator can be conveniently defined and enjoys natural continuity properties.  In these works the boundary data are assumed to be zero, $\phi\equiv 0$.  For nonzero boundary data that extend to a negative maximal plurisubharmonic function on $\overline{\Omega}$, Cegrell introduced the concept of least maximal plurisubharmonic (LMP) majorant \cite{Ceg08}, and partial results were subsequently obtained in Cegrell and several other authors; see \cite{Ceg08}, \cite{AC07}, \cite{ACCP09}.  More precisely we say that a plurisubharmonic function $u \in \mathcal E(\Omega)$ has boundary values $0$ if its least maximal plurisubharmonic (LMP) majorant $ \widetilde u$ is identically $0$ in $\Omega$ i.e. $\widetilde u \equiv 0$. We denote this class by $\mathcal N_0 (\Omega)$. Given a maximal plurisubharmonic function $\phi$ on $\Omega$ we say that $u$ has boundary value $\phi$ and write $u \in \mathcal N_\phi(\Omega)$ if there is a function $\psi \in \mathcal N_0(\Omega)$ such that $\psi + \phi \leq u \leq \phi$ in $\Omega$. This implies that $\widetilde u = \phi$ but it is a more precise way to say that the function $u-\phi$ has zero boundary value, while it's not plurisubharmonic. We will write this condition as $\widetilde{u-\phi} \equiv 0$. 

The purpose of the present work is to investigate the generalized Dirichlet problem in a bounded hyperconvex domain when the prescribed boundary value $\phi$ is a maximal plurisubharmonic function belonging to the domain of definition of the complex Monge--Ampère operator: 

\begin{equation}\label{eq:MA2}
\left\lbrace
\begin{array}{lcr}
(dd^c u)^n= \mu & \textnormal{on} & \Omega,\\
\widetilde{u-\phi} \equiv 0 & \textnormal{in} &  \Omega.
\end{array}
\right.
\end{equation} 

To this end, we introduce the class $\mathcal E^1_{\phi}({\Omega})$ of plurisubharmonic functions with boundary value $\phi$ and finite $\phi$-energy:
\begin{itemize}
\item there exists  $v \in  \mathcal E^1 (\Omega)$ such that $v + \phi \leq u \leq \phi$ in $\Omega$;
\item the partial $\phi$-energy of $u$ is finite i.e. $e_\phi(u) := \int_\Omega (\phi - u) (dd^c u)^n < \infty$. 
\end{itemize}
Observe that the first condition expresses that the LMP majorant of $u-\phi$ is identically $0$ in $\Omega$; see Subsection \ref{sect: MA operator}. When $n = 1$, by the Riesz representation formula one can show that $\mathcal{E}^1_{\phi}(\Omega)=\phi+\mathcal E^1(\Omega)$. This is no longer the case in higher dimensional; as demonstrated in Example \ref{example: E1}, even the inclusion $\phi+ \mathcal E^1(\Omega) \subset \mathcal{E}^1_{\phi}(\Omega)$  fails in general. 

A key ingredient in our approach is the integration by parts formula established in Theorem \ref{thm: integration by parts}. This formula enables us to derive several fundamental properties of the (full) energy functional $E_{\phi}$, including convexity and monotonicity. These properties, in turn, allow us to apply the variational framework developed in \cite{BBGZ13} and thereby solve the generalized Dirichlet problem.
 
 \begin{theorem} \label{thmA}
 Let $\mu$ be a non pluripolar positive Borel measure on $\Omega$ and $\phi \in  \mathcal M (\Omega)$. 
  
  Then the following properties are equivalent :
 \begin{enumerate}
 \item $\mathcal E^1_{\phi}({\Omega}) \subset \phi + L^1(\Omega,\mu);$
 \item there exists a constant $C = C(\mu,\Omega) > 0$ such that for any $v \in \mathcal E^1_{\phi}({\Omega})$,
 $$
 \int_\Omega (\phi - v) d \mu \leq C \left( 1 +  e_\phi(v)^{1\slash 2}\right);
 $$
 \item there exists a unique $u \in \mathcal{E}_\phi^1(\Omega)$ such that $(dd^c u)^n = \mu$.
 \end{enumerate}
\end{theorem}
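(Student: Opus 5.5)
We prove the cycle (3) $\Rightarrow$ (2) $\Rightarrow$ (1) $\Rightarrow$ (3). Throughout we use the energy functional $E_\phi$ on $\mathcal E^1_\phi(\Omega)$, defined by
\[
E_\phi(u) = \frac{1}{n+1}\sum_{j=0}^n \int_\Omega (u-\phi)(dd^c u)^j\wedge (dd^c\phi)^{n-j}.
\]
The integration by parts formula of Theorem \ref{thm: integration by parts} gives that $E_\phi$ is concave, nondecreasing and upper semicontinuous along decreasing sequences. It also gives the comparison $-e_\phi(u)\le (n+1)E_\phi(u) \le -e_\phi(u)/(n+1)$, up to the usual constants, and the standard Cauchy--Schwarz type inequalities for mixed terms $\int (\phi-u)(dd^c v)\wedge T$.

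\emph{(3) $\Rightarrow$ (2).} Let $u\in\mathcal E^1_\phi(\Omega)$ with $(dd^c u)^n=\mu$, and let $v\in\mathcal E^1_\phi(\Omega)$. Write
\[
\phi - v = (\phi-u)+(u-v).
\]
Then
\[
\int_\Omega (\phi-v)\,d\mu=\int_\Omega (\phi-v)(dd^c u)^n .
\]
Integrate by parts repeatedly, replacing one factor $dd^c u$ by $dd^c v$ at a time. Using the Hölder-type energy estimate (the analogue of Cegrell's / Persson's inequality, obtained from Theorem \ref{thm: integration by parts}), we bound this by
\[
C\, e_\phi(u)^{n/(n+1)}\, e_\phi(v)^{1/(n+1)}.
\]
This gives a bound of the form $C(1+e_\phi(v)^{1/2})$ after absorbing constants, since any exponent $\le 1/2$ is dominated by $1+e^{1/2}$; alternatively one proves directly an inequality with exponent $1/2$ via a single Cauchy--Schwarz step in the mixed form $\int (\phi-v)\,dd^c u\wedge T$.

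\emph{(2) $\Rightarrow$ (1)} is immediate.

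\emph{(1) $\Rightarrow$ (2).} Argue by contradiction in the style of Cegrell and of \cite{BBGZ13}. Suppose (2) fails. Then there are $v_j\in\mathcal E^1_\phi$ with $e_\phi(v_j)\le 4^{j}$ but
\[
\int (\phi-v_j)\,d\mu\ge 2^{2j}\bigl(1+e_\phi(v_j)^{1/2}\bigr).
\]
After rescaling, by replacing $v_j-\phi$ with a convex combination toward $\phi$ (note that $\phi+t(v_j-\phi)$ is psh and lies in $\mathcal E^1_\phi$ for $t\in[0,1]$, with energy scaling like $t^{n+1}$ or better), form
\[
v=\phi+\sum_j 2^{-j}(v_j-\phi).
\]
Using convexity of $E_\phi$, this is a decreasing limit with bounded energy, hence lies in $\mathcal E^1_\phi$. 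However $\int(\phi-v)\,d\mu=\infty$, which contradicts (1). The delicate point here is checking that the normalized series converges in $\mathcal E^1_\phi$, including the lower barrier $w+\phi\le v$ with $w\in\mathcal E^1(\Omega)$.

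\emph{(1)/(2) $\Rightarrow$ (3).} This is the main step and follows the variational method. Assume first $\mu(\Omega)<\infty$; otherwise approximate. Consider
\[
F(u)=E_\phi(u)-\int_\Omega (u-\phi)\,d\mu
\]
on $\mathcal E^1_\phi$. By (2), $F(u)\le -c\,e_\phi(u)+C(1+e_\phi(u)^{1/2})$, so $F$ is bounded above and sublevel sets have bounded energy. Take a maximizing sequence $u_j$. Their energies are bounded, so a subsequence converges in $L^1_{loc}$, and one passes to $\hat u_j=(\sup_{k\ge j}u_k)^*$, which decrease to a limit $u$. Bounded energy and the barrier give $u\in\mathcal E^1_\phi$. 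Upper semicontinuity of $E_\phi$, together with the semicontinuity of $u\mapsto\int(\phi-u)\,d\mu$, requires a compactness lemma: $\int(\phi-u_j)\,d\mu\to\int(\phi-u)\,d\mu$ on sets of bounded energy when $\mu$ is non-pluripolar and satisfies (2). Proving this lemma is the main obstacle. We first try truncating with $\max(u_j,\phi-k)$, using (2) to control tails uniformly, and using the quasi-continuity and capacity convergence of $u_j$ on each level to get convergence of the truncated integrals against a non-pluripolar $\mu$.

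Then $u$ maximizes $F$. For the Euler--Lagrange equation, use the projection argument of \cite{BBGZ13}. For continuous $\chi$ with compact support, set $P(u+t\chi)$, the psh envelope with boundary value $\phi$, which lies in $\mathcal E^1_\phi$. Differentiate via
\[
\frac{d}{dt}E_\phi(P(u+t\chi))=\int\chi\,(dd^c P(u+t\chi))^n,
\]
which comes from orthogonality $(dd^cPf)^n$ supported on $\{Pf=f\}$ and integration by parts. This yields $\int\chi\,(dd^c u)^n=\int\chi\,d\mu$, so $(dd^c u)^n=\mu$.

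Uniqueness follows from the comparison principle in $\mathcal E^1_\phi$, a consequence of Theorem \ref{thm: integration by parts}: if $(dd^c u)^n=(dd^c v)^n$, then integrating $(v-u)$ against $(dd^c u)^n-(dd^c v)^n$ with mixed terms forces $dd^c(u-v)=0$ in energy, and the boundary condition gives $u=v$.
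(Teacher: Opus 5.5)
There is a genuine gap in (2) $\Rightarrow$ (3), at exactly the step you flag. To know that the limit $u$ of a maximizing sequence is a maximizer, you need $\limsup_j\int_\Omega(\phi-u_j)\,d\mu\le\int_\Omega(\phi-u)\,d\mu$ along an energy-bounded sequence that converges only in $L^1_{loc}$.

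Since $\limsup_j u_j=u$ off a pluripolar set and $\mu$ is non-pluripolar, Fatou gives only the reverse inequality. Passing to $\hat u_j=(\sup_{k\ge j}u_k)^*$ does not help either: $\hat u_j\ge u_j$ raises your (concave) $E_\phi$ but lowers $\int(\phi-\cdot)\,d\mu$, so $(\hat u_j)$ need not be maximizing.

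Your sketch of the needed continuity lemma does not close it, for two reasons.
\begin{itemize}
\item An $L^1_{loc}$-convergent sequence of psh functions, truncated by $\phi-k$ or not, need not converge in capacity. Capacity convergence is guaranteed for monotone sequences, not for $L^1_{loc}$ limits.
\item (2) bounds the total integral $\int(\phi-v)\,d\mu$ by the energy, but gives no uniform smallness of the tails $\int_{\{u_j<\phi-k\}}(\phi-u_j)\,d\mu$ over $\mathcal E^1_{\phi,C}$. You cannot feed $\phi-\chi(\phi-u)$, with $\chi$ convex, into (2), because it is not psh.
\end{itemize}

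The paper never proves continuity of $I_\mu$ for a general $\mu$. It proves it only for $\mu$ dominated by $(dd^c\psi)^n$ with $\psi$ bounded (Lemma \ref{continuity I mu}). There Lemma \ref{lem:Blocki type inequality} gives a uniform $L^2(\mu)$ bound, and Banach--Saks plus non-pluripolarity identify the limit. For general $\mu$ it then argues as follows:
\begin{itemize}
\item For compactly supported $\mu$, write $\mu=f(dd^ch)^n$ with $h\in\mathcal E^0(\Omega)$ (Cegrell's decomposition).
\item Solve $(dd^cu_j)^n=\mu_j:=\min(f,j^n)(dd^ch)^n\le(dd^c(jh))^n$ variationally.
\item Get $(u_j)$ decreasing from Proposition \ref{prop:uniqueness}.
\item Use (2) only as an a priori bound: $e_\phi(u_j)=\int(\phi-u_j)\,d\mu_j\le\int(\phi-u_j)\,d\mu\le C(1+e_\phi(u_j)^{1/2})$.
\item Pass to the decreasing limit via Lemma \ref{lem: bounded energy} and continuity of the Monge--Amp\`ere operator along decreasing sequences.
\item Remove the compact-support assumption by exhausting $\Omega$ with compacts.
\end{itemize}
This approximation-plus-monotonicity step is the missing idea.

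There are also three smaller points.

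First, the H\"older-type bound $C\,e_\phi(u)^{n/(n+1)}e_\phi(v)^{1/(n+1)}$ you invoke for (3) $\Rightarrow$ (2) is not available here, because the mixed terms with $dd^c\phi$ break homogeneity. For $u=tu_0+(1-t)\phi$ one has $e_\phi(u)=O(t^2)$, while $\int(\phi-v)(dd^cu)^n\approx nt\int(\phi-v)\,dd^cu_0\wedge(dd^c\phi)^{n-1}$. So the bound fails for $n\ge 2$ with a universal constant, and for fixed $u$ the paper only obtains exponent $1/2$ at large energy (Lemma \ref{lem: mixed energy}). You do not need it: Lemma \ref{lem: mixed energy inequality} gives $\int(\phi-v)(dd^cu)^n\le 2e_\phi(u)+2e_\phi(v)$, i.e. (3) $\Rightarrow$ (1), and your (1) $\Rightarrow$ (2) then closes the cycle.

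Second, for the same reason, in (1) $\Rightarrow$ (2) the energy of $tv+(1-t)\phi$ scales like $t^2$, not $t^{n+1}$. You must normalize by $t_j=(1+e_\phi(v_j)^{1/2})^{-1}$, and the bound $e_\phi(v_j)\le 4^j$ is neither available nor needed. This is Lemma \ref{lem:quantitative integrabilily}, and it is where the exponent $1/2$ in (2) comes from.

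Third, your uniqueness argument only shows that the integrals $\int d(u-v)\wedge d^c(u-v)\wedge(dd^cu)^k\wedge(dd^cv)^{n-1-k}$ vanish. Deducing $u=v$ from this under the generalized boundary condition needs more. The paper instead uses the comparison principle (Proposition \ref{prop:uniqueness}).
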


Here, $\mathcal M(\Omega)$ consists of all maximal plurisubharmonic functions that are in the Cegrell class $\mathcal E(\Omega)$ (see Definition \ref{def: max psh}). 

The uniqueness part in (3) was known by \cite[Theorem 3.2]{ACCP09},  since $\mathcal E^1_{\phi}(\Omega) \subset \mathcal N_{\phi}(\Omega)$. Observe that the solution provided by the previous theorem satisfies the "boundary condition" $\widetilde u = \phi$. By elementary reason, one sees that the condition $\widetilde{u-\phi}=0$ implies $\tilde{u}=\phi$. The reverse direction is still an open question; we refer to \cite[Theorem 5.1]{ACLR25} for a partial positive answer. 
We also emphasize that, in our main result above, the maximal function $\phi$ can be unbounded from above, while in previous works it was assumed to be negative.

It turns out that when $\phi \equiv 0$, the class $\mathcal E^1_{0}(\Omega)$ coincides with the class $\mathcal E^1(\Omega)$ introduced earlier by Cegrell in \cite{Ceg98}. We also note that the variational approach has previously been used in the local setting to provide alternative proofs of Cegrell's original results; see \cite{ACC12,Lu15}. In those works, however, the boundary data are identically zero, making the problem homogeneous. Our theorem therefore significantly extends the main results of \cite{Ceg98}, \cite{ACC12}.

The following consequence provides many important examples of measures to which our theorem can be applied.

\begin{corollary} \label{coroB} Let $\mu$ be a positive Borel measure on $\Omega$. Assume that there exists 
$v \in \mathcal E_0^1(\Omega)$ such that $\mu \leq (dd^c v)^n$ on $\Omega$. Then for any $\phi \in \mathcal M (\Omega)$ there exists a unique $u \in \mathcal E^1_\phi(\Omega)$ such that $v  + \phi \leq  u \leq \phi$ and
$(dd^c u)^n = \mu$ on $\Omega$.
\end{corollary}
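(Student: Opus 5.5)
The plan is to verify condition (2) of Theorem \ref{thmA} for $\mu$, obtain a solution $u\in\mathcal E^1_\phi(\Omega)$ from (3), and then prove the bounds $v+\phi\le u\le\phi$ by a comparison argument.

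\emph{Non-pluripolarity.} Since $v\in\mathcal E^1_0(\Omega)=\mathcal E^1(\Omega)$, Cegrell's theory shows that $(dd^c v)^n$ puts no mass on pluripolar sets. Because $\mu\le (dd^c v)^n$, the measure $\mu$ is non-pluripolar as well.

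\emph{Integrability estimate.} Take $w\in\mathcal E^1_\phi(\Omega)$. By definition there is $\psi\in\mathcal E^1(\Omega)$ with $\psi+\phi\le w\le\phi$. Hence
$$\int_\Omega(\phi-w)\,d\mu\le\int_\Omega(-\psi)\,(dd^c v)^n<\infty,$$
where finiteness is Cegrell's Hölder-type inequality for $\mathcal E^1$. This already gives condition (1), so Theorem \ref{thmA} produces a unique $u\in\mathcal E^1_\phi(\Omega)$ with $(dd^c u)^n=\mu$. I will not need the quantitative form (2); if one wanted it, one would bound $e_0(\psi)$ by a constant times $e_\phi(w)$ plus a constant, which is a more delicate step.

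\emph{Upper bound.} The inequality $u\le\phi$ holds by the definition of $\mathcal E^1_\phi(\Omega)$.

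\emph{Lower bound, the main obstacle.} I need $v+\phi\le u$. Put $w_0:=v+\phi$. Since $v\in\mathcal E^1$ and $\phi$ is maximal, $w_0\in\mathcal N_\phi(\Omega)$. Expanding the Monge--Ampère measure of the sum in the Cegrell class gives
$$(dd^c w_0)^n\ge (dd^c v)^n\ge\mu=(dd^c u)^n.$$
Both $u$ and $w_0$ lie in $\mathcal N_\phi(\Omega)$ and have the same boundary values in the LMP sense. I would then apply the comparison principle in $\mathcal N_\phi$ (the one from \cite{ACCP09} that gives uniqueness in Theorem \ref{thmA}): $(dd^c w_0)^n\ge (dd^c u)^n$ implies $w_0\le u$. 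If that comparison principle is only available in a weaker form, I would run the argument directly instead. One solves with truncated measures, uses monotone convergence in the variational framework, and checks the inequality at the level of the approximants $\max(w_0,u-\epsilon)$.
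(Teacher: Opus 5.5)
Your argument for existence and uniqueness is the paper's: you verify condition (1) of Theorem \ref{thmA} by dominating $\phi-w$ with the negative of an $\mathcal E^1(\Omega)$ function and applying Cegrell's H\"older inequality, then invoke Theorem \ref{thmA}; the paper uses $P(w-\phi)$ where you use the function from the definition. Your non-pluripolarity check and your comparison step for $v+\phi\le u$ supply details the paper's proof leaves unstated, but there note that $v+\phi$ need not lie in $\mathcal E^1_\phi(\Omega)$ (Example \ref{example: E1}) and $\phi$ may be unbounded above, so instead of quoting \cite{ACCP09} you should observe that the proof of Proposition \ref{prop:uniqueness} goes through verbatim using only $u\in\mathcal E^1_\phi(\Omega)$ and $v+\phi\in\mathcal N^a_\phi(\Omega)$.
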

This result says that if  the Dirichlet problem \eqref{eq:MA2} with boundary value $0$ admits a subsolution, then it admits a solution for any boundary value. As shown in Example \ref{example: finite entropy} any probability measure on $\Omega$ with finite entropy satisfies the above assumption. 
\smallskip

We conclude this introduction with a direct application of our main result to real Monge--Ampère equations. The starting point is a deep connection between complex and real Monge--Ampère equations, discovered in \cite{BB13}, \cite{CGSZ19} and further developed in \cite{LZ25}. As mentioned above, functions in the Cegrell class $\mathcal N_0(\Omega)$ have zero boundary values only in a generalized sense, rather than in the strong sense. However, \cite[Proposition 7.3]{LZ25} shows that, under the additional assumption of $(\mathbb S^1)^n$-symmetry, the boundary limit is in fact zero in the strong sense. Combining this observation with the real--complex correspondence allows us to translate our main result into the framework of real Monge--Amp\`ere equations.

\begin{theorem} \label{thmC} Let $D \Subset \R^n$ be a bounded convex domain, $\varphi$ a  maximal convex function in $D$ and $\nu$ a positive Borel measure on $D$. 

Then the following properties are equivalent:
 \begin{enumerate}
 \item $\mathcal E^1_{\varphi}({D}) \subset \varphi + L^1(D,\nu);$
 \item there exists a constant $C = C(\nu,D) > 0$ such that for any $w \in \mathcal E^1_{\varphi}({D})$,
 $$
 \int_D (\varphi - w) d \nu \leq C \left( 1 +  e_\varphi(w)^{1\slash 2}\right);
 $$
 \item there exists $v \in \mathcal{E}_\varphi^1(D)$ such that ${\rm M}_\R (v) = \nu$.
 \end{enumerate}
\end{theorem}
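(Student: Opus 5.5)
The plan is to deduce Theorem \ref{thmC} from Theorem \ref{thmA} through the logarithmic real--complex correspondence of \cite{BB13,CGSZ19,LZ25}. Set $\Omega_D := \{ z\in\C^n : (\log|z_1|,\dots,\log|z_n|) \in D\}$, which is a bounded hyperconvex Reinhardt tube domain since $D$ is convex and bounded (after a translation we may assume $D\Subset \R^n$ sits where $\Omega_D\Subset\C^n$). To a convex function $w$ on $D$ associate $\hat w(z) := w(\log|z_1|,\dots,\log|z_n|)$. This is an $(\mathbb S^1)^n$-invariant psh function on $\Omega_D$, and every invariant psh function arises this way. Moreover $(dd^c \hat w)^n = c_n\, \mathrm{Log}^* {\rm M}_\R(w)\otimes d\theta$, that is, the pullback of the real Monge--Amp\`ere measure tensored with Haar measure on the torus. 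I will take the definitions of $\mathcal E^1_\varphi(D)$ and $e_\varphi$ used in the paper to be exactly those that make $w\mapsto \hat w$ a bijection $\mathcal E^1_\varphi(D)\to \mathcal E^1_{\hat\varphi}(\Omega_D)\cap\{\text{invariant}\}$ with $e_{\hat\varphi}(\hat w) = c\, e_\varphi(w)$. If they are defined intrinsically instead, I would prove this correspondence first, using \cite[Proposition 7.3]{LZ25} to match the boundary-value conditions. Maximality of $\varphi$ (${\rm M}_\R\varphi=0$) gives $(dd^c\hat\varphi)^n=0$, so $\hat\varphi\in\mathcal M(\Omega_D)$. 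Define $\mu := \hat\nu$, the invariant lift of $\nu$. It is non-pluripolar because it charges no $(\mathbb S^1)^n$-orbit-thin set: pluripolar sets meet the fibres in Haar-null sets.

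The implications then transfer as follows. (2)$\Rightarrow$(1) is immediate, since $e_\varphi(w)<\infty$. For (1)$\Rightarrow$(3), I first show that (1) for $\nu$ implies (1) for $\mu$ on all of $\mathcal E^1_{\hat\varphi}(\Omega_D)$, not just on invariant functions. Given $u\in\mathcal E^1_{\hat\varphi}(\Omega_D)$, average over the torus, $\bar u(z) := \int_{(\mathbb S^1)^n} u(e^{i\theta}z)\,d\theta$. This is invariant and psh, and it lies between $\bar v+\hat\varphi$ and $\hat\varphi$ with $\bar v\in\mathcal E^1$. Fubini gives $\int(\hat\varphi-u)\,d\mu = \int(\hat\varphi-\bar u)\,d\mu$ because $\mu$ and $\hat\varphi$ are invariant. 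It remains to check $\bar u\in\mathcal E^1_{\hat\varphi}$, i.e.\ that $e_{\hat\varphi}(\bar u)<\infty$. I would get this from the convexity/monotonicity of the full energy $E_{\hat\varphi}$ established via Theorem \ref{thm: integration by parts}: $\bar u$ is a limit of convex combinations of rotations $u\circ R_\theta$, each with the same energy, and concavity plus upper semicontinuity of $E_\phi$ gives $E(\bar u)\ge E(u)>-\infty$.

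With (1) for $\mu$ established, Theorem \ref{thmA} yields a unique $u\in\mathcal E^1_{\hat\varphi}(\Omega_D)$ with $(dd^c u)^n=\mu$. By uniqueness and invariance of the data, $u\circ R_\theta$ is also a solution, so $u$ is invariant. Hence $u=\hat v$ with $v\in\mathcal E^1_\varphi(D)$ and ${\rm M}_\R v=\nu$, after normalising constants. For (3)$\Rightarrow$(2), lift $v$ to $\hat v$, which solves the complex problem for $\mu$. Theorem \ref{thmA} (3)$\Rightarrow$(2) then gives $\int(\hat\varphi-\hat w)\,d\mu\le C(1+e_{\hat\varphi}(\hat w)^{1/2})$ for every invariant $\hat w$, and this translates back to the real inequality.

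The main obstacle is the averaging step: showing that $\bar u$ stays in $\mathcal E^1_{\hat\varphi}$ with controlled energy. If the concavity argument is delicate, I would instead use the quantitative estimate directly. By the Cauchy--Schwarz-type inequalities behind Theorem \ref{thm: integration by parts}, $e_{\hat\varphi}$ of a convex combination is bounded by a constant times the maximum energy of its pieces, and this bound passes to decreasing limits by the monotone convergence properties of the energy.
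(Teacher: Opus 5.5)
Your proposal is correct and is essentially the paper's argument. It coincides with the paper's second proof of $(2)\Rightarrow(3)$: torification $u\mapsto u^{\tau}$ with $E_\phi(u^{\tau})\le E_\phi(u)$ (Lemma \ref{lem: torification in E1}), Fubini against the toric lift $\hat\nu$, Theorem \ref{thmA}, invariance of the solution by uniqueness, and Proposition \ref{prop: energy MAR-MAC}; your $(3)\Rightarrow(2)$ simply restricts Theorem \ref{thmA} to invariant test functions, which is how the paper's ``exactly as in Theorem \ref{thmA}'' can be read. One correction on sign conventions: in the paper $E_\phi\ge 0$ is convex, non-increasing and lower semicontinuous, so your averaging step should read $E_\phi(u^{\tau})\le E_\phi(u)<+\infty$; your ``concavity/upper semicontinuity'' formulation is the same statement for $-E_\phi$.
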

Here ${\rm M}_{\mathbb R}$ is the real Monge-Amp\`ere operator as discovered by Alexandrov; see \cite{Fig17}, \cite{AZ67}. By analogy with the complex setting, $\mathcal{E}_\varphi^1(D)$ is the set of convex functions in $D$ with boundary value $\varphi$ in {\it the strong sense} that $\lim_{z\to \partial D} (u-\varphi)(z) =0$, and having finite energy $e_\varphi(u) := \int_D (\varphi-u) {\rm M}_\R(u) < + \infty$.  

When $\nu$ has finite total mass, $\nu(D)<+\infty$, and $\phi$ is continuous on $\overline D$, the result is classical in the literature on real Monge--Amp\`ere equations; see \cite[Theorem 1.1]{Har06}, \cite[Theorem 4.1]{RT77}, \cite[Theorem 2.13]{Fig17}, and \cite[Theorem 1.6.2]{Gut16}. We will show that Theorem \ref{thmC} is a reformulation of Theorem \ref{thmA} in the real setting.  Alternatively, one may adapt the variational method directly to the real setting; we refer to the recent work \cite{Le26}, where such an approach is developed, inspired by \cite{LZ25}. We stress, however, that in our theorem the boundary datum $\phi$ is not assumed to be bounded from above. In particular, $\phi$ may attain the value $+\infty$ at certain boundary points. 

\begin{org}
We recall the necessary background on Cegrell's classes in Section \ref{sect: preliminaries}. In Section \ref{sect: new energy class}, we introduce and investigate a new finite energy class, establishing the monotonicity and convexity of the associated energy functional through a key integration by parts formula. In Section \ref{sect: variational approach}, we adapt the variational framework of \cite{BBGZ13} to our setting. This framework is then used in Section \ref{sect: proof of the main results} to prove the main results of the paper.
\end{org}

\begin{ackn}
This work is partially supported by Institut Universitaire de France
and the KRIS project of fondation Charles Defforey. C.H.Lu is partially supported by the Centre Henri Lebesgue ANR-11-LABX-0020-01. There is no use of AI in our work. 
\end{ackn}

\section{Preliminaries}\label{sect: preliminaries}
In the whole paper $\Omega$ will be a fixed bounded hyperconvex domain in $\mathbb C^n$ i.e. it admits a continuous plurisubharmonic exhaustion $\rho : \Omega \longrightarrow ]-\infty, 0[$ which can be always assumed to be strictly plurisubharmonic in $\Omega$ and $-1 \leq \rho <0$ in $\Omega$ (see \cite[Proposition 1.2]{KR81}).

\subsection{The complex Monge-Ampère operator} \label{sect: MA operator}
\subsubsection{The Bedford-Taylor theory} 
In their seminal papers \cite{BT76, BT82}, E. Bedford and B.A. Taylor were able to define the complex Monge-Ampère operator on the positive  convex cone ${\rm PSH}(\Omega) \cap L^\infty(\Omega)$ of bounded plurisubharmonic functions and 
proved its main properties : among them a general comparison principle and the continuity under monotone convergence of uniformly bounded sequences of plurisubharmonic functions (see \cite{GZbook}).

\subsubsection{The Cegrell Theory}\label{subsect: Cegrell theory}
  Later U. Cegrell \cite{Ceg98} was able to extend the complex Monge-Ampère operator to some classes of singular plurisubharmonic functions in  $\Omega \Subset \C^n$.
  
  Following \cite{Ceg98}, we define the set of "pluripotential test functions" as
 $$
\mathcal{E}^0(\Omega)=\{u \in {\rm PSH}(\Omega)\cap L^\infty(\Omega): \,u=0 ~~ \text{on}~~\partial \Omega~~ \text{and} ~\int_{\Omega}(dd^cu)^n<+\infty \}.
$$
Here the notation $u=0 ~~ \text{on}~~\partial \Omega$ means that for any $\zeta \in \partial \Omega,$
$\lim_{z \to \zeta} u(z) = 0$.

Let $\mathcal{F}(\Omega)$ be the class of negative plurisubharmonic functions $u \in {\rm PSH}(\Omega)^-$ such that there exists a non-increasing  sequence $(u_j) \in \mathcal E^0(\Omega)$ which converges to $u$ and satisfies 
$$
\sup_j \int_\Omega (dd^c u_j)^n < + \infty.
$$
Then Cegrell defined $\mathcal{E}(\Omega)$ as the class of functions $u \in {\rm PSH}(\Omega)$ such that for any open set $ \Omega' \Subset \Omega$ there exists $v \in \mathcal F(\Omega)$ such that $u = v$ in $\Omega'$. 
He proved that the complex Monge-Ampère operator is well defind on $\mathcal{E}(\Omega)$ and  is continuous under monotone convergence of sequences in $\mathcal{E}(\Omega)$. 
As part of the definition, Cegrell only considered negative plurisubharmonic functions. 

If $u$ is psh and bounded from above in $\Omega$, then we say that $u\in \mathcal E(\Omega)$ if $u-\sup_{\Omega} u$ belongs to this class in the sense of Cegrell. As shown by B{\l}ocki, the class $\mathcal E(\Omega)$ is of local nature, i.e. a function $u$ belongs to $\mathcal E(\Omega)$ if and only if its restriction on each open subset $\Omega'\subset \Omega$ belongs to $\mathcal E(\Omega')$. In this paper we will also consider psh functions that are not bounded from above. We thus extend the definition of Cegrell so as to cover these functions. 

\begin{definition}
	Let $u$ be a psh function in $\Omega$. We say that $u\in \mathcal E(\Omega)$ if for each relatively compact open subset $\Omega'\Subset \Omega$, $u\in \mathcal E(\Omega')$. 
\end{definition} 
The definition makes sense because on compact sets $u$ is bounded from above by upper semicontinuity.

We let $\mathcal E^a(\Omega)$ denote the set of $u\in \mathcal E(\Omega)$ such that $(dd^c u)^n$ vanishes on pluripolar sets.

The class $\mathcal E^1(\Omega)$ consists of negative psh functions $u$ realized as the limit of a decreasing sequence $(u_j)$ in $\mathcal E^0(\Omega)$ with uniformly bounded energy i.e. $\sup_j \int_{\Omega}(-u_j)(dd^c u_j)^n <+\infty$.

The following fundamental version of the maximum principle was proved in \cite[Theorem 2.2]{BGZ09} (see also \cite{NP09}): 
\begin{theorem} \label{thm:principe du max} Let $u, v \in \mathcal E(\Omega)$. Then
$$
{\bf 1}_{\{u> v\}} (dd^c \sup\{u,v\})^n = {\bf 1}_{\{u> v\}} (dd^c u)^n,
$$
in the sense of Borel measures on $\Omega$.
Moreover, if $u,v\in \mathcal E^a(\Omega)$ then
$$
(dd^c \sup\{u,v\})^n \geq {\bf 1}_{\{u> v\}} (dd^c u)^n + {\bf 1}_{\{u\leq v\}} (dd^c v)^n.
$$
In particular if $v \leq u$ in $\Omega$ and $u,v\in \mathcal E^a(\Omega)$ then 
$$
{\bf 1}_{\{u= v\}} (dd^c v)^n \leq {\bf 1}_{\{u= v\}} (dd^c u)^n.
$$
\end{theorem}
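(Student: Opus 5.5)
The plan is to reduce to the bounded case, where Bedford--Taylor theory applies, and then pass to limits using the monotone continuity of the Monge--Ampère operator on $\mathcal E(\Omega)$. Fix $u,v\in\mathcal E(\Omega)$. Since both identities are local, we may work on a relatively compact open subset and assume, by the definition of $\mathcal E$ and Błocki's locality result, that $u,v\in\mathcal F(\Omega)$. For $j\in\N$ set $u_j:=\max(u,-j)$ and $v_j:=\max(v,-j)$. These are bounded psh functions which decrease to $u,v$, and $\max(u_j,v_j)$ decreases to $\max(u,v)$.

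For the first identity, we use the bounded case. For bounded psh functions, Bedford--Taylor's plurifine locality says that on the plurifine open set $\{u_j>v_j\}$ the measures $(dd^c\max(u_j,v_j))^n$ and $(dd^c u_j)^n$ coincide. To pass to the limit we want an open set, so we replace $\{u>v\}$ by $\{u>v+\varepsilon\}\cap\{v>-k\}$ type sets. The cleanest route is the standard trick of testing against $\chi\,h$, where $h$ is a bounded quasicontinuous function of the form $\max(u-v,0)$, after truncation. Concretely, we prove that for every $\varepsilon>0$ and $k$,
$$
{\bf 1}_{\{u>v+\varepsilon\}\cap\{u>-k\}}(dd^c\max(u,v))^n={\bf 1}_{\{u>v+\varepsilon\}\cap\{u>-k\}}(dd^c u)^n .
$$
On $\{u>-k\}$ the truncations satisfy $\max(u,-k)=u$. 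Plurifine locality for functions in $\mathcal E$, namely $(dd^c\max(u,-k))^n=(dd^c u)^n$ on $\{u>-k\}$, follows from the known convergence ${\bf 1}_{\{u>-k\}}(dd^c\max(u,-k))^n\to (dd^c u)^n$ on non-pluripolar sets. This reduces the claim to the bounded case. Letting $\varepsilon\to0$ and $k\to\infty$ gives the identity on $\{u>v\}\setminus\{u=-\infty\}$. The point $\{u=-\infty\}$ does not lie in $\{u>v\}$, so nothing is lost.

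\textbf{Main obstacle.} The hard step is justifying plurifine locality for unbounded functions in $\mathcal E$. We would handle it via the convergence of $(dd^c\max(u,-j))^n$ in the weak sense against quasicontinuous bounded test functions, which holds in $\mathcal F$ by Cegrell's theory.

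For the second inequality, assume $u,v\in\mathcal E^a$. By the first part, $(dd^c\max(u,v))^n\ge {\bf 1}_{\{u>v\}}(dd^c u)^n+{\bf 1}_{\{v>u\}}(dd^c v)^n$, so it remains to control the contact set $\{u=v\}$. Apply the first part to $u$ and $v+\varepsilon$ for all but countably many $\varepsilon>0$, choosing $\varepsilon$ so that $(dd^cv)^n(\{u=v+\varepsilon\})=0$. On $\{u< v+\varepsilon\}$, which contains $\{u\le v\}$ after we drop the pluripolar set $\{v=-\infty\}$, we obtain $(dd^c\max(u,v+\varepsilon))^n\ge{\bf 1}_{\{u<v+\varepsilon\}}(dd^cv)^n\ge{\bf 1}_{\{u\le v\}}(dd^c v)^n$. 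Since $(dd^cv)^n$ charges no pluripolar set, removing $\{v=-\infty\}$ costs nothing. On $\{u>v+\varepsilon\}$ we get $(dd^c u)^n$. Letting $\varepsilon\downarrow0$, $\max(u,v+\varepsilon)$ decreases to $\max(u,v)$. By monotone continuity and lower semicontinuity of the mass on open sets, and approximating Borel sets suitably, the inequality passes to the limit.

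Finally, the "in particular" statement follows by applying the second inequality with $\max(u,v)=u$ and restricting both sides to $\{u=v\}$.
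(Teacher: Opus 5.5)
The paper contains no proof of this statement. It is quoted from \cite[Theorem 2.2]{BGZ09} (see also \cite{NP09}), so there is no internal argument to compare with. Your outline is the standard truncation proof, and it is sound.

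For the identity, set $E_k:=\{u>v\}\cap\{u>-k\}$. Then $E_k\subset\{\max(u,-k)>\max(v,-k)\}$, so Bedford--Taylor plurifine locality gives ${\bf 1}_{E_k}(dd^c\max(u,v,-k))^n={\bf 1}_{E_k}(dd^c\max(u,-k))^n$. The locality ${\bf 1}_{\{w>-k\}}(dd^c w)^n={\bf 1}_{\{w>-k\}}(dd^c \max(w,-k))^n$ for $w\in\mathcal E$ transfers this to $u$ and $\max(u,v)$. You need it for both $w=u$ and $w=\max(u,v)$; note $E_k\subset\{\max(u,v)>-k\}$. Finally $E_k\nearrow\{u>v\}$, and the $\varepsilon$ is not needed here. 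For the inequality, comparing with $\max(u,v+\varepsilon)$ and discarding the $(dd^c v)^n$-null set $\{v=-\infty\}$ is exactly where $\mathcal E^a$ enters. The last claim then follows as you say.

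Two points should be corrected.

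First, the fallback in your ``main obstacle'' paragraph is false. $(dd^c\max(u,-j))^n$ need not converge to $(dd^c u)^n$ against bounded quasi-continuous weights when $(dd^c u)^n$ charges pluripolar sets. Take $u=\log|z|\in\mathcal F(\mathbb B)$, so $(dd^c\max(u,-j))^n$ is the normalized measure on $\{|z|=e^{-j}\}$. Let $f=\chi\,\sin\log(-\log|z|)$, with $\chi\in C_c(\{|z|<1/e\})$ equal to $1$ near $0$. This $f$ is bounded and continuous off $\{0\}$, hence quasi-continuous. Yet $\int f\,(dd^c\max(u,-j))^n=\sin\log j$ for large $j$, which does not converge. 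This is why Proposition \ref{prop: convergence MA quasi-cont} is stated in $\mathcal E^a$.

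The locality identity for $w\in\mathcal E$ is precisely the case $v\equiv -k$ of the statement itself. So it must be imported from Cegrell's theory, or derived from the known fact that $\mu_k:={\bf 1}_{\{w>-k\}}(dd^c\max(w,-k))^n$ increases to ${\bf 1}_{\{w>-\infty\}}(dd^c w)^n$. By Bedford--Taylor, ${\bf 1}_{\{w>-k\}}\mu_l=\mu_k$ for $l\geq k$, hence ${\bf 1}_{\{w>-k\}}(dd^c w)^n=\mu_k$.

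Second, in the limit $\varepsilon\downarrow 0$, lower semicontinuity of mass on open sets points the wrong way. Instead fix $\varepsilon_0>0$. For $\varepsilon<\varepsilon_0$ you have $(dd^c\max(u,v+\varepsilon))^n\geq \nu_{\varepsilon_0}:={\bf 1}_{\{u>v+\varepsilon_0\}}(dd^c u)^n+{\bf 1}_{\{u\leq v\}}(dd^c v)^n$. A weak limit of positive measures is positive, so $(dd^c\max(u,v))^n\geq \nu_{\varepsilon_0}$. Then let $\varepsilon_0\downarrow 0$ by monotone convergence. The restriction to those $\varepsilon$ with $(dd^c v)^n(\{u=v+\varepsilon\})=0$ is unnecessary.
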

\subsubsection{Least maximal plurisubharmonic majorant}
The concept of least harmonic majorant is classical in Potential Theory (see \cite{Ran95}).

The corresponding concept, called the least maximal plurisubharmonic  (LMP) majorant, was introduced in Pluripotential Theory by Cegrell (\cite{Ceg04, Ceg08}).

A plurisubharmonic function $\phi : \Omega  \longrightarrow \R$ is said to be maximal in $\Omega$ if for any subdomain $D \Subset \Omega$, any plurisubharmonic function $v \in {\rm PSH}(D)$ such that $v \leq \phi$ in $\partial D$ satisfies the inequality $v \leq \phi$ in $D$.

Let $u \in {\rm PSH} (\Omega)$ and assume that $u $ has a maximal plurisubharmonic majorant $\phi$ in $\Omega$ i.e. $u \leq \phi$. 
Let $(\Omega_j)_{j \in \N}$ a fundamental sequence of strictly pseudoconvex domains with smooth boundary in $\Omega$. For each $j \in \N$ define
$$
\widetilde u^j := \sup \{ v \in {\rm PSH}(\Omega) ; \, v \leq u \, \, \text{in} \, \, \Omega \setminus \Omega_j\}.
$$
For any $j \in \N$, $\widetilde u^j  \in {\rm PSH}(\Omega)$ and  $ u \leq \widetilde u^j  \leq \widetilde u^{j+1} \leq \phi$ in $\Omega$. Then $(\widetilde u^j)_{j \in \N}$ is a non decreasing sequence of plurisubharmonic function in $\Omega$. Let $\widetilde u := (\lim_{j \to + \infty} \widetilde u^j)^*$ the upper semi-continuous regularization of its limit. Then $\widetilde u$ is plurisubharmonic and $\widetilde u \leq \phi$ in $\Omega$. Moreover since $\widetilde u^j $ is maximal in $\Omega_j$, it follows that $\widetilde u$ is a maximal plurisubharmonic majorant of $u$ in $\Omega$ which satisfies the inequality $\widetilde u \leq \phi$ in $\Omega$. 
 This means that $\widetilde u$ is the least maximal plurisubharmonic majorant (LMP) majorant of $u$ in $\Omega$.  Moreover, as shown in \cite{Ceg08}, when $u \in  \mathcal E(\Omega)$ then $ \widetilde u \in  \mathcal E(\Omega)$ and $(dd^c \widetilde u)^n = 0$ in $\Omega$.
 \begin{definition}
 	We define  $\mathcal N_0 (\Omega)$ as the set of $u \in \mathcal E(\Omega)$ such that $\widetilde u \equiv 0$.
 \end{definition}
 The condition $\tilde{u}=0$ means that  $u$ has zero boundary values in a generalized sense. By \cite{Ceg08}, \cite{Ceg98}, $\mathcal{E}^1(\Omega) \subset \mathcal N_0(\Omega)$, and $\mathcal F(\Omega) \subset \mathcal N_0(\Omega)$. This is because the Monge-Amp\`ere measure of $\tilde{u}$ is equal to $0$, and by uniqueness $\tilde{u}=0$.   

\begin{definition}\label{def: max psh}
We let $\mathcal M(\Omega)$ denote the set of functions $u \in \mathcal E(\Omega)$ such that $(dd^c u)^n = 0$ in $\Omega$.	
\end{definition}

%Observe that if  $u$ can be extended as a continuous function at the boundary of $\Omega$ then $\widetilde  u = U_{\Omega,\varphi}$ is the Perron-Bremermann envelope of $\varphi := u_{\mid \partial \Omega}$ and $\widetilde  u = f$ in $\partial \Omega$. 

%When $\varphi \in C^0(\partial \Omega)$ extends as a plurisubhramonic function in $\Omega$, then $u \in \mathcal E(\Omega)$ is a solution to the Dirichlet problem \eqref{eq:MA} if and only if $(dd^c u)^n = \mu$ and  $\widetilde u = U_{\Omega,\varphi}$, where $U_{\Omega,\varphi}$ is the Perron-Bremermann envelope of $\varphi$. This is the case when $\Omega$ is a strictly pseudoconvex domain, as shown in \cite{BT76}.

Given a maximal plurisubharmonic function $\phi \in \mathcal M(\Omega)$, we define 
$\mathcal N_\phi (\Omega)$ to be the class of $u \in \mathcal E(\Omega)$ such that there exists $ v \in \mathcal N_0(\Omega)$ such that $v + \phi \leq u \leq \phi$ in $\Omega$. This means somehow that the LMP majorant of $u-\phi$ is $0$ in the sense that $P(u-\phi) \in \mathcal N(\Omega)$. Observe that it implies that the LMP majorant of $u$ coincides with $\phi$ i.e. $\widetilde u \equiv \phi$. 

We let $\mathcal N^a_{\phi}(\Omega)$ denote the set of $u\in \mathcal N_{\phi}(\Omega)$ such that $(dd^c u)^n$ vanishes on pluripolar sets. 

\subsubsection{Stokes formula}
\begin{lemma} \label{lem: conservation mass}
 Let $u,  v, w_1, \cdots, w_{n-p} \in \mathcal{E}(\Omega)$. Assume that $\Omega' \Subset \Omega'' \Subset \Omega$ are two bounded open sets such that $u =v$ in $\Omega \setminus \Omega'$. 
 Then 
 $$
  \int_{\Omega''} (dd^c u)^{p} \wedge T = \int_{\Omega''} (dd^c v)^{p} \wedge T,
 $$
 where  $T= dd^cw_1\wedge \cdots \wedge dd^cw_{n-p}$ if $1 \leq p \leq n-1$ and $T= {\bf 1}_{\Omega}$ when $p = n$.
 \end{lemma}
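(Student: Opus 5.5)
The plan is to reduce first to the case $p=n$ and then handle the mixed case in the same way; the key tool is a cutoff argument with a test function equal to $1$ near $\overline{\Omega'}$. Fix an open set $U$ with $\Omega'\Subset U\Subset\Omega''$. Choose $\chi\in C_c^\infty(\Omega'')$ with $0\le\chi\le 1$ and $\chi\equiv 1$ on a neighbourhood of $\overline U$. We split
$$\int_{\Omega''}(dd^c u)^p\wedge T=\int_{\Omega''}\chi\,(dd^c u)^p\wedge T+\int_{\Omega''}(1-\chi)\,(dd^c u)^p\wedge T.$$

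The second term is easy. On $\Omega\setminus\overline{\Omega'}$ we have $u=v$, and this is an open set. The Monge--Ampère type currents $(dd^c u)^p\wedge T$ are defined locally in Cegrell's class $\mathcal E$, via the locality of $\mathcal E$ noted in Subsection \ref{subsect: Cegrell theory}. So they coincide there: $(dd^c u)^p\wedge T=(dd^c v)^p\wedge T$ on $\Omega\setminus\overline{\Omega'}$. Since $1-\chi$ vanishes near $\overline{\Omega'}$, the second terms for $u$ and $v$ agree.

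For the first term, write $(dd^c u)^p\wedge T=dd^c u\wedge S_u$ with $S_u=(dd^c u)^{p-1}\wedge T$ (when $p=n$, $T=1$). We want to integrate by parts twice, $\int\chi\, dd^c u\wedge S_u=\int u\, dd^c\chi\wedge S_u$. The support of $dd^c\chi$ lies in $\Omega''\setminus \overline U\subset \Omega\setminus\overline{\Omega'}$, where $u=v$ and $S_u=S_v$. This gives
$$\int\chi\,(dd^c u)^p\wedge T=\int\chi\,(dd^c v)^p\wedge T,$$
and summing the two pieces proves the lemma.

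The main obstacle is justifying this integration by parts for unbounded functions in $\mathcal E$, where $u\,dd^c\chi\wedge S_u$ need not obviously be well defined. I would handle it by approximation. Locally on a neighbourhood of $\overline{\Omega''}$, $u,v,w_i$ agree with functions in $\mathcal F$. Take decreasing sequences in $\mathcal E^0$, for instance of the form $u_j=\max(u,-j)$ modified, or better the canonical approximants, and arrange $u_j=v_j$ outside $\Omega'$. Concretely, set $u_j=\max(u,-j)$ and $v_j=\max(v,-j)$: these are bounded, still coincide off $\Omega'$, and decrease to $u$ and $v$.

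For bounded psh functions, Bedford--Taylor theory makes the Stokes argument rigorous. For $\max(\cdot,-j)$ of the $w_i$ too, it gives
$$\int\chi\,(dd^c u_j)^p\wedge T_j=\int\chi\,(dd^c v_j)^p\wedge T_j,$$
with the complement term handled as above. Cegrell's continuity of the operator along decreasing sequences in $\mathcal E$ gives weak convergence of the measures. This passes to the limit in $\int\chi\,(\cdot)$ since $\chi$ is continuous with compact support. Weak convergence only controls integrals of such compactly supported test functions, which is why the splitting with $\chi$ (rather than integrating ${\bf 1}_{\Omega''}$ directly) is used. Combining with the exact equality of the measures on $\Omega\setminus\overline{\Omega'}$ for the $(1-\chi)$ part finishes the proof.
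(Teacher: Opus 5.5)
Your proof is correct. It uses the same ingredients as the paper: truncation by $\max(\cdot,-j)$, Stokes for bounded functions, and Cegrell's continuity along decreasing sequences. Where you differ is in how you pass from weak convergence to the integral over the open set $\Omega''$.

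\textbf{The paper's route.} It applies the bounded Stokes identity on intermediate open sets $\Omega_j$ with $\Omega'\Subset\Omega_j\Subset\Omega''$. The portmanteau inequalities (lower semicontinuity on open sets, upper semicontinuity on compact sets) then give only the one-sided bound $\int_{\Omega_j}(dd^cv)^p\wedge T\le\int_{\overline{\Omega_j}}(dd^cu)^p\wedge T$. The paper exhausts $\Omega''$ by the $\Omega_j$ and recovers equality by swapping $u$ and $v$.

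\textbf{Your route.} You split ${\bf 1}_{\Omega''}=\chi+(1-\chi){\bf 1}_{\Omega''}$. Weak convergence is then tested only against the fixed $\chi\in C_c^\infty(\Omega'')$. The remainder is handled by exact equality of the measures on $\Omega\setminus\overline{\Omega'}$, which gives the equality in one stroke. You also make explicit the bounded Stokes identity that the paper simply asserts: you integrate by parts against $dd^c\chi$, whose support is a compact subset of $\Omega\setminus\overline{\Omega'}$.

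\textbf{What this costs.} You rely on locality of the Cegrell operator on open sets for unbounded functions. This is not the same as B{\l}ocki's locality of the \emph{class} $\mathcal E$ that you cite. It does follow from your own approximation: $\max(u,-j)=\max(v,-j)$ on $\Omega\setminus\overline{\Omega'}$, so the Bedford--Taylor measures agree there, and weak convergence on that open set carries the equality to the limits.

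A small clarification: $\max(u,-j)$ lies in $\mathcal E$ (it is locally bounded), not in $\mathcal E^0$. That is all Cegrell's monotone continuity theorem requires, so your vague reference to ``canonical approximants'' is unnecessary.
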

\begin{proof}
 Let $u_t = \max(u,-t)$, $w_{j,t} = \max(w_j,-t)$ and $v_t=  \max(v,-t)$ for $t > 0$. Since $\Omega' \Subset \Omega'' \Subset \Omega$, there exists a sequence of bounded open sets $(\Omega_j)$ such that for any $j \in \N$, $\Omega' \Subset \Omega_j \Subset \Omega''$ and $\Omega_j \nearrow \Omega''$. 
 Then since $u_t = v_t$ in $\Omega \setminus \Omega'$ by Stokes formula for bounded functions, we have 
 $$
 \int_{\Omega_j} (dd^c u_t)^{p} \wedge T_t = \int_{\Omega_j} (dd^c v_t)^{p} \wedge T_t.$$
 On the other hand, we have
 $$
 \liminf_{t \rightarrow + \infty} \int_{\Omega_j} (dd^c v_t)^{p} \wedge T_t \leqslant \limsup_{t \rightarrow + \infty} \int_{\overline{\Omega_j}} (dd^c u_t)^{p} \wedge T_t.
 $$
Therefore, 
	$$ \int_{\Omega_j} (dd^c v)^{p} \wedge T \leqslant  \int_{\overline{\Omega_j}} (dd^c u)^{p} \wedge T.$$
	Taking the limit on $j$, we obtain
 \begin{equation}\label{E1}
 \int_{\Omega''} (dd^c u)^{p} \wedge T \leqslant \int_{\Omega''} (dd^c v)^{p} \wedge T.
 \end{equation}
 Permuting $u$ and $v$, we obtain the  equality.
\end{proof}
\subsubsection{Capacity estimates}
Given a class $\mathcal K \subset \mathcal E(\Omega)$, we denote by $\mathcal K^a$ the set of functions $u \in \mathcal K$ such that $(dd^c u)^n$ puts no mass on pluripolar sets. 

\begin{prop}\label{prop: comparison of capacities}
 Let $\psi\in \mathcal F^a(\Omega)$. Then there exists a continuous function $f : \R^+ \longrightarrow  \R^+$ with $f(0)=0$ such that, for all Borel sets $E$, 
 \[
 Cap_{\psi}(E) \leq f(Cap_{BT}(E)).
 \]
\end{prop}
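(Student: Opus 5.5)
\emph{Convention.} I take $Cap_\psi(E)=\sup\{\int_E (dd^c u)^n : u\in {\rm PSH}(\Omega),\ \psi\le u\le 0\}$, and $Cap_{BT}(E)=\sup\{\int_E (dd^c w)^n : w\in{\rm PSH}(\Omega),\ -1\le w\le 0\}$. Any such $u$ lies in $\mathcal F(\Omega)$, since $\psi\le u\le 0$ with $\psi\in\mathcal F(\Omega)$.

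\emph{Approach.} Fix $u$ with $\psi\le u\le 0$ and $t>0$, and split $E$ along the level set $\{u>-t\}$:
$$
\int_E (dd^c u)^n\le \int_{E\cap\{u>-t\}}(dd^c u)^n+\int_{\{u\le -t\}}(dd^c u)^n .
$$
The aim is to bound the first term by $t^nCap_{BT}(E)$ and the second by a tail $g(t)$ depending only on $\psi$. Both bounds must be uniform in $u$.

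\emph{First term.} Put $u_t=\max(u,-t)$. Theorem \ref{thm:principe du max}, applied to $u$ and the constant $-t$, gives ${\bf 1}_{\{u>-t\}}(dd^c u)^n={\bf 1}_{\{u>-t\}}(dd^c u_t)^n$. Moreover $u_t/t$ is psh with $-1\le u_t/t\le 0$, so
$$
\int_{E\cap\{u>-t\}}(dd^c u)^n\le \int_E (dd^c u_t)^n\le t^n\, Cap_{BT}(E).
$$

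\emph{Second term, the main obstacle.} I need the Cegrell-type tail inequality: for $\psi\le u\le 0$ in $\mathcal F(\Omega)$,
$$
\int_{\{u\le -t\}}(dd^c u)^n\le \int_{\{\psi\le -t\}}(dd^c\psi)^n=:g(t).
$$
I would first prove it for $u,\psi\in\mathcal E^0(\Omega)$ and then pass to $\mathcal F$ along decreasing sequences, using the continuity of the Monge--Ampère operator under monotone convergence. For the $\mathcal E^0$ case:
\begin{enumerate}
\item By Lemma \ref{lem: conservation mass} (Stokes), the total mass of $(dd^c u)^n$ equals that of $(dd^c \max(u,-s))^n$ for $-s$ slightly below $-t$ near the boundary.
\item Write the tail as a difference of total masses: $\int_{\{u\le -t\}}(dd^c u)^n=\int_\Omega(dd^c u)^n-\int_{\{u>-t\}}(dd^c u_t)^n$.
\item Compare the right side with the analogous expression for $\psi$. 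Here one uses $\int_\Omega (dd^c u)^n\le\int_\Omega(dd^c\psi)^n$ (Cegrell's comparison of masses for $\psi\le u$), together with the last inequality of Theorem \ref{thm:principe du max} applied on the level sets.
\end{enumerate}
If this direct route stalls, the fallback is to replace $t$ by $2t$ on the right-hand side. Then use $\{u<-2t\}\subset\{\psi<-2t\}\subset\{\psi<\tfrac12 u - t\}$ and the standard comparison principle in $\mathcal F$. This gives the weaker but sufficient bound $\int_{\{u<-2t\}}(dd^c u)^n\le 2^n\int_{\{\psi<-t\}}(dd^c\psi)^n$.

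\emph{Conclusion.} Because $\psi\in\mathcal F^a(\Omega)$, the measure $(dd^c\psi)^n$ has finite total mass and puts no mass on the pluripolar set $\{\psi=-\infty\}$. Hence $g(t)\to 0$ as $t\to\infty$ by dominated convergence. Define
$$
f(x)=\inf_{t>0}\bigl(t^n x+ C_n g(t)\bigr),
$$
with $C_n=1$ or $2^n$ depending on which tail bound was obtained. Taking the supremum over $u$ yields $Cap_\psi(E)\le f(Cap_{BT}(E))$. As an infimum of affine functions, $f$ is concave, nondecreasing, finite and nonnegative, hence continuous on $(0,\infty)$. Finally $f(0)=\inf_t C_n g(t)=0$, and $f(x)\le t^n x+C_n g(t)$ for every fixed $t$, so $f$ is also continuous at $0$.
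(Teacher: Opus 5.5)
The inequality your ``direct route'' aims at,
\[
\int_{\{u\le -t\}}(dd^c u)^n\le \int_{\{\psi\le -t\}}(dd^c\psi)^n ,
\]
is false. So step (3) cannot be carried out, and the option $C_n=1$ should be dropped. Radial functions in the unit ball $\mathbb B^n$ already show this. For $0<\epsilon<1$ let
\[
\psi=\max\bigl(\log\|z\|,\,-1+\epsilon(1+\log\|z\|),\,-1-2\epsilon\bigr),\qquad u=\max\bigl(\psi,\,\tfrac{1+\epsilon}{2}\log\|z\|\bigr).
\]
Both lie in $\mathcal E^0(\mathbb B^n)$ and $\psi\le u\le 0$. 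For $t=1+\epsilon$ both sublevel sets equal the closed ball $\{\|z\|\le e^{-2}\}$. This ball has $(dd^c\psi)^n$-mass $\epsilon^n$ but $(dd^c u)^n$-mass $\bigl(\tfrac{1+\epsilon}{2}\bigr)^n$. The issue is that $\psi$ may be almost flat at level $-t$ while the larger function $u$ has a corner there.

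Your fallback, by contrast, is correct and is all you need. We have $\{u<-2t\}\subset\{\psi<u/2-t\}\subset\{\psi<-t\}$. Apply Theorem \ref{comparaison principe in N} with $\phi\equiv 0$ to $\psi$ and $u/2$; both lie in $\mathcal N^a_0(\Omega)$ because $u\ge\psi\in\mathcal F^a(\Omega)$. This gives $\int_{\{u<-2t\}}(dd^cu)^n\le 2^n\int_{\{\psi<-t\}}(dd^c\psi)^n$. Split $E$ at level $-2t$ instead of $-t$; for the closed set $\{u\le -2t\}$, apply the bound with $t'<t$ and let $t'\uparrow t$. You then get $\int_E(dd^cu)^n\le 2^n\bigl(t^n Cap_{BT}(E)+g(t)\bigr)$. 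Put $f(x)=2^n\inf_{t>0}\bigl(t^nx+g(t)\bigr)$; your concavity argument for continuity and $f(0)=0$ goes through unchanged.

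With this repair your proof is complete, and it handles the tail differently from the paper. The paper chooses a concave increasing $h$ with $h(+\infty)=+\infty$ and $\int_\Omega h(-2\psi)(dd^c\psi)^n<+\infty$. This is possible exactly because $(dd^c\psi)^n$ is finite and does not charge $\{\psi=-\infty\}$. It bounds the part below level $-t$ by Chebyshev, $h(t)^{-1}\int_\Omega h(-u)(dd^cu)^n$. It then controls this uniformly in $u$ by $2^n\int_\Omega h(-2\psi)(dd^c\psi)^n$, the case $\phi\equiv0$, $p=n$ of Lemma \ref{comparaison principe boundary value}. Choosing $t^{2n}Cap_{BT}(E)=1$ gives the explicit $f(x)=C/h(x^{-1/2n})+\sqrt{x}$.

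That weighted inequality is just your level-set inequality integrated against $dh(t)$. Your route therefore drops the auxiliary Orlicz weight and exhibits the natural modulus $g(t)$ directly. The cost is that $f$ is defined by an infimum rather than a formula. Your split along $\{u\le -t\}$ is also the one that makes the Chebyshev step immediate. The paper's display splits along $\{\psi\le -t\}$, where $h(-u)\ge h(t)$ is not available, since only $u\ge\psi$ is known there.
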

The generalized capacity $Cap_{\psi}$ is defined as 
\[
Cap_{\psi}(E) = \sup \left \{\int_E (dd^c u)^n \; : \; u\in PSH(\Omega), \; \psi\leq u\leq 0 \right \}. 
\]
\begin{proof}
	Since $\psi \in \mathcal F^a(\Omega)$, by definition the measure $\mu_\psi := (dd^c \psi)^n$ has finite mass and puts no mass on the set  $\{ \psi = - \infty\}$. By a classical result there exists a  concave increasing function $h : \mathbb R^+ \rightarrow \mathbb R^+$ such that $h(+\infty)=+\infty$ and $\int_{\Omega}h(-2\psi) (dd^c \psi)^n <+\infty$. 
 
 Take a psh function $u$ such that $\psi \leq u \leq 0$ and take a Borel set $E$. For $t>0$ we have  
 \begin{flalign*}
 \int_E (dd^c u)^n & = \int_{E\cap \{\psi\leq -t\}} (dd^c u)^n + \int_{E\cap \{\psi>-t\}} (dd^c \max(u,-t))^n\\
 &\leq \frac{1}{h(t)}\int_{\Omega} h(-u) (dd^c u)^n + t^n \int_{E} (dd^c \max(u/t,-1))^n\\
 &\leq \frac{1}{h(t)} 2^n\int_{\Omega} h(-2\psi) (dd^c \psi)^n +  t^n Cap_{BT}(E)\\
 & \leq \frac{C}{h(t)} +  t^n Cap_{BT}(E).
 \end{flalign*}
	We next choose $t>0$ such that $t^{2n} Cap_{BT}(E)= 1$. The result holds with 
	\[
	f(t)=\frac{C}{h(t^{-1/2n})} + \sqrt{t}.
	\]
\end{proof}
\subsubsection{The convergence theorem}
\begin{prop}\label{prop: convergence MA quasi-cont} 
	Let $(u_{1,j})_{j \in \N}, \cdots, (u_{n,j})_{j \in \N}$  be sequences in $\mathcal E^a(\Omega)$ decreasing to $u_1, \cdots, u_n \in \mathcal E^a(\Omega)$ respectively and let $(f_j)_{j \in \N}$ be a sequence of uniformly bounded quasi-continuous functions converging in capacity to a bounded quasi-continuous function $f$. Then 
 \[
 f_j dd^c u_{1,j} \wedge ... dd^c u_{n,j} \to f dd^c u_1 \wedge...dd^c u_n
 \] 
 in the weak sense of Radon measures on $\Omega$. 
\end{prop}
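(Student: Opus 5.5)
We want: for sequences in $\mathcal E^a(\Omega)$ decreasing to limits in $\mathcal E^a(\Omega)$, and uniformly bounded quasi-continuous $f_j\to f$ in capacity, show $f_j\, dd^c u_{1,j}\wedge\cdots\wedge dd^c u_{n,j}\to f\, dd^c u_1\wedge\cdots\wedge dd^c u_n$ weakly. The question is local, so fix a test function $\chi\in C_c(\Omega)$ and a neighbourhood $U\Subset\Omega$ of its support. By the local nature of $\mathcal E$, we first replace each $u_{k,j}$ and $u_k$ on a slightly larger set $\Omega'\Subset\Omega$ by functions in $\mathcal F^a$. For this, take $\psi_k\in\mathcal F(\Omega)$ with $\psi_k=u_{k,1}$ near $\overline{U}$ (after subtracting constants so everything is negative), and set $\tilde u_{k,j}:=\max(u_{k,j},\psi_k')$ with a suitable gluing, or simply use the fact that on $U$ all $u_{k,j}\ge u_k$ coincide with elements of $\mathcal F$. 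Then all functions are dominated below near $U$ by a single $\psi:=\psi_1+\cdots+\psi_n$, chosen with $(dd^c\psi)^n$ non-pluripolar. Here we use that $u_k\in\mathcal E^a$ and that sums of $\mathcal F^a$ functions remain in $\mathcal F^a$.

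The key step is a uniform domination of the mixed measures by $Cap_{BT}$. For $\psi\le v_k\le 0$, the polarization/mixed-mass inequality (Cegrell's $\int h\,dd^cv_1\wedge\cdots\le\prod(\int h(dd^cv_k)^n)^{1/n}$-type bound, or directly the fact that $\sum v_k/n\ge\psi$) gives
$$\int_E dd^cv_1\wedge\cdots\wedge dd^cv_n\le n^n\, Cap_\psi(E)\le n^n f(Cap_{BT}(E))$$
by Proposition \ref{prop: comparison of capacities}. Indeed, the mixed measure is bounded by $(dd^c(\sum_k v_k))^n$ up to a constant, and $\sum_k v_k\ge n\psi$. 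Hence the measures $\mu_j:=dd^cu_{1,j}\wedge\cdots$ and $\mu:=dd^cu_1\wedge\cdots$ are uniformly absolutely continuous with respect to capacity, with a common modulus $f$.

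With this in hand, we argue as follows. Given $\varepsilon>0$, quasi-continuity of $f$ gives an open set $G$ with $Cap_{BT}(G)<\varepsilon$ and $f|_{\Omega\setminus G}$ continuous. Extend it by Tietze to a continuous $g$ with $|g|\le\sup|f|=:M$. Then
$$\int\chi f_j\,d\mu_j-\int\chi f\,d\mu=\int\chi(f_j-f)\,d\mu_j+\int\chi(f-g)\,d(\mu_j-\mu)+\Big(\int\chi g\,d\mu_j-\int\chi g\,d\mu\Big).$$
The second term is at most $4M\|\chi\|_\infty f(\varepsilon)$. The third tends to $0$ by Cegrell's continuity of the mixed Monge–Ampère operator under decreasing sequences in $\mathcal E$. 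The first term is split over $\{|f_j-f|>\delta\}$, whose capacity tends to $0$ by convergence in capacity and so has $\mu_j$-mass at most $f(o(1))\to 0$, and its complement, which contributes at most $\delta\|\chi\|_\infty\sup_j\mu_j(\mathrm{supp}\,\chi)$. The total masses are bounded by Cegrell's theory, since the functions are dominated by $\psi\in\mathcal F$.

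The main obstacle is the uniform capacity domination. It requires carefully reducing to a common minorant $\psi\in\mathcal F^a$ near the support, while keeping Monge–Ampère measures unchanged there via Lemma \ref{lem: conservation mass}-type locality. It also requires justifying the mixed-measure bound by $(dd^c\sum v_k)^n$ for unbounded $v_k$, through approximation by $\max(v_k,-t)$ and the non-pluripolar hypothesis. If the construction of $\psi\in\mathcal F^a$ fails, we instead truncate: we estimate the mass of $\{u_k\le -t\}$ using $\mathcal E^a$, and on $\{u_k>-t\}$ use Bedford–Taylor bounds by $t^nCap_{BT}$, mimicking the proof of Proposition \ref{prop: comparison of capacities}.
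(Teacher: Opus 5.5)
Your proposal is correct and follows the paper's proof essentially step by step: localize, dominate all the mixed measures $\mu_j,\mu$ uniformly by $Cap_{BT}$ through Proposition \ref{prop: comparison of capacities} applied to a common minorant, replace $f$ by a continuous $g$ off an open set of small capacity and use Cegrell's monotone continuity for the $g$-part, and handle $f_j-f$ via convergence in capacity plus a uniform mass bound (which the paper obtains from $P({\bf 1}_D u_{p,j})\in\mathcal F^a$ and monotonicity of total mass). The only slip is in your localization step: the common minorant must be built from the limits $u_k$ (the paper uses $\psi=(u_1+\cdots+u_n)/n$), not from $u_{k,1}$, because $u_{k,j}\le u_{k,1}$ puts $u_{k,1}$ above the sequence rather than below it.
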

\begin{proof}
We fix $D\Subset \Omega$ and prove the weak convergence in $D$. Fixing $\varepsilon>0$, by quasi-continuity, there exists an open set $G\subset D$ and a continuous function $g \in C^0(D)$ such that $Cap_{BT}(G) \leq \varepsilon$ and $f=g$ on $D\setminus G$.  By continuity of the mixed Monge-Amp\`ere operator, we have 
	\[
	g dd^c u_{1,j} \wedge ... dd^c u_{n,j} \to g dd^c u_1 \wedge...dd^c u_n
	\]
	weakly in $D$. To prove the result for the case when $f_j=f$, it remains to prove that 
	\[
	\int_D |g-f| \left (dd^c u_{1,j} \wedge ... dd^c u_{n,j} + dd^c u_{1} \wedge ... dd^c u_{n}\right)  \leq \delta(\varepsilon),
	\]
	where $\delta(\varepsilon)$ is independent of $j$ and it converges to $0$ as $\varepsilon\to 0$. Since $f,g$ are bounded and $f=g$ on $D\setminus G$ the above quantity is smaller than 
	\[
	C \int_G (dd^c v_j)^n,
	\]
	where $2n v_j = \sum_{p=1}^n (u_{p,j}+u_p)$. Setting $\psi:=(u_1+...v_n)/n$, we obtain (since $u_{p,j}\geq u_p$)
	\[
	\int_G (dd^c v_j)^n \leq n^n Cap_{\psi}(G).
	\]
	Using Proposition \ref{prop: comparison of capacities}, we finish the proof for the first case. 
	
	To treat the general case, we recall that
	\[
	Cap_{BT}(|f_j-f|\geq \varepsilon) \to 0.
	\]
	Thus, using  Proposition \ref{prop: comparison of capacities} again we have 
	\[
	\int_D |f_j-f| dd^c u_{1,j} \wedge ... dd^c u_{n,j}  \leq \eta(\varepsilon)  + \varepsilon \int_D dd^c u_{1,j} \wedge...\wedge dd^c u_{n,j}.
	\]
	with $\eta(\varepsilon) \to 0$ as $\varepsilon\to 0$. It remains to prove that the last integral is bounded in $j$. To see this we define $\tilde{u}_{p,j}:= P({\bf 1}_{D}u_{p,j})\in \mathcal F^a$. Then 
	\[
	\int_D dd^c u_{1,j} \wedge...\wedge dd^c u_{n,j} \leq \int_{\Omega} dd^c \tilde{u}_{1,j} \wedge...\wedge dd^c \tilde{u}_{n,j} \leq \int_{\Omega} dd^c \tilde{u}_{1} \wedge...\wedge dd^c \tilde{u}_{n}<+\infty.
	\] 
\end{proof}
\subsubsection{The comparison principle}

\begin{theorem}\label{comparaison principe in N} 
 Let  
 $u, v \in \mathcal{N}_\phi^a(\Omega)$ and $w_1, \cdots w_{n-p} \in \mathcal{E}^a(\Omega)$ ($1 \leq p \leq n-1$). Then for any $s \geq 0$, we have
 $$
 \int_{\{u<v-s\}} (dd^cv)^{p} \wedge T   
 \leqslant \int_{\{u< v-s\}} (dd^c u)^{p} \wedge T,
 $$
 where  $T := dd^c w_1 \wedge \cdots \wedge dd^c w_{n-p}$ if $1 \leq p \leq n-1$ and $T = {\bf 1}_\Omega$ if $p = n$.
\end{theorem}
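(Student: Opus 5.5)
The plan is to reduce to a comparison principle for functions with matching ``boundary behaviour'', by localising the set $\{u<v-s\}$ away from $\partial\Omega$ and then using Stokes' formula (Lemma \ref{lem: conservation mass}) together with the maximum principle (Theorem \ref{thm:principe du max}). By hypothesis there are $\psi_1,\psi_2\in\mathcal N_0(\Omega)$ with $\psi_1+\phi\le u\le\phi$ and $\psi_2+\phi\le v\le \phi$. Fix $s\ge 0$ and $\varepsilon>0$. First replace $v$ by $v_\varepsilon:=v-s-\varepsilon$, so it suffices to prove the inequality on $\{u<v_\varepsilon\}$ and let $\varepsilon\downarrow 0$ (the sets increase to $\{u<v-s\}$). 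Set $w:=\max(u,v_\varepsilon)\in\mathcal E^a$. On $\{u<v_\varepsilon\}$ we have $w=v_\varepsilon$, and by the first part of Theorem \ref{thm:principe du max}, together with its mixed version (obtained through the same plurifine-locality argument), ${\bf 1}_{\{u<v_\varepsilon\}}(dd^c w)^p\wedge T={\bf 1}_{\{u<v_\varepsilon\}}(dd^c v)^p\wedge T$. Similarly, on $\{u>v_\varepsilon\}$ the measure of $w$ agrees with that of $u$, and by the second part of Theorem \ref{thm:principe du max} (valid since everything lies in $\mathcal E^a$) we have $(dd^c w)^p\wedge T\ge {\bf 1}_{\{u\ge v_\varepsilon\}}(dd^c u)^p\wedge T$ on $\{u\ge v_\varepsilon\}$.

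The key step is to show that $\int_\Omega (dd^c w)^p\wedge T\le\int_\Omega (dd^c u)^p\wedge T$, in a suitably localised sense, since $w$ and $u$ ``coincide near the boundary''. Heuristically, $u-v_\varepsilon\ge \psi_1+s+\varepsilon>0$ near $\partial\Omega$, because $\psi_1$ has generalised boundary value $0$. To make this rigorous, I will truncate. Replace $u,v$ by $u^k:=\max(u,\phi+k\rho)$ and $v^k:=\max(v,\phi+k\rho)$ (or by envelopes built from $\psi_j$ and $\rho$), so that $u^k,v^k\in\mathcal N_\phi^a$ and $u^k-v^k_\varepsilon\ge s+\varepsilon+k\rho-\ldots$. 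The aim is to get $\{u^k<v^k_\varepsilon\}\Subset\Omega$. Then $w^k=u^k$ outside a compact set, and Lemma \ref{lem: conservation mass} gives equal total masses on a neighbourhood $\Omega''$. Subtracting, via the decomposition above, yields
\[
\int_{\{u^k<v^k_\varepsilon\}}(dd^c v^k)^p\wedge T\le \int_{\{u^k<v^k_\varepsilon\}}(dd^c u^k)^p\wedge T.
\]
A subtlety is that $T$ has possibly infinite mass near $\partial\Omega$. Working on $\Omega''$ with finite mass handles this, because $w_j\in\mathcal E$ locally agrees with $\mathcal F$ functions.

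Finally, I will let $k\to\infty$. On the relevant sets $u^k\searrow u$ and $v^k\searrow v$, and Proposition \ref{prop: convergence MA quasi-cont} together with quasi-continuity of ${\bf 1}$ of plurifine open sets passes the inequality to the limit. One standard device is to replace indicators with $\max(v_\varepsilon-u,0)\wedge 1$ type bounded quasi-continuous functions, and then use lower semicontinuity on the open plurifine set $\{u<v_\varepsilon\}$. I expect this last limiting step to be the main obstacle, together with arranging relative compactness of $\{u^k<v^k_\varepsilon\}$ when $\psi_1$ is only in $\mathcal N_0$ and not continuous. If direct truncation fails, I would instead use the approach of \cite{ACCP09}: approximate $\psi_1$ from above by $\mathcal E^0$ functions via the LMP-majorant construction $\widetilde{\cdot}^{\,j}$, which equal the original function off $\Omega_j$.
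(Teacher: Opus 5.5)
There is a genuine gap, and it lies in the limiting step you postpone.

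\textbf{The finite-$k$ step never uses the boundary hypothesis.} Your finite-$k$ argument is sound. The relative compactness you worry about is in fact automatic: $u^k\ge\phi+k\rho$ and $v^k_\varepsilon\le\phi-s-\varepsilon$ give $\{u^k<v^k_\varepsilon\}\subset\{\rho<-(s+\varepsilon)/k\}\Subset\Omega$. But this uses only $u,v\le\phi$, never $\psi_1\in\mathcal N_0(\Omega)$, and the theorem is false without that hypothesis. Take $\phi\equiv0$, $u\equiv-1$, $v=\delta\rho$ with $0<\delta<1$, $p=n$, $s=0$. Then $\{u<v\}=\Omega$, while $\int_\Omega(dd^cv)^n>0=\int_\Omega(dd^cu)^n$.

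\textbf{So the limit must carry all the content, and it is not supplied.} You would need
\[
\limsup_{k\to\infty}\int_{\{u^k<v^k_\varepsilon\}}(dd^cu^k)^p\wedge T\le\int_{\{u<v-s\}}(dd^cu)^p\wedge T ,
\]
for varying measures on sets that exhaust $\Omega$. The tools you list (the convergence proposition, quasi-continuity, capacity) give only local weak convergence. They cannot prevent mass from piling up near $\partial\Omega$. Lower semicontinuity on plurifine open sets helps with the left-hand side, but here you need an upper bound. In the example above ($v^k=v$ there), $(dd^cu^k)^n$ puts mass $k^n\int_{\{\rho\le-1/k\}}(dd^c\rho)^n\to+\infty$ on the level set $\{\rho=-1/k\}$. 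That set lies in $\{u^k<v^k_\varepsilon\}$. Nothing in your scheme explains how $\psi_1\in\mathcal N_0(\Omega)$ excludes this.

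\textbf{The missing idea.} The paper leaves $u$ untouched and instead lowers $v$ by a psh function adapted to $u$:
\[
h_k:=P\bigl(\mathbf 1_{\Omega\setminus\Omega_k}(u-\phi)\bigr),
\]
with $\Omega_k\nearrow\Omega$. These functions satisfy $\psi_1\le h_k\le0$, increase in $k$, and are maximal on $\Omega_k$. Hence $h_k\nearrow h$ quasi-everywhere, where $h\in\mathcal N^a_0(\Omega)$ and $(dd^ch)^n=0$. Cegrell's uniqueness theorem in $\mathcal N^a_0(\Omega)$ then forces $h\equiv0$; this is exactly where the boundary hypothesis enters.

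\textbf{How it finishes the proof.} Since $v+h_k-s\le\phi+h_k\le u$ off $\Omega_k$, your Stokes/maximum-principle argument applies verbatim to $\max(v+h_k-s,u)$. Using $(dd^cv)^p\wedge T\le(dd^c(v+h_k))^p\wedge T$ and $h_k\le0$, it yields
\[
\int_{\{u<v+h_k-s-\varepsilon\}}(dd^cv)^p\wedge T\le\int_{\{u\le v-s\}}(dd^cu)^p\wedge T .
\]
Both sides now involve fixed measures. Letting $k\to\infty$ and then $\varepsilon\to0$ is plain monotone convergence, since the sets increase quasi-everywhere and the measures do not charge pluripolar sets. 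To get $\{u<v-s\}$ on the right, apply this with $s$ replaced by $s+\eta$ and let $\eta\to0$.

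Your fallback mentions the closely related envelopes $\widetilde{\psi_1}^{\,j}$ but not how to use them. The point is that perturbing $v$ downward makes the measure inequality go the right way and keeps the right-hand side fixed, so no limit of varying measures is ever needed.
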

When $T={\bf 1}_{\Omega}$, our result is a particular case of \cite[Theorem 3.2]{ACCP09}. We provide below a slightly different proof using the same idea. 
\begin{proof}
	Let $\Omega_k\nearrow \Omega$ be a fundamental sequence of strictly pseudoconvex domains  in $\Omega$. There is $\psi \in \mathcal{N}_0^a(\Omega)$ such that $u \geqslant \phi + \psi$. Let us define the following function 
 $$
 h_k = P(\mathbf{1}_{\Omega \setminus \Omega_k} (u-\phi)) \geqslant  P(\mathbf{1}_{\Omega \setminus \Omega_k} \psi).
 $$
 We claim that $h_k\nearrow 0$ q.e. in $\Omega$. Indeed it is clear that $h_k\nearrow h$ q.e. in $\Omega$, where $h \in {\rm PSH}(\Omega)$ and $\psi \leq h \leq 0$.  Hence $h \in \mathcal N^a_0(\Omega)$. Since for a fixed $\ell \in \N,$ we have $(dd^c h_k)^n = 0$ in $\Omega_\ell$ for any $k \geq \ell$, it follows that  $(dd^c h)^n = 0$  in $\Omega$.  Hence $h = 0$ in $\Omega$ by uniqueness in $\mathcal N^a_0(\Omega)$ (see \cite[Theorem 4.1]{Ceg08}).
 
 Fix $s \geq 0$ and set $v_{k} = \max(v+h_k-s, u)$. Since $h_k + v \leq h_k + \phi \leq u$ on $\Omega \setminus \Omega_{k+1}$, we have $v_{k} = u$  sur $\Omega \setminus \Omega_{k+1}$. 
 Then, by Lemma \ref{lem: conservation mass}, for any fixed $\ell > k+1$,  we have
	$$\int_{\Omega_\ell} (dd^c v_{k})^{n-p} \wedge T = \int_{\Omega_\ell} (dd^c u)^{n-p} \wedge T.$$
 Hence by splitting each integral in two parts we obtain
 \begin{flalign*}
& \int_{\Omega_\ell \cap \{u \leq v+h_k-s\}} (dd^c v_{k})^{p} \wedge T + \int_{\Omega_\ell \cap\{u > v+h_k-s\}} (dd^c v_{k})^{p} \wedge T \\
& = \int_{\Omega_\ell \cap \{u \leq v+h_k-s\}} (dd^c u)^{p} \wedge T + \int_{\Omega_\ell \cap\{u > v+h_k-s\}} (dd^c u)^{p} \wedge T.
 \end{flalign*}
	
 By the pluripotential maximum principle, we have
 $$
 \int_{\Omega_\ell \cap\{u > v+h_k-s\}} (dd^c v_{k})^{p} \wedge T = \int_{\Omega_\ell \cap\{u > v+h_k-s\}} (dd^c u)^{p} \wedge T.
 $$
 Therefore
 $$
 \int_{\Omega_\ell \cap \{u \leq v+h_k-s\}} (dd^c v_{k})^{p} \wedge T = \int_{\Omega_\ell \cap \{u \leq v+h_k-s\}} (dd^c u)^{p} \wedge T.
 $$
  Applying again the pluripotential maximum principle we obtain
 $$
  \int_{\Omega_\ell \cap \{u < v+h_k - s- \varepsilon \}} (dd^c v_{k})^{p} \wedge T =
  \int_{\Omega_\ell \cap \{u < v+h_k - s- \varepsilon\}} (dd^c (v + h_k))^{p} \wedge T.
 $$
 Now observe that  $(dd^c v)^p \wedge T \leq (dd^c (v+h_k))^p \wedge T$. Then   we have

\begin{flalign*}
 \int_{\Omega_\ell \cap \{u < v+h_k - s-\varepsilon\}} (dd^c v)^{p} \wedge T & \leq \int_{\Omega_\ell \cap \{u \leq v+h_k - s\}} (dd^c u)^{p} \wedge T \\
 &  \leq \int_{\Omega_\ell \cap \{u < v - s  \}} (dd^c u)^{p} \wedge T.
\end{flalign*}
Since $h_k \nearrow 0$ as $k \to + \infty$, letting first $k \to +\infty$, $\varepsilon \to 0$ and $\ell \to + \infty$ we obtain the required inequality.
\end{proof}

\subsection{The projection theorem} 
Envelopes of plurisubharmonic functions are classical objects (see \cite{GZbook}). Here we follow the presentation in \cite{GLZ19}.

Let $f : \Omega \longrightarrow \R \cup \{-\infty\}$ be quasi-continuous function defined quasi-everywhere in $\Omega$, which is locally bounded from above. A typical example is when $f =\min(w_1 - w_2,0)$ where $w_1, w_2$ are plurisubharmonic functions in $\Omega$. Then $f$ is well defined on $\Omega \setminus \{w_2 = - \infty\}$ and is quasi-continuous in $\Omega$ by \cite{BT82}.

 We define the plurisubharmonic envelope of $f$ as follows:
$$
P(f) = P_\Omega (f) := \sup \{v \in {\rm PSH}(\Omega) ; \, v \leq f \, \, \text{q.e. in}\, \,  \Omega\}.
$$
Here $v \leq f$ q.e. in $\Omega$ means that the inequality is satisfied outside a pluripolar set. 

\begin{lem} \label{lem:MA contact set} Assume there exists $\psi \in \mathcal E^a(\Omega)$ such that $\psi \leq f$ q.e. in $\Omega$.
Then $P(f) \in \mathcal E^a(\Omega)$, $P(f) \leq f$ q.e. in $\Omega$ and satisfies the following property
\begin{equation} \label{eq:orthogonality}
{\bf 1}_{\{P(f) < f\}}  (dd^c P(f))^n = 0, 
\end{equation}
in the sense of Borel measures on $\Omega$ i.e. the Monge-Ampère measure $(dd^c P(f))^n$ is carried on the contact set $\{P(f) = f\}$.
\end{lem}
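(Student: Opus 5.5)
We must prove Lemma \ref{lem:MA contact set}: $P(f)\in\mathcal E^a(\Omega)$, $P(f)\le f$ q.e., and $(dd^c P(f))^n$ is carried by the contact set $\{P(f)=f\}$.

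\emph{Step 1: membership in $\mathcal E^a(\Omega)$.} Since $\psi\le f$ q.e., $\psi$ is a competitor, so $\psi\le P(f)$. Also $P(f)$ is locally bounded above, because $f$ is. Replacing $P(f)$ by its usc regularization $P(f)^*$, it is psh. By Choquet's lemma, $P(f)^*$ is the regularized supremum of a countable subfamily. Negligible sets are pluripolar (Bedford--Taylor), so $P(f)^*\le f$ q.e. Hence $P(f)^*$ is itself a competitor and $P(f)=P(f)^*$. Membership in $\mathcal E$ is local. On each $\Omega'\Subset\Omega$ we have $\psi\le P(f)\le C$, and $\psi\in\mathcal E(\Omega')$, so $P(f)\in\mathcal E(\Omega')$ because $\mathcal E$ is stable under taking larger psh functions. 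For the same reason $(dd^cP(f))^n$ charges no pluripolar set. I would check this using $\max(P(f),\psi+\cdot)$-type comparisons, i.e.\ the fact that $\mathcal E^a$ is stable under psh majorants in $\mathcal E$ (Cegrell/BGZ). This step is routine.

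\emph{Step 2: reduction to continuous obstacles.} By quasi-continuity, first assume $f$ is continuous and bounded on a neighbourhood of a small ball $B\Subset\Omega$. Take a point $z_0\in B$ with $P(f)(z_0)<f(z_0)-2\delta$. On a small ball $B'$ around $z_0$ we then have $P(f)<f-\delta$, using continuity of $f$ and usc of $P(f)$. The balayage argument applies here. Truncate first, $u_t=\max(P(f),-t)$, and let $v$ solve the Bedford--Taylor Dirichlet problem $(dd^cv)^n=0$ in $B'$ with $v=u_t$ on $\partial B'$. Then $v\ge u_t$ and $v\le \sup_{\partial B'}u_t\le f-\delta/2$ after shrinking $B'$. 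Gluing $v$ into $P(f)$ gives a competitor, so $v\le P(f)$. Hence $P(f)$ is maximal on $B'$ (after letting $t\to\infty$), and $(dd^cP(f))^n=0$ there.

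\emph{Step 3: general quasi-continuous $f$ (main obstacle).} Here $\{P(f)<f\}$ need not be open. Using quasi-continuity, I would choose open sets $G_j$ with $Cap_{BT}(G_j)\to0$ such that $f$ is continuous on $\Omega\setminus G_j$. Then I would approximate $f$ by a decreasing sequence of continuous functions $f_j\searrow f$ q.e., for instance via Tietze extension with $f_j\ge f$ and $f_j$ decreasing. Set $u_j=P(f_j)$. These decrease to some $u\ge P(f)$ with $u\le f$ q.e., so $u=P(f)$. By Step 2, $\int (f_j-u_j)\,(dd^cu_j)^n$ vanishes locally, with the weight made bounded as $\min(f_j-u_j,1)$. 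Proposition \ref{prop: convergence MA quasi-cont} applies with the quasi-continuous bounded functions $\min(f_j-u_j,1)\to\min(f-P(f),1)$ in capacity, after truncating $f$ from below by $\psi$ to control the $-\infty$ values. It gives $\int \min(f-P(f),1)\,(dd^cP(f))^n=0$ on each $D\Subset\Omega$. The delicate points are the convergence in capacity, which follows from monotone convergence of psh functions together with the capacity smallness of the $G_j$, and the lower truncation. Proposition \ref{prop: comparison of capacities} handles both, via the bound by $\psi\in\mathcal E^a$, since the measures $(dd^cu_j)^n$ are uniformly dominated in $Cap_\psi$. This yields \eqref{eq:orthogonality}.
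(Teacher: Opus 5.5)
Your Step 1 matches the paper's argument. Step 3, however, rests on an approximation that does not exist for a general quasi-continuous obstacle, namely continuous $f_j\ge f$ with $f_j\searrow f$ q.e. The infimum $g:=\inf_j f_j$ is upper semicontinuous and equals $f$ off a pluripolar set $E$. You would therefore need $\limsup_{w\to z,\,w\notin E} f(w)\le f(z)$ at every $z\notin E$.

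Obstacles of the paper's typical form $\min(w_1-w_2,0)$, used e.g.\ in Proposition~\ref{prop:uniqueness}, can violate this. Here is an example for $n\ge 2$:
\begin{itemize}
\item Take $w_1\equiv-2$ and $w_2(z)=\max\bigl(\sum_k\varepsilon_k\log|z_1-a_k|,-1\bigr)$, with $\{a_k\}$ dense in the $z_1$-projection of $\Omega$ and $\sum_k\varepsilon_k$ so small that $\{w_2>-1\}$ has positive measure. Then $f=-2-w_2$.
\item Every nonempty open subset of $\Omega$ contains a nonempty open set on which $w_2=-1$.
\item Hence for any pluripolar $E$ and any $z\in\{w_2>-1\}\setminus E$ you get $\limsup_{w\to z,\,w\notin E}f(w)\ge -1>f(z)$.
\end{itemize}
This $w_2$ is even a bounded element of $\mathcal M(\Omega)$. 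The same reasoning shows that no continuous $f_j\ge f$ can agree with $f$ off a set of small capacity, so your Tietze construction fails too. As written, your proof only covers obstacles that agree q.e.\ with a usc function, such as $u+t\chi$ in Lemma~\ref{DerivEnergie}, not the lemma as stated.

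The missing ingredient is approximation from above by \emph{lower} semicontinuous obstacles, for which your balayage works.
\begin{itemize}
\item Choose decreasing open sets $G_j$ with $Cap_{BT}(G_j)\to 0$ and $f|_{\Omega\setminus G_j}$ continuous, and a continuous $\Phi\ge f$.
\item Set $g_j=f$ on $\Omega\setminus G_j$ and $g_j=\Phi$ on $G_j$. Then $g_j$ is lsc and $g_j\searrow f$ off the pluripolar set $\bigcap_j G_j$, so $P(g_j)\searrow P(f)$.
\item Since $g_j$ is lsc, $\{P(g_j)<g_j\}$ is open, and balayage gives $(dd^cP(g_j))^n=0$ there.
\item To pass to the limit without non-monotone weights, note that for $k\ge m$ one has $\min(f-P(g_m),1)^+\le \min(g_k-P(g_k),1)$ q.e. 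Hence $\int\chi\,\min(f-P(g_m),1)^+(dd^cP(g_k))^n=0$ for $0\le\chi\in C^0_c(\Omega)$.
\item Let $k\to\infty$ using Proposition~\ref{prop: convergence MA quasi-cont} with a fixed weight, then let $m\to\infty$ by monotone convergence.
\end{itemize}

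The paper takes a shorter route. It lets $u_k$ be the largest psh function on $\Omega_k\Subset\Omega$ below $f$, so that $f$ is bounded above there. It quotes \cite{GLZ19,ACLR25} for $\int_{\{u_k<f\}}(dd^c u_k)^n=0$ and passes to the limit along $u_k\searrow P(f)$.

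Separately, in Step 2 the truncation $u_t=\max(P(f),-t)$ breaks the gluing. Where $P(f)<-t$ on $\partial B'$, the glued function is either not psh or not below $f$. Instead, take the balayage of $P(f)\in\mathcal E$ itself and use that maximal functions in $\mathcal E$ have vanishing Monge--Amp\`ere measure.
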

Observe that when $f \in PSH(\Omega)$ then $P(f) = f$; hence $P^2(f) = P(f)$ for any function $f$. Moreover 
the equation \eqref{eq:orthogonality} is  a kind of orthogonality property. Indeed it implies that for any continuous  test function $\chi$ in $\Omega$, we have 
$$
\int_\Omega (f - P(f)) \chi (dd^c P(f))^n = 0.
$$
In this sense $P$ acts as an "orthogonal projection" onto the convex set $PSH(\Omega)$. 
\begin{proof}
Since $\psi \in \mathcal V(f) := \{v \in {\rm PSH}(\Omega) ; \, v \leq f \, \, \text{q.e. in}\, \,  \Omega\}$, the upper semi-continuous regularization  of $P(f)$ is plurisubharmonic i.e. $v := P(f)^* \in PSH(\Omega)$ and $\psi \leq P(f)$ q.e. in $\Omega$. Since $\{P(f) < P(f)^*\}$ is pluripolar by \cite{BT82}, it follows that $ v \leq P(f)$ q.e. in $\Omega$.
Moreover by Choquet's Lemma $v = (\sup_{j\in \N} v_j)^*$ where $v_j \in \mathcal V(f)$. Hence 
$v \leq f$ q.e. in $\Omega$ since the union of a countable pluripolar sets is pluripolar. Therefore  $\psi \leq v = P(f)$ which proves that $P(f)^* = P(f) \in \mathcal E^a (\Omega)$ in $\Omega$.

We now prove the formula \eqref{eq:orthogonality} by following the arguments of \cite[Theorem 2.7]{ACLR25} and \cite[Theorem 2.7]{GLZ19}. Let $(\Omega_k)$ be a sequence of relatively compact open subsets of $\Omega$ that increase to $\Omega$ and let $u_k$ be the largest psh function in $\Omega_k$ lying below $f$ in $\Omega$. Since $f$ is bounded from above in $\Omega_k$, by \cite[Theorem 2.7]{ACLR25} and \cite[Theorem 2.7]{GLZ19}, we have 
\[
\int_{\{u_k<f\}} (dd^c u_k)^n =0. 
\]
Observe also that $u_k \searrow P(f)$ as $k\to +\infty$. We can therefore argue as in \cite[Theorem 2.7]{ACLR25} to get \eqref{eq:orthogonality} by letting $k\to +\infty$.  
\end{proof}

From this result we deduce the following pluripotential minimum principle which was proved in \cite[Lemma 4.1]{GLZ19} (see also \cite[Corollary 2.9]{ACLR25} for an elementary proof).  
\begin{proposition} \label{prop:principe min} Let $u,v \in \mathcal E^a(\Omega)$. Assume that there exists $\psi \in \mathcal E^a(\Omega)$ such that $\psi \leq \inf \{u,v\}$ in $\Omega$. Then $w:= P(u,v) := P(\min\{u,v\}) \in \mathcal E^a(\Omega)$ and satisfies the following inequality  
 $$
 (dd^c w)^n \leq {\bf 1}_{\{ u\leq v\}} (dd^c u)^n + {\bf 1}_{\{u > v\}} (dd^c v)^n.
 $$
\end{proposition}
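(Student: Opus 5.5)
We apply Lemma \ref{lem:MA contact set} with $f=\min\{u,v\}$. The function $\min\{u,v\}$ is quasi-continuous and locally bounded from above, and by hypothesis $\psi\le f$ on $\Omega$. The lemma therefore gives $w=P(f)\in\mathcal E^a(\Omega)$, $w\le \min\{u,v\}$ quasi-everywhere, and
\[
{\bf 1}_{\{w<\min(u,v)\}}(dd^c w)^n=0 .
\]
Since $(dd^c w)^n$ puts no mass on pluripolar sets, it is carried by the contact set $K=\{w=\min(u,v)\}$, up to a pluripolar set. We split
\[
K\subset \big(\{u\le v\}\cap\{w=u\}\big)\cup\big(\{u>v\}\cap\{w=v\}\big),
\]
so it suffices to prove the two inequalities
\[
{\bf 1}_{\{w=u\}}(dd^c w)^n\le {\bf 1}_{\{w=u\}}(dd^c u)^n,
\qquad
{\bf 1}_{\{w=v\}}(dd^c w)^n\le {\bf 1}_{\{w=v\}}(dd^c v)^n .
\]

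Both follow from the last assertion of Theorem \ref{thm:principe du max}. That assertion states that if $w\le u$ with $u,w\in\mathcal E^a(\Omega)$, then ${\bf 1}_{\{u=w\}}(dd^c w)^n\le {\bf 1}_{\{u=w\}}(dd^c u)^n$. Here $w\le u$ holds quasi-everywhere, hence everywhere, because both functions are psh: a psh function is determined by its values off a pluripolar set, so $w\le u$ q.e. forces $w\le u$ everywhere. The same argument applies with $v$ in place of $u$. Restricting the first inequality to $\{u\le v\}$ and the second to $\{u>v\}$, then adding, we obtain
\[
(dd^c w)^n={\bf 1}_K(dd^c w)^n\le {\bf 1}_{\{u\le v\}}(dd^c u)^n+{\bf 1}_{\{u>v\}}(dd^c v)^n .
\]

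The most delicate point is that Theorem \ref{thm:principe du max} is stated for functions in $\mathcal E(\Omega)$, whereas our $u$, $v$ and $w$ may be unbounded above. We would handle this by localizing. On any $\Omega'\Subset\Omega$ the restrictions of $u$, $v$ and $w$ lie in $\mathcal E^a(\Omega')$, by the local definition of the class adopted in the paper. The maximum principle can then be applied on $\Omega'$ after subtracting a common constant, which changes no Monge--Ampère measure and no sets of the form $\{u=w\}$. Since the desired inequality is an inequality of Borel measures, proving it on every such $\Omega'$ proves it on $\Omega$.
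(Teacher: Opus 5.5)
Your proof is correct and follows the route the paper indicates when it says the minimum principle is deduced from Lemma \ref{lem:MA contact set}: $(dd^c P(\min\{u,v\}))^n$ is carried by the contact set, and the last assertion of Theorem \ref{thm:principe du max} is then applied separately on $\{u\le v\}$ and on $\{u>v\}$. Your splitting of the contact set as $(\{u\le v\}\cap\{w=u\})\cup(\{u>v\}\cap\{w=v\})$ and your upgrade of $w\le\min\{u,v\}$ from quasi-everywhere to everywhere are exactly the points the argument needs, and your localization step is harmless.
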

%\begin{proof}
%Since $\psi \leq \min\{u,v\}$, the function $w:= P(u,v) := P(\min\{u,v\})$ is well defined and satisfies $\psi \leq w \leq \min\{u,v\}$. By  Lemma \ref{lem:MA contact set} $(dd^c w)^n$ is carried by the contact set 
% $$
% \{w = \min(u,v)\} = \{w = u\} \cup \{w = v\}   = \{u \leq v\} \cup \{u > v\}\cdot
% $$
% Therefore 
%$$
%(dd^c w)^n \leq {\bf 1}_{\{u \leq v\}} (dd^c w)^n + {\bf 1}_{ \{u >  v\}} (dd^c w)^n.
%$$
%On the other hand, since $w \leq u$  and $ \{u \leq v\} \subset \{w = u\}$, it follows from the maximum principle Theorem \ref{thm:principe du max} that ${\bf 1}_{\{u \leq v\}} (dd^c w)^n \leq {\bf 1}_{\{u \leq v\}} (dd^c u)^n$. In the same way we deduce that ${\bf 1}_{\{u > v\}} (dd^c w)^n \leq {\bf 1}_{\{u > v\}} (dd^c v)^n$.
%Hence 
%$$
%(dd^c w)^n \leq {\bf 1}_{\{u \leq v\}} (dd^c u)^n + {\bf 1}_{ \{u >  v\}} (dd^c v)^n.
%$$
%\end{proof}

\section{A new  class of psh functions with finite energy}\label{sect: new energy class}
\subsection{The class $\mathcal E^1_{\phi}(\Omega)$}
Let $\phi \in \mathcal M(\Omega).$ We will introduce a new class of plurisubharmonic functions with boundary value $\phi$ and finite energy.

$$
\mathcal{E}_\phi^1(\Omega)=\{u \in \mathcal E (\Omega)\, ; \;  \exists \, v \in \mathcal E^1(\Omega) \, ; \, v + \phi \leq u \leq \phi, \, ~ \, e_\phi(u) <+\infty \},
$$
where
$$
e_\phi(u)  := \int_\Omega (\phi-u) (dd^c u)^n,
$$
is  the partial $\phi$-energy for $u \in \mathcal E_\phi^1(\Omega)$.
%Observe that it follows from the definition that the measure $(dd^c u)^n$ puts no mass on the set $\{ u = - \infty\} \setminus \{\phi = - \infty\}$. 

It follows also  from the definition that if  $v + \phi \leq u \leq \phi$  and $v \in \mathcal E^1(\Omega)$ then $ v \leq P(u-\phi) \leq 0$, hence $P(u-\phi) \in \mathcal{E}^1(\Omega)$. In some sense this is a way to express that $u-\phi$ has "boundary values" equal to $0$. Observe that when $n=1$, $\phi$ is harmonic and then $P(u-\phi) = u - \phi$ and  $e_\phi(u) = \int_{\Omega}(\phi-u) dd^c (u - \phi) = \Vert \nabla (u-\phi)\Vert^2$. This means that $u-\phi \in W^{1,2}_0(\Omega)$. 
%Since $\mathcal{E}^1(\Omega) \subset L^p(\Omega)$ for any $p > 1$, it follows that $\phi - u \in L^p(\Omega)$ and then u \in \subset L^p(\Omega)$ for any $p > 1$.

\begin{lemma} \label{lem:increasing}
 Let $u \in \mathcal{E}_\phi^1(\Omega)$. Then for any $1\leq p \leq n-1$, we have
 \begin{flalign*}
 \int_{\Omega} (\phi-u) (dd^c u)^p \wedge (dd^c \phi)^{n-p}  
 &\leq  \int_{\Omega} (\phi-u) (dd^c u)^{p+1}  \wedge (dd^c \phi)^{n-p-1} \\
 &\leq \int_{\Omega} (\phi-u) (dd^c u)^{n} < \infty.
\end{flalign*}
In particular  all the mixed terms are finite.
\end{lemma}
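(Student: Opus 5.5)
The inequality is an integration by parts argument: move one $dd^c$ from the current onto the weight $\phi-u$, and use that $(dd^c\phi)^n=0$ and that $\phi$ is maximal. Fix $1\le p\le n-1$ and write $T_p:=(dd^c u)^p\wedge(dd^c\phi)^{n-p-1}$. Formally,
\[
(\phi-u)\,(dd^c u)^p\wedge(dd^c\phi)^{n-p}=(\phi-u)\,dd^c\phi\wedge T_p .
\]
Integrating by parts once, with the boundary terms vanishing because $u-\phi$ has zero boundary values, gives
\[
\int_\Omega(\phi-u)\,dd^c\phi\wedge T_p=\int_\Omega \phi\; dd^c(\phi-u)\wedge T_p
=\int_\Omega (\phi-u)\, dd^c u\wedge T_p+\int_\Omega d(\phi-u)\wedge d^c(\phi-u)\wedge T_p .
\]
This formal identity is not quite right as written; the cleaner statement is the symmetric one,
\[
\int (\phi-u)\,dd^c\phi\wedge T_p-\int(\phi-u)\,dd^c u\wedge T_p=\int (\phi-u)\,dd^c(\phi-u)\wedge T_p=-\int d(\phi-u)\wedge d^c(\phi-u)\wedge T_p\le 0 .
\]
This is exactly the first inequality. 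Iterating in $p$ gives the chain up to $(dd^c u)^n$. Note that the iteration never needs $(dd^c\phi)^n=0$, except that the case $p=0$ would involve it; we only need $p\ge1$.

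To make this rigorous I would approximate, and proceed in the following order.

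\textbf{Step 1 (approximation).} Since $v+\phi\le u\le\phi$ with $v\in\mathcal E^1(\Omega)$, replace $u$ by $u_j:=\max(u,\phi+v_j)$, where $v_j\in\mathcal E^0(\Omega)$ decreases to $v$. Then $u_j-\phi$ is bounded and tends to $0$ at the boundary in the sense that $u_j=\phi+\text{(something in }\mathcal E^0)$ near $\partial\Omega$. Also localize $\phi$ by replacing it with $\max(\phi,-t)$ on compact sets if needed; since $\phi$ may be unbounded, work on the sets $\Omega_k$ and use Lemma~\ref{lem: conservation mass}.

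\textbf{Step 2 (bounded case).} For bounded $w:=\phi-u_j\ge0$ vanishing near the boundary in the Cegrell sense, establish
\[
\int w\,dd^c w\wedge T\le 0
\]
for $T$ a closed positive current built from functions in $\mathcal E$. This is the standard Cegrell integration by parts in $\mathcal E^0$, applied to the difference $w=\phi-u_j$ written as a difference of functions with equal boundary behaviour. I would use the Stokes formula of Lemma~\ref{lem: conservation mass} together with the truncation $\max(\cdot,-t)$.

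\textbf{Step 3 (passage to the limit).} Let $j\to\infty$ (and $t\to\infty$). The integrands $(\phi-u_j)$ increase to $\phi-u$ and the currents converge by Proposition~\ref{prop: convergence MA quasi-cont}. Use the monotone convergence in the weights, truncating $\min(\phi-u_j,M)$ to obtain bounded quasi-continuous functions, together with lower semicontinuity to get the inequality for $u$. Finiteness of the right-hand side $e_\phi(u)$ then propagates down the chain.

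The main obstacle is Step 3 combined with the justification of the integration by parts in Step 2 when $\phi$ is unbounded and maximal but not in $\mathcal E^0$. Specifically, the difficulty is controlling the boundary terms and the convergence of $\int(\phi-u_j)\,(dd^c u_j)^{p+1}\wedge(dd^c\phi)^{n-p-1}$ to the corresponding quantity for $u$, which requires an upper bound uniform in $j$. For the latter I would first try the monotonicity trick: compare $\max(u,\phi+v_j)\ge u$ via Theorem~\ref{comparaison principe in N}-type arguments, so that the energies of $u_j$ are bounded by those of $u$.
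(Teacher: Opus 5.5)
\textbf{There is a genuine gap.} Your formal computation is correct. With $w=\phi-u$ and $T_p=(dd^cu)^p\wedge(dd^c\phi)^{n-p-1}$, the difference of the two sides is $\int_\Omega w\,dd^cw\wedge T_p$, and this equals $-\int_\Omega dw\wedge d^cw\wedge T_p\le 0$ \emph{if} you may integrate by parts with no boundary term. But the proposal stops exactly where the proof has to happen, and neither of the two open steps is routine.

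In Step 2, neither $\phi$ nor $u_j=\max(u,\phi+v_j)$ belongs to $\mathcal E^0(\Omega)$, so Cegrell's integration by parts in $\mathcal E^0(\Omega)$ does not apply. The function $\phi$ may be unbounded, in particular from above near $\partial\Omega$. Mixed Monge--Amp\`ere currents involving $\phi$ need not have finite mass near $\partial\Omega$, even for bounded $\phi$ (cf.\ the computation of $(dd^c\phi)^{n-1}$ in Example \ref{example: E1}). So the fact that $0\le\phi-u_j\le -v_j\to 0$ at $\partial\Omega$ does not by itself kill the boundary term. Two smaller points: $u_j-\phi=\max(u-\phi,v_j)$ is not ``an $\mathcal E^0$ function near $\partial\Omega$'', and replacing $\phi$ by $\max(\phi,-t)$ changes the very current $(dd^c\phi)^{n-p}$ you are estimating.

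In Step 3, lower semicontinuity handles the left-hand side, but you also need the sharp bound
\[
\limsup_{j\to\infty}\int_\Omega(\phi-u_j)(dd^cu_j)^{p+1}\wedge(dd^c\phi)^{n-p-1}\le\int_\Omega(\phi-u)(dd^cu)^{p+1}\wedge(dd^c\phi)^{n-p-1}.
\]
This is a monotonicity statement of the same nature as the lemma itself. Lemma \ref{comparaison principe boundary value} only gives the bound $2^{p+2}\int_\Omega(\phi-u)(dd^cu)^{p+1}\wedge(dd^c\phi)^{n-p-1}$, which yields uniform boundedness but not the inequality. You also cannot borrow the paper's Theorem \ref{thm: integration by parts} or the monotonicity of $E_\phi$. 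The former is proved through Lemma \ref{lem: mixed energy}, whose Case 2 uses the present lemma, and the latter is derived from the former, so both routes are circular.

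The missing idea is to bypass integration by parts altogether. By the layer-cake formula,
\[
\int_\Omega(\phi-u)\,dd^c\phi\wedge T_p=\int_0^{+\infty}\Bigl(\int_{\{u<\phi-t\}}dd^c\phi\wedge T_p\Bigr)\,dt .
\]
Now apply the comparison principle, Theorem \ref{comparaison principe in N}, with a single $dd^c$ factor, $v=\phi$ and $T=T_p$ (applied to the pair $u\le\phi$ in $\mathcal N^a_\phi(\Omega)$). It gives $\int_{\{u<\phi-t\}}dd^c\phi\wedge T_p\le\int_{\{u<\phi-t\}}dd^cu\wedge T_p$ for every $t>0$. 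Integrating in $t$ gives the first inequality, and iterating in $p$ gives the chain.

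This is the paper's proof, and it has three advantages over your route. The generalized boundary condition is handled once and for all inside the comparison principle, via $h_k=P(\mathbf 1_{\Omega\setminus\Omega_k}(u-\phi))\nearrow 0$. No approximation of $u$ is needed. The inequality holds in $[0,+\infty]$ without knowing finiteness in advance. The same layer-cake argument is how the paper later proves $\int_\Omega(u-v)\,dd^c(v-u)\wedge T\ge 0$ (Proposition \ref{prop:positivity}), which is the rigorous form of your $-\int dw\wedge d^cw\wedge T_p\le 0$.
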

 \begin{proof} By Fubini's theorem
 \begin{flalign*}
 \int_{\Omega} (\phi-u) (dd^c u)^p \wedge (dd^c \phi)^{n-p}   
 = \int_{0}^{+\infty}  d t \int_{\{ u < \phi-t \}} dd^c \phi \wedge (dd^c u)^p \wedge (dd^c \phi)^{n-p-1} .
  \end{flalign*}
 By the comparaison principe Theorem \ref{comparaison principe in N}, it follows that
 \begin{flalign*}
 \int_{\Omega} (\phi-u) (dd^c u)^p \wedge (dd^c \phi)^{n-p}&  \leqslant   \int_{0}^{+\infty} d t \int_{\{u <\phi-t\}}  (dd^c u)^{p+1} \wedge (dd^c \phi)^{n-p-1}\\
 &= \int_\Omega (\phi-u) (dd^c u)^{p+1} \wedge (dd^c \phi)^{n-p-1}.
	\end{flalign*}
\end{proof}

\begin{lemma}\label{comparaison principe boundary value}
Let $v \in \mathcal{N}^a_\phi(\Omega)$ and $u \in PSH(\Omega)$  such that $v \leqslant u \leq \phi$ and 
$w_1, \cdots, w_{n-p} \in \mathcal E^a (\Omega)$ with $1 \leq p \leq n-1$. Let  $h: \mathbb R^+ \rightarrow \mathbb R^+$ be an increasing function.  Then  $u \in  \mathcal{N}^a_\phi(\Omega)$ and we have
$$
\int_{\Omega} h(\phi-u)(dd^c u)^p \wedge T \leqslant 2^p \int_{\Omega} h(2\phi-2v)(dd^cv)^p\wedge T,
$$
where $T = dd^c w_1 \wedge \cdots  dd^c w_{n-p}$ if $1 \leq p\leq n-1$ and $T = {\bf 1}_{\Omega}$ if $p=n$.

In particular if $v \in  \mathcal{E}_\phi^1(\Omega)$ then $u \in  \mathcal{E}_\phi^1(\Omega)$ and $e_\phi (u) \leq 2^{n+1} e_\phi(v).$
\end{lemma}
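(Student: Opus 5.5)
First, the membership $u\in\mathcal N^a_\phi(\Omega)$. Since $v\in\mathcal N_\phi(\Omega)$, there is $\psi\in\mathcal N_0(\Omega)$ with $\psi+\phi\le v\le u\le\phi$, so $u\in\mathcal E(\Omega)$ and $u\in\mathcal N_\phi(\Omega)$. To see that $(dd^c u)^n$ charges no pluripolar set, I would use the second half of the argument below with $h={\bf 1}$ restricted to sets, or argue directly: truncate by $\max(u,\phi-t)$ and compare with $\max(v,\phi-t)$ via the comparison principle. The mass of $(dd^c u)^n$ on $\{u\le\phi-t\}$ is then controlled by the mass of $(dd^c v)^n$ on $\{v\le \phi-t\}$, and the latter tends to $(dd^c v)^n(\{v=-\infty\})=0$ off $\{\phi=-\infty\}$. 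I expect this nonpluripolarity to be a technical nuisance, and I would handle it in the same way as the inequality.

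For the inequality, I would first reduce to $h$ right-continuous with $h(0)=0$; the general increasing case follows by monotone approximation and the constant part is trivial. Then by Fubini,
\[
\int_\Omega h(\phi-u)(dd^cu)^p\wedge T=\int_0^\infty \int_{\{u<\phi-t\}}(dd^cu)^p\wedge T\,dh(t).
\]
The key inclusion is the classical Cegrell trick: since $v\le u$,
\[
\{u<\phi-t\}\subset\{v<(u+\phi)/2-t/2\}\subset\{v<\phi-t/2\}.
\]
The function $w:=(u+\phi)/2$ lies in $\mathcal N^a_\phi(\Omega)$, since it is squeezed between $v$ and $\phi$ and its Monge–Ampère measure is nonpluripolar, with $(dd^cw)^p\wedge T\ge 2^{-p}(dd^cu)^p\wedge T$. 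Applying Theorem \ref{comparaison principe in N} to the pair $v\le w$ with $s=t/2$ gives
\[
2^{-p}\int_{\{u<\phi-t\}}(dd^cu)^p\wedge T\le\int_{\{v<w-t/2\}}(dd^cw)^p\wedge T\le \int_{\{v<w-t/2\}}(dd^cv)^p\wedge T\le\int_{\{v<\phi-t/2\}}(dd^cv)^p\wedge T.
\]
Integrating against $dh(t)$ turns the right-hand side into $\int_\Omega h(2(\phi-v))(dd^cv)^p\wedge T$, and multiplying through by $2^p$ yields the claimed estimate. The same comparison with $p=n$ applies as well.

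For the final statement, take $h(t)=t$ and $p=n$. This gives $e_\phi(u)\le 2^n\cdot 2\,e_\phi(v)=2^{n+1}e_\phi(v)$. The witness $v'\in\mathcal E^1(\Omega)$ with $v'+\phi\le v\le u\le\phi$ serves for $u$ as well, so $u\in\mathcal E^1_\phi(\Omega)$.

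The main obstacle is justifying that $w=(u+\phi)/2$, and $u$ itself, belong to $\mathcal N^a_\phi$, so that Theorem \ref{comparaison principe in N} applies, in particular the nonpluripolarity of $(dd^cu)^n$. If this fails, I would apply the comparison principle to the bounded-type truncations $\max(u,\phi-k)$ and $\max(v,\phi-k)$ and pass to the limit using Proposition \ref{prop: convergence MA quasi-cont}.
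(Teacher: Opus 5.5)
Your proof of the inequality matches the paper's: the layer-cake formula, the inclusions $\{u<\phi-t\}\subset\{v<\frac{u+\phi}{2}-\frac t2\}\subset\{v<\phi-\frac t2\}$, the bound $(dd^c u)^p\wedge T\le 2^p(dd^c\frac{u+\phi}{2})^p\wedge T$, and Theorem~\ref{comparaison principe in N} applied to $v\le\frac{u+\phi}{2}$. The deduction $e_\phi(u)\le 2^{n+1}e_\phi(v)$ is also the same.

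The genuine gap is the step you flag as the main obstacle: proving $u\in\mathcal E^a(\Omega)$, i.e.\ that $(dd^c u)^n$ charges no pluripolar set. None of your suggested arguments establishes this.
\begin{itemize}
\item Reusing ``the second half of the argument'' is circular, because Theorem~\ref{comparaison principe in N} needs $\frac{u+\phi}{2}\in\mathcal N^a_\phi(\Omega)$. The fallback through Proposition~\ref{prop: convergence MA quasi-cont} is circular for the same reason, since that proposition assumes the limits already lie in $\mathcal E^a(\Omega)$.
\item More fundamentally, the layer-cake argument is blind to $\{u=-\infty\}$. At such a point $v=-\infty$ and $\frac{u+\phi}{2}=-\infty$ as well, so the first inclusion fails exactly there. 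What your computation actually bounds is the mass of $(dd^c u)^p\wedge T$ on $\{u<\phi-t\}\cap\{u>-\infty\}$. So the estimate cannot rule out mass on $\{u=-\infty\}$, and conversely it only proves the lemma once that mass is known to vanish.
\item Your truncation sketch asserts without proof that the $(dd^cu)^n$-mass of $\{u\le\phi-t\}$ is controlled by the $(dd^cv)^n$-mass of $\{v\le\phi-t\}$. Comparing $\max(u,\phi-t)$ with $\max(v,\phi-t)$ only controls the Monge--Amp\`ere measures of the truncations. Relating those to $(dd^cu)^n$ on $\{u\le\phi-t\}$ needs a mass-conservation argument you do not supply.
\item When $\phi$ is unbounded, $\max(u,\phi-t)$ is itself unbounded, so whether it lies in $\mathcal E^a(\Omega)$ is the same question again. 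You also explicitly leave the pluripolar set $\{\phi=-\infty\}$ untreated.
\end{itemize}

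What is missing is the monotonicity of $\mathcal E^a$: if $v\le u$ with $u\in\mathcal E(\Omega)$ and $v\in\mathcal E^a(\Omega)$, then $u\in\mathcal E^a(\Omega)$. The paper obtains this in two moves. First, it uses the local nature of these classes to reduce to $v\in\mathcal F^a(\Omega)$, $u\in\mathcal F(\Omega)$, $v\le u$. Then it cites the characterization of $\mathcal F^a(\Omega)$ by finiteness of a weighted energy $\int_\Omega(-\chi\circ u)(dd^cu)^n$ with $\chi(-\infty)=-\infty$ \cite[Proposition 4.2, Corollary 4.4]{BGZ09}. Such weighted energies are monotone: in essence the estimate is proved for bounded approximants, where no pluripolar mass can hide, and then passed to the limit. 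Their finiteness forces $(dd^cu)^n(\{u=-\infty\})=0$. Alternatively, you could quote the known inequality ${\bf 1}_{\{u=-\infty\}}(dd^cu)^n\le{\bf 1}_{\{v=-\infty\}}(dd^cv)^n$ for $v\le u$ in $\mathcal E(\Omega)$, applied with $v$ (which lies in $\mathcal E^a$). Once $u\in\mathcal E^a(\Omega)$ is known, $\frac{u+\phi}{2}\in\mathcal N^a_\phi(\Omega)$ follows, and the rest of your argument goes through as written.
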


\begin{proof}
It is clear from the definition that $u \in \mathcal N_\phi(\Omega)$. We need to prove  that  $u \in \mathcal E^a(\Omega)$. We can assume that $v \in \mathcal F^a (\Omega), u \in \mathcal F (\Omega)$ and $v \leq u$.  Then by the characterization of the class $\mathcal F^a (\Omega)$ in terms of  finite  weighted energy, it follows that  $u  \in \mathcal F^a (\Omega)$ (see \cite[Proposition 4.2, Corollary 4.4]{BGZ09}.

We have
$$
I:= \int_{\Omega}	h(\phi-u)(dd^cu)^p\wedge T  = \int_{0}^{+\infty} d h(s)  \int_{\{u < \phi- s \}} (dd^cu)^p\wedge T.
$$
 Since $v \leq u$, we have $\{u < \phi- s\} \subset \{v < \frac{u+\phi}{2}- \frac{s}{2}\}$ for any $s > 0$ and then
\begin{flalign*}
 \int_{\{u < \phi- s\}} (dd^cu)^p\wedge T &\leqslant  \int_{\{v < \frac{u+\phi}{2} - \frac{s}{2}\}} (dd^cu)^p \wedge T\\
 &  \leqslant 2^p \int_{\{v < \frac{u+\phi}{2} - \frac{s}{2}\}} \left(dd^c (\frac{u+\phi}{2})\right)^p\wedge T.
\end{flalign*}
Observe that $u, \phi \in \mathcal N_{\phi}^a(\Omega)$ and  then $\frac{u+\phi}{2} \in \mathcal{N}^a_{\phi}(\Omega)$. By the comparison principle Theorem \ref{comparaison principe in N}, we have
\begin{flalign*}
	\int_{\{u < \phi- s\}} (dd^cu)^p\wedge (dd^c \phi)^{n-p} &\leqslant 2^p \int_{\{v < \frac{u+\phi}{2} - \frac{s}{2}\}} (dd^c v)^p\wedge T\\
	&\leqslant 2^p \int_{\{v-\phi < \frac{u-\phi}{2} - \frac{s}{2}\}} (dd^c v)^p\wedge T\\
	&\leqslant  2^p \int_{\{v-\phi < - \frac{s}{2}\}} (dd^c v)^p\wedge T.
\end{flalign*}
Hence 
\begin{flalign*}
	I &\leqslant  2^p\int_{0}^{+\infty} d h(s) \int_{\{v-\phi < - \frac{s}{2}\}} (dd^c v)^p\wedge T \\
	&\leqslant 2^p \int_{\Omega} h(2\phi-2v)(dd^cv)^p\wedge T.
\end{flalign*}
The last claim follows from the first one when $h(s) = s$ and $T = {\bf 1}_\Omega$.
\end{proof}

\begin{lemma}\label{lem: mixed energy inequality}
Let $u, v \in \mathcal{N}^a_\phi(\Omega)$ and 
$w_1, \cdots, w_{n-p} \in \mathcal E^a (\Omega)$ with $1 \leq p \leq n-1$. If $h: \mathbb R^+ \rightarrow \mathbb R^+$ be an increasing function then  we have
$$
\int_{\Omega} h(\phi-u)(dd^c v)^p \wedge T \leq \int_{\Omega} h(2\phi-2u)(dd^c u)^p\wedge T  +  \int_{\Omega} h(2\phi-2v)(dd^cv)^p\wedge T,
$$
where $T = dd^c w_1 \wedge \cdots  dd^c w_{n-p}$ if $1 \leq p\leq n-1$ and $T = {\bf 1}_{\Omega}$ if $p=n$.

In particular if $u, v \in  \mathcal{E}_\phi^1(\Omega)$ then
$$
\int_{\Omega} (\phi-u)(dd^c v)^n \leq 2 e_\phi (u) + 2 e_\phi (v).
$$
\end{lemma}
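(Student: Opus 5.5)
We argue through the layer-cake formula, exactly as in the proof of Lemma \ref{comparaison principe boundary value}, and split each level set into the part where $u$ is below $v$ and the part where $v$ is below $u$. Write
$$
I:=\int_\Omega h(\phi-u)(dd^c v)^p\wedge T=\int_0^{+\infty} dh(s)\int_{\{u<\phi-s\}}(dd^c v)^p\wedge T .
$$
For fixed $s>0$ we use the inclusion
$$
\{u<\phi-s\}\subset \Bigl\{u<\tfrac{v+\phi}{2}-\tfrac s2\Bigr\}\cup\Bigl\{v<\phi-\tfrac s2\Bigr\}.
$$
To check it, take a point of $\{u<\phi-s\}$ that is not in the second set, so $v\geq \phi-s/2$. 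Then $\frac{v+\phi}{2}-\frac s2\geq \phi-\frac{3s}{4}$. This is not quite $>\phi-s$ in general, so the inclusion must be verified carefully. Since $u<\phi-s$, we have $u<\phi-s<\phi-\frac{3s}{4}\le \frac{v+\phi}{2}-\frac s2$. Hence the point lies in the first set, and the inclusion holds.

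On the second set the bound is immediate:
$$
\int_{\{v<\phi-s/2\}}(dd^cv)^p\wedge T .
$$
On the first set we argue as in Lemma \ref{comparaison principe boundary value}. We have $(dd^c v)^p\wedge T\le 2^p (dd^c \frac{v+\phi}{2})^p\wedge T$, and $\frac{v+\phi}{2}\in\mathcal N^a_\phi(\Omega)$. The comparison principle, Theorem \ref{comparaison principe in N}, applied to $u$ and $\frac{v+\phi}{2}$ with shift $s/2$, bounds this piece by
$$
2^p\int_{\{u<\frac{v+\phi}{2}-\frac s2\}}(dd^c u)^p\wedge T\le 2^p\int_{\{u<\phi-s/2\}}(dd^c u)^p\wedge T,
$$
since $v\le\phi$.

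Integrating against $dh(s)$ and substituting $t=s/2$ gives bounds of the form $\int h(2\phi-2u)(dd^cu)^p\wedge T$ and $\int h(2\phi-2v)(dd^cv)^p\wedge T$, using $\int_0^\infty dh(s)\,{\bf 1}_{\{\phi-w>s/2\}}=h(2(\phi-w))-h(0)\le h(2\phi-2w)$.

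The main obstacle is the factor $2^p$ in front of the $u$-term, which the statement does not contain. One way to remove it is to bound $(dd^c v)^p\wedge T$ on the first set directly, without passing to the average. A cleaner choice is to apply the comparison principle to the pair $u$ and $v$ rather than $u$ and $\frac{v+\phi}{2}$, using the splitting
$$
\{u<\phi-s\}\subset\{u<v-s/2\}\cup\{v<\phi-s/2\},
$$
which holds because if $v\ge\phi-s/2$ then $u<\phi-s\le v-s/2$. Theorem \ref{comparaison principe in N} with shift $s/2$ then gives
$$
\int_{\{u<v-s/2\}}(dd^cv)^p\wedge T\le\int_{\{u<v-s/2\}}(dd^cu)^p\wedge T\le\int_{\{u<\phi-s/2\}}(dd^cu)^p\wedge T,
$$
with no constant. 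This is the route we adopt; the averaging argument is only a fallback.

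Summing the two contributions and integrating in $dh(s)$ yields the stated inequality. The particular case follows by taking $h(s)=s$ and $p=n$, $T={\bf 1}_\Omega$: we get $h(2\phi-2u)=2(\phi-u)$, so the right-hand side is $2e_\phi(u)+2e_\phi(v)$. Membership in $\mathcal N^a_\phi(\Omega)$ is automatic for functions in $\mathcal E^1_\phi(\Omega)$, since $\mathcal E^1(\Omega)\subset\mathcal N^a_0(\Omega)$ and Lemma \ref{comparaison principe boundary value} gives $\mathcal E^a$.
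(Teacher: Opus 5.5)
Your adopted route is exactly the paper's proof, and it is correct: split $\{u<\phi-s\}\subset\{u<v-s/2\}\cup\{v<\phi-s/2\}$, apply the comparison principle (Theorem \ref{comparaison principe in N}) to the pair $u,v$ with shift $s/2$, use $v\le\phi$, and integrate against $dh(s)$. The preliminary averaging argument with $\frac{v+\phi}{2}$ is unnecessary, since it only produces the spurious factor $2^p$, and can simply be dropped.
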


\begin{proof}
We can assume that the term on the right hand side is finite.
We have
$$
I:= \int_{\Omega}	h(\phi-u)(dd^c v)^p\wedge T  = \int_{0}^{+\infty} d h(s)  \int_{\{u < \phi- s \}} (dd^c v)^p\wedge T.
$$
 Since   $\{u < \phi- s\} \subset \{u  < v - \frac{s}{2}\} \cup \{v < \phi- \frac{s}{2}\}$ for any $s > 0$, we have
\begin{flalign*}
 \int_{\{u < \phi- s\}} (dd^c v)^p\wedge T &\leqslant  \int_{\{u  < v - \frac{s}{2}\}} (dd^c v)^p \wedge T +
 \int_{ \{v < \phi- \frac{s}{2}\} } (dd^c v)^p \wedge T.
\end{flalign*}
 By the comparison principle Theorem \ref{comparaison principe in N}, we have
\begin{flalign*}
	\int_{\{u < \phi- s\}} (dd^c v)^p\wedge T &\leq \int_{\{u  < v - \frac{s}{2}\}} (dd^c u)^p\wedge T + \int_{ \{v < \phi- \frac{s}{2}\} } (dd^c v)^p \wedge T.
\end{flalign*}
Since $v \leq \phi$, integrating this inequality with respect to $dh(s)$, we obtain
$$
\int_\Omega h(\phi-u) (dd^c v)^p\wedge T \leq \int_\Omega h(2\phi- 2 u) (dd^c u)^p\wedge T  + \int_\Omega h(2 \phi-2 v) (dd^c v)^p\wedge T.
$$
\end{proof}

\begin{lemma}\label{lem: mixed energy}
	Assume that $u,v \in \mathcal{E}_\phi^1(\Omega)$.  Then
 $$
 \int_{\Omega} (\phi-u)(dd^c v)^n \leqslant C(n,v) \max \left\{ e_\phi(u)^{\frac{1}{n+1}},  e_\phi(u)^{\frac{1}{2}} \right\},
 $$
 where $ e_\phi(u) := \int_{\Omega} (\phi - u)(dd^c u)^n$ and $C(n,v) := 2e_\phi(v) + n2^{n+1}$.
\end{lemma}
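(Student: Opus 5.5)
This is the classical Cegrell-type Hölder estimate, transplanted to the $\phi$-setting. The plan is to run a chain of $n$ integration-by-parts (or comparison) steps. Each step replaces one factor $dd^c v$ by $dd^c u$, at the cost of a square root.

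\textbf{Step 1: the basic Cauchy--Schwarz step.} Set $f := \phi - u \geq 0$ and $g := \phi - v \geq 0$. Fix $S := (dd^c v)^{k}\wedge (dd^c u)^{j}\wedge(dd^c\phi)^{n-1-k-j}$. Using $dd^c\phi \wedge$ (mixed terms) as a reference, the goal is
\[
\int_\Omega f\, dd^c v\wedge S \lesssim \int_\Omega f\,dd^c\phi\wedge S + \Big(\int_\Omega f\, dd^c u\wedge S\Big)^{1/2}\Big(\int_\Omega g\, dd^c v\wedge S\Big)^{1/2}.
\]
Formally this holds because
\[
\int f\, dd^c v\wedge S=\int f\, dd^c(v-\phi)\wedge S+\int f\,dd^c\phi\wedge S
=\int (\phi-v)\,dd^c(\phi - u)\wedge S+\dots .
\]
One then applies Cauchy--Schwarz to $\int d(u-\phi)\wedge d^c(v-\phi)\wedge S$. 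The justification is to approximate $u-\phi$ and $v-\phi$ by functions with zero boundary values, in the spirit of the sequence $h_k$ from the proof of Theorem \ref{comparaison principe in N}. Concretely, I would take $\max(u,\phi - t)$, which coincides with $\phi$ near the boundary in the generalized sense, together with Stokes' formula (Lemma \ref{lem: conservation mass}). Then pass to the limit with Proposition \ref{prop: convergence MA quasi-cont}.

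\textbf{Step 2: controlling the mixed terms.} The terms of the form $\int g\, (\cdots)$ and $\int f\,dd^c\phi\wedge S$ have to be bounded by $e_\phi(u)$, $e_\phi(v)$ and constants. Lemma \ref{lem:increasing} bounds the terms with $dd^c\phi$ replaced by $dd^c u$ by $e_\phi(u)$. Lemma \ref{lem: mixed energy inequality} (with $h(s)=s$, general $T$) bounds mixed terms with one weight and mixed currents by $2\int (\phi-u)(dd^c u)^p\wedge T + 2\int(\phi-v)(dd^cv)^p\wedge T$. I expect the constant $n2^{n+1}$ to come from these factor-$2$ losses accumulated over $n$ steps, and the additive $2e_\phi(v)$ to come from the $v$-energies.

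\textbf{Step 3: iteration.} Set $a_k := \int f\,(dd^c v)^{n-k}\wedge(dd^c u)^{k}$, so that $a_0$ is the target and $a_n = e_\phi(u)$. Step 1 gives a recursion of the shape $a_k \leq C(a_{k+1}^{1/2} + \dots)$. Iterating yields exponents $2^{-k}$, or, with the better Cegrell–Persson arrangement, a homogeneous bound producing $\max\{e^{1/(n+1)},e^{1/2}\}$. Distinguishing $e_\phi(u)\leq 1$ from $e_\phi(u)\geq 1$ then yields the stated $\max$. Specifically, when $e_\phi(u)\geq1$ the lowest power dominates with exponent $1/2$ from the first step. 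When $e_\phi(u)\le 1$, the worst exponent arises; here I would use the inequality in the H\"older form $a_0\le a_n^{1/(n+1)}\,(\ldots)^{n/(n+1)}$, as in Cegrell's $e(u,v)\le e(u)^{1/(n+1)}e(v)^{n/(n+1)}$ proof. The product is then bounded by sums via $xy\le x+y$ to reach $C(n,v)$ linear in $e_\phi(v)$.

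\textbf{Main obstacle.} I expect the main difficulty to be the rigorous integration by parts in Step 1. The functions $u-\phi$ are not psh, $\phi$ may be unbounded above, and the boundary vanishing holds only in the generalized sense. My first attempt would be to apply it to truncations $\max(u,\phi-t)$ and $\max(v,\phi-t)$ on $\Omega_\ell$. There $u_t-\phi$ is bounded, and the boundary terms are handled by the construction with $h_k = P(\mathbf 1_{\Omega\setminus\Omega_k}(u-\phi))\nearrow 0$.
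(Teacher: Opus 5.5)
Your plan (integration by parts followed by an iterated Cauchy--Schwarz scheme of Cegrell type) does not prove the statement as written. The gap is Step~3, which is asserted rather than carried out. The mechanism you invoke for the exponent $\frac{1}{n+1}$ fails when $\phi\not\equiv 0$.

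In Cegrell's case, set $a_k=\int(-u)(dd^cu)^k\wedge(dd^cv)^{n-k}$. The symmetry $\int(-u)\,dd^cv\wedge S=\int(-v)\,dd^cu\wedge S$ shows that the $v$-weighted factor produced by Cauchy--Schwarz is again $a_{k-1}$. Hence $a_k^2\le a_{k-1}a_{k+1}$, and log-convexity gives $\frac1{n+1}$.

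Here, with $f=\phi-u$ and $g=\phi-v$, the same computation only gives $\int f\,dd^cv\wedge S-\int f\,dd^c\phi\wedge S=\int g\,dd^cu\wedge S-\int g\,dd^c\phi\wedge S$. The $g$-weighted terms do not become small as $e_\phi(u)\to0$: for $u=\phi$ the defect is $\int g\,(dd^c\phi)^{k+1}\wedge(dd^cv)^{n-1-k}$, which is nonzero in general for $k\le n-2$. So only the $f$-factors supply smallness, one square root per level. Your recursion therefore yields only a power of order $e_\phi(u)^{2^{-n}}$ for small energy, as you yourself note. This is weaker than the claim once $n\ge 2$, and the constant $C(n,v)$ is not derived.

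Step~2 also has a flaw. Bounding $\int f\,dd^c\phi\wedge S$, or any term not under a square root, by Lemma~\ref{lem: mixed energy inequality} leaves an additive $2\int g(\cdots)$ term independent of $e_\phi(u)$. The resulting bound would not even tend to $0$ as $e_\phi(u)\to 0$.

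The integration by parts you need is also not available at this stage. The paper proves Theorem~\ref{thm: integration by parts} \emph{using} this lemma, to kill the boundary error $\varepsilon_k$. Moreover, truncating to $\max(u,\phi-t)$ only makes $u-\phi$ bounded, not equal to $0$ near $\partial\Omega$, so Stokes' formula does not dispose of the boundary terms.

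The missing idea is scaling, and no integration by parts is needed. The linear bound $\int(\phi-u)(dd^cv)^n\le 2e_\phi(u)+2e_\phi(v)$ of Lemma~\ref{lem: mixed energy inequality} becomes the desired estimate after rescaling $u$ along $\phi-u$, using the two homogeneities of $e_\phi$.

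\emph{Case $e_\phi(u)\le 1$.} Let $m=e_\phi(u)^{-1/(n+1)}\ge 1$ and $w=P(mu-(m-1)\phi)$. The contact set carries $(dd^cw)^n$, and on it one has $(dd^cw)^n\le m^n(dd^cu)^n$ and $\phi-w=m(\phi-u)$. Hence $e_\phi(w)\le m^{n+1}e_\phi(u)=1$. Since also $\phi-w\ge m(\phi-u)$ everywhere, applying Lemma~\ref{lem: mixed energy inequality} to $w,v$ gives $\int(\phi-u)(dd^cv)^n\le 2(1+e_\phi(v))\,e_\phi(u)^{1/(n+1)}$.

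\emph{Case $e_\phi(u)\ge 1$.} Let $m=e_\phi(u)^{-1/2}\le 1$. Because $(dd^c\phi)^n=0$, every term of $e_\phi(mu+(1-m)\phi)$ has the form $\binom nj m^{j+1}(1-m)^{n-j}\int(\phi-u)(dd^cu)^j\wedge(dd^c\phi)^{n-j}$ with $j\ge 1$, and $m^{j+1}\le m^2$. Lemma~\ref{lem:increasing} then gives $e_\phi(mu+(1-m)\phi)\le n2^n m^2 e_\phi(u)=n2^n$. Hence $\int(\phi-u)(dd^cv)^n\le(n2^{n+1}+2e_\phi(v))\,e_\phi(u)^{1/2}$.

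These two cases produce exactly the exponents $\frac1{n+1}$, $\frac12$ and the constant $C(n,v)$ of the statement.
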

\begin{proof} We consider two cases.

\noindent {\bf Case 1 :}	Assume that $e_\phi (u) \leqslant 1$ and take $m\geq 1$ such that $m^{n+1}\int_{\Omega} (\phi - u)(dd^cu)^n =1$. 
 Set 
 $$ 
 w = P(mu-(m-1)\phi).
 $$ 
 Then by the pluripotential comparison principle we have $(dd^c w)^n \leqslant m^n(dd^cu)^n$ on the contact set $\{w = m u - (m-1) \phi\}$.
It follows that  
 \begin{flalign*}
 \int_{\Omega} (\phi -w)(dd^cw)^n &= \int_{\Omega} (\phi -(mu-(m-1)\phi))(dd^cw)^n\\
 &\leqslant m^{n+1} \int_{\Omega} (\phi -u)(dd^cu)^n =1.
 \end{flalign*}
 By Lemma \ref{lem: mixed energy inequality}, we have
 \begin{flalign*}
 \int_{\Omega} (\phi-w)(dd^c v)^n & \leqslant 2 \int_{\Omega} (\phi -w)(dd^cw)^n + 2\int_{\Omega} (\phi -v)(dd^cv)^n   \leqslant 2(1 + e_\phi(v)).
 \end{flalign*}	
 Since $\phi - w \geq m(\phi - u)$ we deduce that
  $$
  m \int_{\Omega} (\phi -u)(dd^cv)^n \leqslant 2(1 + e_\phi(v)).
  $$
 Hence
 $$
  \int_{\Omega} (\phi -u)(dd^cv)^n \leqslant \frac{2(1 + e_\phi(v))}{m} = \frac{C_1(n,v)}{m} ,
 $$
 where $C_1(n,v) :=  2(1 + e_\phi(v))$.
	Thus
 \begin{equation}\label{eq:petite energy}
 \int_{\Omega} (\phi-u)(dd^c v)^n \leqslant C_1(n,v) \left( \int_{\Omega} (\phi-u)(dd^c u)^n \right)^{\frac{1}{n+1}}.
 \end{equation}
	
\noindent {\bf Case 2 : } We assume that $e_\phi (u) \geqslant 1$ and set $m := e_\phi (u)^{-\frac{1}{2}} \leqslant 1$. Then we have
 \begin{flalign*}
 e_\phi(mu + (1-m)\phi) &= m\int_{\Omega} (\phi-u)(dd^c(mu + (1-m)\phi))^n\\
 &= m\int_{\Omega} (\phi-u) \sum_{J=1}^{n} m^j (1-m)^{n-j}\binom{n}{j}(dd^cu)^j \wedge (dd^c\phi)^{n-j}\\
 &\leqslant 2^n m^2\int_{\Omega} (\phi-u) \sum_{J=1}^{n}(dd^cu)^j \wedge (dd^c\phi)^{n-j}\\
 &\leqslant n 2^n m^2\int_{\Omega} (\phi-u) (dd^cu)^n\\
 &\leqslant n 2^n.
 \end{flalign*}
 Then by Lemma \ref{lem: mixed energy inequality}, we have 
 $$
 m \int_{\Omega} (\phi -u)(dd^cv)^n \leqslant 2(n2^n  + e_\phi(v)).
 $$
 So 
 $$ 
 \int_{\Omega} (\phi -u)(dd^cv)^n \leqslant  \frac{C_2(n,v)}{m},
 $$
 where $C_2(n,v) := 2^{n+1}n + 2e_\phi(v)$.
 Thus 
 \begin{equation}\label{eq:grande energy}
 \int_{\Omega} (\phi-u)(dd^c v)^n \leqslant C_2(n,v) \left( \int_{\Omega} (\phi-u)(dd^c u)^n \right)^{\frac{1}{2}}.
 \end{equation}
	From  \eqref{eq:petite energy} and \eqref{eq:grande energy}, the result follows.
\end{proof}

\begin{lemma}\label{lem: convexity}
	If $u,v\in \mathcal E^1_{\phi}(\Omega)$ then $w := P(u,v)\in \mathcal E^1_{\phi}(\Omega)$. In particular $\mathcal E^1_{\phi}(\Omega)$ is a convex set. 
\end{lemma}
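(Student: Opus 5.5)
We need to show that $w = P(u,v) = P(\min\{u,v\})$ lies in $\mathcal E^1_\phi(\Omega)$. That means finding a function in $\mathcal E^1(\Omega)$ below $w-\phi$, checking $w\le\phi$, and bounding $e_\phi(w)$. Convexity is then deduced from the first part together with Lemma \ref{comparaison principe boundary value}.

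\textbf{Step 1: the lower bound.} By definition there are $\psi_1,\psi_2\in\mathcal E^1(\Omega)$ with $\psi_1+\phi\le u\le\phi$ and $\psi_2+\phi\le v\le\phi$. Take $\psi:=\psi_1+\psi_2\in\mathcal E^1(\Omega)$, which is a convex cone, and note $\psi\le\min\{\psi_1,\psi_2\}$. The function $\psi+\phi$ is psh and satisfies $\psi+\phi\le\min\{u,v\}$. Moreover $\psi+\phi\in\mathcal E^a(\Omega)$: it lies in $\mathcal N_\phi(\Omega)$, and its Monge--Amp\`ere measure charges no pluripolar set. This last point follows by expanding $(dd^c(\psi+\phi))^n$ into mixed terms, each of which puts no mass on pluripolar sets because $\psi\in\mathcal E^1$ and $\phi\in\mathcal E$. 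Hence Proposition \ref{prop:principe min}, with the bound $\psi+\phi$, gives $w\in\mathcal E^a(\Omega)$. Since $w$ is the envelope, $\psi+\phi\le w\le\min\{u,v\}\le\phi$, so $w\in\mathcal N^a_\phi(\Omega)$ with an $\mathcal E^1$ lower bound.

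\textbf{Step 2: the energy bound.} Proposition \ref{prop:principe min} gives
\[
(dd^c w)^n\le {\bf 1}_{\{u\le v\}}(dd^c u)^n+{\bf 1}_{\{u>v\}}(dd^c v)^n .
\]
By Lemma \ref{lem:MA contact set}, $(dd^c w)^n$ is carried by the contact set $\{w=\min(u,v)\}$. On $\{u\le v\}$ this set is $\{w=u\}$, where $\phi-w=\phi-u$; similarly on $\{u>v\}$ it is $\{w=v\}$, where $\phi-w=\phi-v$. Therefore
\[
e_\phi(w)=\int_\Omega(\phi-w)(dd^c w)^n\le\int_{\{u\le v\}}(\phi-u)(dd^c u)^n+\int_{\{u>v\}}(\phi-v)(dd^c v)^n\le e_\phi(u)+e_\phi(v)<\infty.
\]
The one delicate point is that $\phi-w$ must be read q.e.\ and the weight $\phi-w$ could be $+\infty$ where $\phi=-\infty$. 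This causes no trouble because all the measures involved are non-pluripolar and the integrands are nonnegative. Together with Step 1, this gives $w\in\mathcal E^1_\phi(\Omega)$.

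\textbf{Step 3: convexity.} Let $t\in[0,1]$ and set $z:=tu+(1-t)v$. Then $z$ is psh, $z\le\phi$, and $z\ge\min\{u,v\}\ge w$. By Lemma \ref{comparaison principe boundary value} applied with $w\in\mathcal E^1_\phi(\Omega)\subset\mathcal N^a_\phi(\Omega)$ in place of $v$, we get $z\in\mathcal E^1_\phi(\Omega)$ and $e_\phi(z)\le 2^{n+1}e_\phi(w)$. Hence $\mathcal E^1_\phi(\Omega)$ is convex.

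\textbf{Main obstacle.} The hard part is Step 1: justifying that $\psi+\phi$, and hence $w$, lies in $\mathcal E^a(\Omega)$ when $\phi$ may be unbounded, so that Proposition \ref{prop:principe min} and Lemma \ref{lem:MA contact set} apply. The approach is to work on relatively compact subdomains, where $\phi$ is bounded above, and use the locality of $\mathcal E$.
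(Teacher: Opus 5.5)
Your argument is correct and is essentially the paper's proof: the lower bound $\psi_1+\psi_2+\phi$ feeds Proposition \ref{prop:principe min}, the contact-set property gives $e_\phi(w)\le e_\phi(u)+e_\phi(v)$, and convexity follows from Lemma \ref{comparaison principe boundary value} applied to $tu+(1-t)v\ge P(u,v)$. Taking $\psi_1,\psi_2\in\mathcal E^1(\Omega)$, where the paper only writes $\mathcal N^a_0(\Omega)$, usefully makes explicit the $\mathcal E^1$ lower bound for $w$ that the definition of $\mathcal E^1_\phi(\Omega)$ requires; and the simplest justification that $\psi+\phi\in\mathcal E^a(\Omega)$ is that $(dd^c\phi)^n=0$ already puts $\phi$ in $\mathcal E^a(\Omega)$, which is a convex cone.
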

\begin{proof}
We will apply  the minimum principle Proposition \ref{prop:principe min}.  By definition there exists $u_0, v_0 \in \mathcal N_0 (\Omega)^a$ such that $u_0 + \phi \leq u \leq \phi$ and  $v_0 + \phi \leq v \leq \phi$ in $\Omega$. 
 Then we have $w_0:= u_0 + v_0 \in \mathcal N_0 (\Omega)^a$ and $w_0  \leq \inf \{u_0,v_0\}$.
 Hence $\psi := w_0 + \phi \in \mathcal N^a_{\phi}(\Omega)$ and satisfies $\psi  \leq \inf \{u,v\}$.
 By Proposition \ref{prop:principe min}, it follows that  $w := P(u,v)\in \mathcal N^a_{\phi}(\Omega)$.  It remains to show that $\int_{\Omega} (\phi-w) (dd^c w)^n < + \infty$.
 
 Indeed by Proposition \ref{prop:principe min} we have 
 $$
 (dd^c w)^n \leq {\bf 1}_{\{ u \leq v \}} (dd^c u)^n + {\bf 1}_{\{u > v\}} (dd^c v)^n.
 $$
 Then  since $(dd^c w)^n$ is carried on the contact set $\{w = \inf\{u,v\}\}$ we have
	\begin{flalign*}
		\int_{\Omega} (\phi-w) (dd^c w)^n & \leq \int_{\Omega} (\phi-u) (dd^c u)^n + \int_{\Omega} (\phi-v) (dd^c v)^n<+ \infty. 
	\end{flalign*}
	As a consequence $w\in \mathcal E^1_{\phi}(\Omega)$. For any $t\in [0,1]$, we have $tu+(1-t)v \geq P(u,v)$, hence $tu+(1-t)v$ also belongs to $\mathcal E^1_{\phi}(\Omega)$ by Lemma \ref{comparaison principe boundary value}. 
\end{proof}

\subsection{Integration by parts}

\begin{theorem}\label{thm: integration by parts}
	Let $u_1,u_2,v_1,v_2, w_1, \cdots, w_{n-1} \in \mathcal E^1_{\phi}(\Omega)$ and $T=dd^c w_1 \wedge ... \wedge dd^c w_{n-1}$. Then 
	\begin{equation}
		\label{eq: integration by parts}
		\int_{\Omega}(u_1-u_2)dd^c(v_1-v_2) \wedge T = \int_{\Omega}(v_1-v_2)dd^c(u_1-u_2) \wedge T.
	\end{equation}
\end{theorem}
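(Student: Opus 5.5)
By bilinearity, the identity \eqref{eq: integration by parts} reduces to the symmetric statement
\[
\int_\Omega (u-\phi)\, dd^c(v-\phi)\wedge T=\int_\Omega (v-\phi)\, dd^c(u-\phi)\wedge T, \qquad u,v\in \mathcal E^1_\phi(\Omega).
\]
To see this, write $u_1-u_2=(u_1-\phi)-(u_2-\phi)$ and $v_1-v_2=(v_1-\phi)-(v_2-\phi)$, and expand. All resulting terms are of the form $\int(\phi-u)(dd^c v)\wedge T$ or $\int(\phi-u)\,dd^c\phi\wedge T$.

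\textbf{Step 1: finiteness.} I will first check that every integral appearing is absolutely convergent. Polarize $T$: $dd^c w_1\wedge\cdots\wedge dd^c w_{n-1}$ is a combination of $(dd^c W)^{n-1}$ with $W$ convex combinations of the $w_j$, and $W\in\mathcal E^1_\phi(\Omega)$ by Lemma \ref{lem: convexity}. Then set $S=\frac{1}{n}(u+v+(n-1)W)\in\mathcal E^1_\phi(\Omega)$. We have $dd^c v\wedge T\le C(dd^c S)^n$, and similarly with $v$ replaced by $\phi$, since $dd^c\phi\le n\, dd^c S'$ for $S'=(\phi+(n-1)W)/n$. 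Lemma \ref{lem: mixed energy} (and Lemma \ref{lem:increasing} for the terms involving $\phi$) then gives $\int(\phi-u)\,dd^c v\wedge T<\infty$. So both sides of \eqref{eq: integration by parts} are well defined.

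\textbf{Step 2: approximation.} I will reduce to functions with genuinely zero boundary values relative to $\phi$. Take a fundamental sequence $\Omega_k\nearrow\Omega$ and truncations of the type used in the proof of Theorem \ref{comparaison principe in N}:
\[
u^k:=\max\bigl(u,\ \phi+h_k^u\bigr), \qquad h_k^u:=P\bigl(\mathbf 1_{\Omega\setminus\Omega_k}(u-\phi)\bigr).
\]
Since $h_k^u\nearrow 0$ quasi-everywhere, this gives $u^k=\phi$ near $\partial\Omega$ up to $h_k$. A cleaner choice is $u^{k,t}:=\max(u,\phi-t\,\cdot)$, glued so that $u^{k}=\phi$ on $\Omega\setminus\Omega_{k}$ and $u^k\searrow u$. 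For functions equal to $\phi$ outside a compact set $K\Subset\Omega$, the differences $u^k-\phi$ and $v^k-\phi$ are compactly supported. In that case the identity follows from the classical Stokes formula applied to bounded truncations $\max(\cdot,-t)$, via Lemma \ref{lem: conservation mass}, after letting $t\to\infty$ by monotone convergence on a neighborhood of $K$.

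\textbf{Step 3: passing to the limit.} I will pass to the limit $k\to\infty$ using Proposition \ref{prop: convergence MA quasi-cont}. The integrands $u^k-\phi$ are quasi-continuous but unbounded, so I will cut them off as $\max(u^k-\phi,-s)$, which are bounded and converge in capacity. The tails are then controlled uniformly in $k$ by Lemma \ref{lem: mixed energy inequality} with $h(s)=s\,\mathbf 1_{s>s_0}$. The uniform energy bound $e_\phi(u^k)\le 2^{n+1}e_\phi(u)$ comes from Lemma \ref{comparaison principe boundary value}, since $u\le u^k\le\phi$.

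\textbf{Main obstacle.} I expect the main difficulty to be this tail control together with local-versus-global convergence. Weak convergence only holds on compact subsets, so one must show that $\int_{\Omega\setminus\Omega_\ell}(\phi-u^k)\,dd^c v^k\wedge T$ is small uniformly in $k$. I would obtain this by bounding it through Lemma \ref{lem: mixed energy inequality} by energies of $P(\mathbf 1_{\Omega\setminus\Omega_\ell}(u-\phi))+\phi$, which tend to $0$ as $\ell\to\infty$ by the argument of Theorem \ref{comparaison principe in N}.
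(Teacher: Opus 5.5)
\textbf{There is a genuine gap: your Step 2 reduces to a case that carries no information.} Let $w$ be psh with $w\ge u$ for some $u\in\mathcal E^1_\phi(\Omega)$, and $w=\phi$ on $\Omega\setminus K$ with $K\Subset\Omega$.
\begin{itemize}
\item Maximality of $\phi$, applied on a domain $D$ with $K\Subset D\Subset\Omega$, gives $w\le\phi$. Hence $w\in\mathcal E^1_\phi(\Omega)$ by Lemma \ref{comparaison principe boundary value}.
\item Lemma \ref{lem: conservation mass} gives $\int_{\Omega''}(dd^c w)^n=\int_{\Omega''}(dd^c\phi)^n=0$ on a neighbourhood $\Omega''\Subset\Omega$ of $K$. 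Since $w=\phi$ on the open set $\Omega\setminus K$, this yields $(dd^c w)^n=0$ on $\Omega$.
\item Proposition \ref{prop:uniqueness}, applied to the pair $(w,\phi)$, then forces $w\equiv\phi$.
\end{itemize}
So approximants with $u^k=\phi$ on $\Omega\setminus\Omega_k$ and $u^k\searrow u$ do not exist unless $u=\phi$. The \emph{compactly supported case} you check by Stokes is only $u=v=\phi$, and Step 3 has nothing to pass to the limit from. Your first candidate $\max(u,\phi+h^u_k)$ does exist. However, it coincides with $u$, not with $\phi$, near $\partial\Omega$, and it \emph{increases to $\phi$} rather than decreasing to $u$.

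\textbf{The missing idea is to put the compact support on $u_k-u$, not on $u-\phi$.} Take $u_k$ equal to $u$ near $\partial\Omega$ with $u_k\nearrow\phi$; this is the object the paper uses.
\begin{itemize}
\item Write $\phi-u=(\phi-u_k)+(u_k-u)$.
\item Apply Lemma \ref{lem: integration by parts equality near boundary} to the compactly supported piece $u_k-u$, first for functions with $u-\phi$ bounded.
\item Show that the error $\int_\Omega(\phi-u_k)\,dd^c(\phi-v)\wedge T+\int_\Omega(\phi-v)\,dd^c(u_k-\phi)\wedge T$ tends to $0$.
\end{itemize}
The first error term vanishes by dominated convergence. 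The second, after one more integration by parts against $v_j-v$, is controlled through Lemma \ref{lem: mixed energy} by $e_\phi(u_k)$.

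The heart of the proof is therefore $e_\phi(u_k)=\int_\Omega(\phi-u_k)(dd^c u_k)^n\to 0$. This is Lemma \ref{lem: energy convergence increasing}, proved by induction on the number of factors $dd^c u_k$, integrating by parts against $u_k-u$ and using $(dd^c\phi)^n=0$. Your \emph{Main obstacle} paragraph essentially assumes this fact, and neither justification you give supplies it:
\begin{itemize}
\item The argument of Theorem \ref{comparaison principe in N} only gives $h_\ell\nearrow 0$ quasi-everywhere. That says nothing about Monge--Amp\`ere mass of $\phi+h_\ell$ near $\partial\Omega$, where $-h_\ell\ge\phi-u$ is not small.
\item With $h(s)=s$, Lemma \ref{lem: mixed energy inequality} only bounds your tail by $2e_\phi(\phi+h_\ell)+2e_\phi(v^k)$, which does not tend to $0$. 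The rescaling in Lemma \ref{lem: mixed energy} is what turns small energy into a small mixed integral.
\end{itemize}
Your value truncation $\max(\cdot,\phi-s)$, with tails controlled by Lemmas \ref{lem: mixed energy inequality} and \ref{comparaison principe boundary value}, is fine and matches the paper's second step.
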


To prove the integration by parts in this general form we need several intermediate results. 

\begin{lemma}\label{lem: integration by parts equality near boundary}
	With the assumptions of Theorem \ref{thm: integration by parts} we assume also that $u_1=u_2$ near the boundary of $\Omega$ and all functions are relatively bounded with respect to $\phi$: $u_j,v_k,w_l \geq \phi-C$. Then \eqref{eq: integration by parts} holds. 
\end{lemma}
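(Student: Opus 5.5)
The plan is to reduce the identity to the classical Stokes theorem for bounded psh functions on a relatively compact subdomain, where the boundary term vanishes. Since $u_1=u_2$ on $\Omega\setminus K$ for some compact $K$, the function $f:=u_1-u_2$ is supported in $K$. Both sides of \eqref{eq: integration by parts} are then integrals over a fixed neighbourhood of $K$ once we know that $(v_1-v_2)\,dd^c(u_1-u_2)\wedge T$ is carried by $K$. This holds on the open set $\Omega\setminus K$ by Lemma \ref{lem: conservation mass}, which gives equality of the measures $dd^c u_1\wedge T$ and $dd^c u_2\wedge T$ there; that lemma is applied to small open sets, or via locality of the operator in the plurifine topology. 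Fix a strictly pseudoconvex smooth domain $D$ with $K\Subset D\Subset\Omega$. The identity becomes
$$\int_D f\, dd^c g\wedge T=\int_D g\, dd^c f\wedge T,\qquad g:=v_1-v_2 .$$

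The main issue is unboundedness, since $\phi$ may be $-\infty$ somewhere and is unbounded. I will handle it by truncation relative to $\phi$. The hypothesis $u_j,v_k,w_l\ge \phi-C$ means every function differs from $\phi$ by a bounded amount. Hence $f$ and $g$ are bounded on $D$, with $|f|,|g|\le C$, and are quasi-continuous.

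The first step is to truncate: for $t>0$ set $u_j^t=\max(u_j,-t)$, and similarly $v_k^t,w_l^t$, $T_t$. These are bounded psh functions on a neighbourhood of $\overline D$. Since $\phi$ is bounded above on $\overline D$, on the set $\{\phi>-t+C\}$ none of the truncations is active, so $u_1^t-u_2^t=f$ and $v_1^t-v_2^t=g$ there. For the truncated bounded functions, Bedford--Taylor theory gives
$$\int_D (u^t_1-u^t_2)\,dd^c(v^t_1-v^t_2)\wedge T_t=\int_D (v^t_1-v^t_2)\,dd^c(u^t_1-u^t_2)\wedge T_t .$$
This uses Stokes' theorem once the boundary terms vanish. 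To make them vanish I need $u_1^t=u_2^t$ near $\partial D$, which follows from $u_1=u_2$ there. The left side is handled by writing $dd^c(v_1^t-v_2^t)$ against a function compactly supported in $D$.

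The second step is to let $t\to\infty$. The differences $u_1^t-u_2^t$ and $v_1^t-v_2^t$ are uniformly bounded by $C$ and converge in capacity to $f$ and $g$, being equal to them off $\{\phi\le -t+C\}$, whose capacity tends to $0$. Proposition \ref{prop: convergence MA quasi-cont} then gives weak convergence of $(u_1^t-u_2^t)\,dd^c v_k^t\wedge T_t$ to $f\,dd^c v_k\wedge T$, and similarly for the other terms, after expanding by multilinearity. The functions involved lie in $\mathcal E^a$ locally: $\phi+\mathcal E^1$-type minorants have non-pluripolar mixed measures by Lemma \ref{comparaison principe boundary value}.

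The main obstacle is that weak convergence on $D$ only gives convergence against compactly supported continuous test functions, while our integrands are bounded by $C$ but have support in $K$ on one side and not on the other. To fix this I insert a cutoff $\chi\in C_c^\infty(D)$ with $\chi\equiv1$ near $K$. On the left, $f=\chi f$ is compactly supported. On the right, I use $\chi g$, justified because for large $t$ the measure $dd^c(u_1^t-u_2^t)\wedge T_t$ vanishes outside $K$. It vanishes there because $u_1^t=u_2^t$ outside $K$.

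The remaining difficulty is the convergence in Proposition \ref{prop: convergence MA quasi-cont}. That proposition needs decreasing sequences and mass control on $\mathrm{supp}\,\chi$. Both are available: $u^t\searrow u$, and local masses are bounded via the $\mathcal F^a$ envelopes used in its proof. Passing to the limit gives the identity.
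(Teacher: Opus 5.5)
Your argument is correct. It has the same skeleton as the paper's proof: approximate by nicer functions, apply a classical integration by parts with no boundary term, then pass to the limit with Proposition \ref{prop: convergence MA quasi-cont}. The difference is the choice of approximants.

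The paper mollifies all functions, $u_{p,j}=u_p\star\chi_j$ on a subdomain $\Omega_2$, so the identity for the approximants is plain Stokes' theorem for smooth forms. The price is modest:
\begin{itemize}
\item the approximants exist only on $\Omega_2$;
\item the support of $u_{1,j}-u_{2,j}=(u_1-u_2)\star\chi_j$ grows slightly;
\item convergence in capacity of the differences must come from the general fact that decreasing sequences of psh functions converge in capacity.
\end{itemize}

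You instead truncate by $\max(\cdot,-t)$, which has its own advantages:
\begin{itemize}
\item the approximants are globally defined;
\item $u_1^t=u_2^t$ off the same compact $K$;
\item the uniform bound is immediate from $|\max(a,-t)-\max(b,-t)|\le |a-b|$;
\item convergence in capacity is immediate, because $u_1^t-u_2^t=u_1-u_2$ off $\{\phi<C-t\}$.
\end{itemize}
The cost is that your approximants are only bounded, not smooth. You must therefore invoke the Bedford--Taylor integration by parts for bounded psh functions with one compactly supported difference. That result is itself proved by regularization, i.e.\ by essentially the paper's argument in the bounded case. Your route is thus a reduction to that special case rather than a self-contained argument, and it matches the truncation device the paper uses elsewhere (Lemma \ref{lem: conservation mass}, Step 2 of Theorem \ref{thm: integration by parts}).

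Both routes need the same two auxiliary facts.
\begin{itemize}
\item All functions and their approximants must lie in $\mathcal E^a$, so that Proposition \ref{prop: convergence MA quasi-cont} applies.
\item $dd^c(u_1-u_2)\wedge T$ must vanish on $\Omega\setminus K$, so that the right-hand side can be localized with a cutoff. The paper uses this silently when it states the final identity on $\Omega_2$.
\end{itemize}
For the second fact, Euclidean locality of the operator is the cleanest justification. Lemma \ref{lem: conservation mass} as stated only compares masses of open sets containing $\Omega'$. It does give local equality on any open $U\Subset\Omega\setminus K$ if you apply it to a neighbourhood $\Omega''$ of $K$ disjoint from $\overline U$ and to $\Omega''\cup U$, and subtract.
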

\begin{proof}
	Let $\Omega_1\Subset \Omega_2\Subset \Omega$ be open sets such that $u_1=u_2$ in $\Omega\setminus \Omega_1$. We approximate the functions $u_j,v_k,w_l$ by taking convolutions with smooth mollifiers $(\chi_j)$. When $j$ is sufficiently large, for each $p = 1; 2$,  the approximants  $u_{p,j}=u_p\star \chi_{j}$ are defined in $\Omega_2$, and the functions $u_{1,j}-u_{2,j}$ are compactly supported in $\Omega_2$. By Stokes theorem we have 
	\[
	\int_{\Omega_2}(u_{1,j}-u_{2,j}) dd^c (v_{1,j}-v_{2,j}) \wedge T_j =\int_{\Omega_2}(v_{1,j}-v_{2,j}) dd^c (u_{1,j}-u_{2,j}) \wedge T_j.
	\]
	Observe that $u_{1,j}-u_{2,j}$ and $v_{1,j}-v_{2,j}$ are uniformly bounded, quasi-continuous and they converge in capacity to $u_1-u_2$ and $v_1-v_2$ respectively.  We thus obtain by letting $j\to +\infty$ and using Proposition \ref{prop: convergence MA quasi-cont},
	\[
	\int_{\Omega_2}(u_{1}-u_{2}) dd^c (v_{1}-v_{2}) \wedge T =\int_{\Omega_2}(v_{1}-v_{2}) dd^c (u_{1}-u_{2}) \wedge T.
	\]
\end{proof}

The next result we need is the following. A more general version of this will be proved later. 

\begin{lemma}\label{lem: energy convergence increasing}
	Let $u\in \mathcal E^1_{\phi}(\Omega)$, $w_1, \cdots, w_{n-p} \in \mathcal E^1_{\phi} (\Omega)$ and $T=dd^c w_1 \wedge ... \wedge dd^c w_{n-p}$ when $1 \leq p \leq n -1$ and $T = {\bf 1}_\Omega$ when $p = n$. Assume $(u_j)$ is a sequence in $\mathcal E^1_{\phi}(\Omega)$ increasing to $\phi$. Assume also that  for each $j$, $u_j=u$ near $\partial \Omega$, and all the functions $u_j-\phi$, $w_j-\phi$ are bounded.  Then 
	\begin{equation}
		\label{eq: energy convergence increasing}
	\lim_{j\to +\infty}	\int_{\Omega}(\phi-u) (dd^c u_j)^p \wedge T = \int_{\Omega}(\phi-u) (dd^c \phi)^p \wedge T.
	\end{equation}
	In particular, for $p=n$, we have 
	\begin{equation}
		\label{eq: dominant energy convergence increasing}
	\lim_{j\to +\infty}	\int_{\Omega}(\phi-u_j) (dd^c u_j)^n = 0. 
	\end{equation}
\end{lemma}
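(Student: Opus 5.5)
The plan is to reduce \eqref{eq: energy convergence increasing} to weak convergence of the measures $(dd^c u_j)^p\wedge T$ on a fixed compact set, together with an exact mass identity that controls what happens near the boundary. Fix $\Omega'\Subset\Omega$ open, with $u_j=u$ on $\Omega\setminus\Omega'$ for all $j$. I read the hypothesis as holding uniformly in $j$; this is natural, since otherwise one would replace $u_j$ by $\max(u_j,u)$ on a fixed neighbourhood. Near $\partial\Omega$, that is on $\Omega\setminus\overline{\Omega'}$, all the integrands coincide with $(\phi-u)(dd^c u)^p\wedge T$. So the main task is to compare $\int_{\Omega''}(\phi-u)(dd^c u_j)^p\wedge T$ with $\int_{\Omega''}(\phi-u)(dd^c\phi)^p\wedge T$ for some $\Omega'\Subset\Omega''\Subset\Omega$. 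The comparison cannot simply be replaced by the boundary contribution, because $\phi\ne u$ near $\partial\Omega$. The difficulty is that $(dd^c\phi)^p\wedge T$ and $(dd^c u)^p\wedge T$ differ near $\partial\Omega$.

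The convergence step will be handled through integration by parts rather than directly. Write
\[
\int_\Omega(\phi-u)\big[(dd^c u_j)^p-(dd^c\phi)^p\big]\wedge T=\sum_{k=0}^{p-1}\int_\Omega(\phi-u)\,dd^c(u_j-\phi)\wedge S_{j,k},
\]
where $S_{j,k}:=(dd^c u_j)^k\wedge(dd^c\phi)^{p-1-k}\wedge T$. Formally, integration by parts moves $dd^c$ onto $\phi-u$, giving $\int(u_j-\phi)\,dd^c(\phi-u)\wedge S_{j,k}$. This quantity is at most $\int(\phi-u_j)\,dd^c u\wedge S_{j,k}$ in absolute value, up to a term $\int(\phi-u_j)\,dd^c\phi\wedge S_{j,k}$. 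Both terms are integrals of the bounded, nonnegative function $\phi-u_j$, which decreases to $0$. Against positive measures whose total mass is uniformly controlled (compare Lemma~\ref{lem: mixed energy inequality}), they tend to $0$ by a monotone/convergence-in-capacity argument: apply Proposition~\ref{prop: convergence MA quasi-cont} on compacts, and on the collar use $\phi-u_j=\phi-u$ together with the equal-mass statement from Lemma~\ref{lem: conservation mass}.

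To justify the integration by parts, I would apply Lemma~\ref{lem: integration by parts equality near boundary} to the pair $(u_j,\phi)$ rather than to $(\phi,u)$. The reason is that $u_j-\phi$ is not compactly supported, while $u_j-u$ is. So I would instead write $u_j-\phi=(u_j-u)+(u-\phi)$. The $(u_j-u)$ part is compactly supported and bounded, so Lemma~\ref{lem: integration by parts equality near boundary} applies to it, once $u$ has been truncated to $\max(u,\phi-C)$, which agrees with $u$ for $C$ large off a set of small mass. The $(u-\phi)$ part yields $\int(\phi-u)\,dd^c(u-\phi)\wedge S_{j,k}$, which does not depend on $j$ except through $S_{j,k}$; it is handled by induction on $k$, using the same decomposition one level down.

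The main obstacle is this bookkeeping. The boundary terms must be shown to cancel exactly, with finiteness guaranteed by Lemma~\ref{lem:increasing} and Lemma~\ref{lem: mixed energy inequality}, and the truncation constant $C$ must be removed using the boundedness of $u_j-\phi$ and $w_l-\phi$. For \eqref{eq: dominant energy convergence increasing} with $p=n$, I would note that $0\le\phi-u_j\le\phi-u$. Then $\int(\phi-u_j)(dd^c u_j)^n\le\int(\phi-u_j)\,dd^c u_j\wedge(dd^c u_j)^{n-1}$, and the same integration-by-parts scheme bounds this by integrals of $\phi-u_j$ against measures dominated by mixed energies of $u$ and $\phi$, which tend to $0$ by monotone convergence since $(dd^c\phi)^n=0$.
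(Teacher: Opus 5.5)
Your integration-by-parts skeleton is essentially the paper's. You split $u_j-\phi=(u_j-u)+(u-\phi)$ and apply Lemma~\ref{lem: integration by parts equality near boundary} to the compactly supported piece $u_j-u$. The $(u-\phi)$ piece needs no induction: $\int_\Omega(\phi-u)\,dd^c(u-\phi)\wedge S$ is trivially symmetric, and the two pieces recombine into $\int_\Omega(u_j-\phi)\,dd^c(\phi-u)\wedge S$.

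The genuine gap is in showing that the error terms $\int_\Omega(\phi-u_j)\,dd^c\phi\wedge S_{j,k}$ and $\int_\Omega(\phi-u_j)\,dd^cu\wedge S_{j,k}$ tend to $0$. Your argument rests on reading the hypothesis uniformly in $j$, and that reading is inconsistent. If $u_j=u$ on a fixed collar $\Omega\setminus\Omega'$ and $u_j\nearrow\phi$, then $u=\phi$ on that collar, contradicting your own remark that $\phi\neq u$ near $\partial\Omega$. Replacing $u_j$ by $\max(u_j,u)$ also changes nothing, since $u\le u_j$. In the application ($u_k=P(\mathbf 1_{\Omega\setminus\Omega_k}u)$ in the proof of Theorem~\ref{thm: integration by parts}), the set where $u_j=u$ shrinks to $\partial\Omega$. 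So there is no fixed compact set off which the integrands agree. The real danger is that the $j$-dependent measures $dd^c\phi\wedge S_{j,k}$ carry $(\phi-u)$-weighted mass between a fixed compact set and the $j$-th collar, where $\phi-u_j$ is not small. Uniform bounds on such weighted masses (which Lemmas~\ref{lem: mixed energy inequality} and \ref{comparaison principe boundary value} do give) do not yield convergence to $0$; you need tightness near $\partial\Omega$, uniformly in $j$. Proposition~\ref{prop: convergence MA quasi-cont} only acts on compact sets and is stated for decreasing sequences. Lemma~\ref{lem: conservation mass} only equates unweighted masses on a fixed $\Omega''$. Neither supplies this tightness.

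The missing idea is to extract this control from the statement itself, by induction on $p$. Fix $m$; for $j\ge m$ one has $\phi-u_j\le\phi-u_m$, and $u_j=u_m$ near $\partial\Omega$. So the statement at level $p-1$, applied with $u_m$ in place of $u$ and $T$ replaced by $dd^c\phi\wedge T$, gives
\[
\limsup_{j\to+\infty}\int_\Omega(\phi-u_j)\,dd^c\phi\wedge(dd^cu_j)^{p-1}\wedge T\le\int_\Omega(\phi-u_m)\,(dd^c\phi)^{p}\wedge T .
\]
The right-hand side tends to $0$ as $m\to+\infty$ by dominated convergence. The same argument works with $dd^cu\wedge T$ in place of $dd^c\phi\wedge T$. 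The level-$(p-1)$ statement also handles the $j$-dependence of the main term $\int_\Omega(\phi-u)\,dd^c\phi\wedge(dd^cu_j)^{p-1}\wedge T$. In your telescoped form you would need this device at every level $k\le p-1$, with $T'$ equal to $(dd^c\phi)^{p-k}\wedge T$ or $dd^cu\wedge(dd^c\phi)^{p-1-k}\wedge T$. That is an induction on $p$ for the error terms, not an induction on $k$ for the $(u-\phi)$ piece.

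Finally, \eqref{eq: dominant energy convergence increasing} needs no separate scheme:
\[
\int_\Omega(\phi-u_j)(dd^cu_j)^n\le\int_\Omega(\phi-u)(dd^cu_j)^n\to\int_\Omega(\phi-u)(dd^c\phi)^n=0 .
\]
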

\begin{proof}
The last statement follows from the first one since $(dd^c \phi)^n=0$ and $u\leq u_j\leq \phi$. We first assume that $u-\phi,w_j-\phi$ are uniformly bounded. 

	We prove the first statement by induction on $p$. For $p=0$, there is nothing to prove. We assume the statement is true for $0\leq p-1< n$ and we prove that it is still true for $p$. Setting $S_j:= (dd^c u_j)^{p-1}\wedge T$ and using integration by parts (since the support of $u_j-u$ is compact), we have
	\begin{flalign*}
		&\int_{\Omega} (\phi-u) (dd^c u_j) \wedge S_j = \int_{\Omega} (\phi-u) (dd^c (u_j-u) + dd^c u) \wedge S_j  \\
		& = \int_{\Omega} (u_j-u) dd^c (\phi-u) \wedge S_j  +  \int_{\Omega} (\phi-u) dd^c u \wedge S_j\\
		&= \int_{\Omega} (\phi-u) dd^c (\phi-u) \wedge S_j -\int_{\Omega} (\phi-u_j) dd^c (\phi-u) \wedge S_j +  \int_{\Omega} (\phi-u) dd^c u \wedge S_j\\
			& = \int_{\Omega} (\phi-u) dd^c \phi \wedge S_j - \int_{\Omega} (\phi-u_j) dd^c (\phi-u) \wedge S_j.
	\end{flalign*}
	By the induction hypothesis, we have 
	\[
	\lim_j \int_{\Omega} (\phi-u) dd^c \phi \wedge S_j =  \int_{\Omega} (\phi-u)  \wedge (dd^c \phi)^{p} \wedge T. 
	\]
	For fixed $k\in \mathbb N$ and for $j\geq k$, we also have 
	\begin{flalign*}
	 \int_{\Omega} (\phi-u_j) dd^c \phi \wedge S_j  \leq  \int_{\Omega} (\phi-u_k) dd^c \phi \wedge S_j  
	\end{flalign*}
	Since $u_j=u_k$ near $\partial \Omega$, we can use the induction hypothesis to get 
	\begin{flalign*}
	\limsup_{j} \int_{\Omega} (\phi-u_j) dd^c \phi \wedge S_j  & \leq  \limsup_j \int_{\Omega} (\phi-u_k) dd^c \phi \wedge S_j  \\
	&= \int_{\Omega} (\phi-u_k) (dd^c \phi)^{p} \wedge T. 
	\end{flalign*}
	Letting $k\to +\infty$ and using the dominated convergence theorem, we arrive at 
	\[
	\limsup_{j} \int_{\Omega} (\phi-u_j) dd^c \phi \wedge S_j  =0. 
	\]
	The same argument also yields 
	\[
	\limsup_{j} \int_{\Omega} (\phi-u_j) dd^c u \wedge S_j  =0,
	\]
	finishing the proof in the first case. 
	
	To prove the general case we consider the canonical approximants and argue exactly as in the second step of the proof of Theorem \ref{thm: integration by parts}. 
\end{proof}

We are now ready to prove Theorem \ref{thm: integration by parts}. 
\begin{proof}[Proof of Theorem \ref{thm: integration by parts}]
We proceed in two steps. 

{\bf Step 1.}
	We  assume first that $u_l,v_l,w_j \geq \phi-C$, for some constant $C>0$ and proceed in several steps.   
	
By bilinearity, we can assume that $u=u_1$, $\phi=u_2=v_2$, $v=v_1$ and prove that
 $$
 \int_{\Omega} (\phi-u) dd^c (\phi-v) \wedge T = \int_{\Omega} (\phi-v) dd^c (\phi-u) \wedge T.
 $$
 This numbers may be infinite. We will assume that $I:=  \int_{\Omega} (\phi-u) dd^c (\phi-v) \wedge T$ is finite and prove the equality.
We let $\Omega_k$ be a sequence of relatively compact subsets of $\Omega$ that increase to $\Omega$, and define for $k \in \N$
 $$
  u_{k}:= P({\bf 1}_{\Omega\setminus \Omega_k} u), \, \, u_k:=P({\bf 1}_{\Omega\setminus \Omega_k}v).
 $$ 
 Observe that $u_k \nearrow \widetilde u = \phi$ and $v_k \nearrow \widetilde v = \phi$ as $k \to + \infty$.
	
We have  for fixed $k \in \N$, 
 \begin{flalign*}
 \int_{\Omega} (\phi-u) dd^c (\phi-v) \wedge T & = \int_{\Omega} (u_k-u+ \phi-u_k) dd^c (\phi-v)\wedge T  \\
 & = \int_{\Omega}(\phi-u_k) dd^c (\phi-v) \wedge T + \int_{\Omega} (u_k-u) dd^c (\phi-v)\wedge T.
 \end{flalign*}
 Applying the first step we have for a fixed $k\in \N$,
 \begin{equation} \label{eq:symmetry}
  \int_{\Omega} (u_k-u) dd^c (\phi-v)\wedge T = \int_{\Omega} (\phi-v) dd^c (u_k-u)\wedge T.
 \end{equation}
 Hence 
 \begin{flalign*}
 \int_{\Omega} (\phi-u) dd^c (\phi-v) \wedge T & = \int_{\Omega}(\phi-u_k) dd^c (\phi-v) \wedge T + \int_{\Omega} (\phi-v) dd^c (u_k-u)\wedge T\\
 & = \int_{\Omega} (\phi-v) dd^c (\phi-u) \wedge T + \varepsilon_k,
 \end{flalign*}
 where 
 $$\varepsilon_k := \int_{\Omega}(\phi-\phi_k) dd^c (\phi-v) \wedge T + \int_{\Omega} (\phi-v) dd^c (u_k-\phi) \wedge T.
 $$
 It remains to show that $\varepsilon_k \to 0$ as $k\to +\infty$. By the dominated convergence theorem the first term defining $\varepsilon_k$ converges to $0$. To treat the second term  we observe that for fixed $k \in \N$, the dominated convergence theorem gives  
 $$
 \int_{\Omega}  (\phi-v) dd^c (u_k-\phi) \wedge T =\lim_j  \int_{\Omega} (v_j-v) dd^c (u_k -\phi)\wedge T.
 $$
 Since $v_j=v$ near $\partial \Omega$, we can use Lemma \ref{lem: integration by parts equality near boundary} to get 
 \[
 \int_{\Omega} (v_j-v) dd^c (u_k -\phi)\wedge T = \int_{\Omega} (u_k-\phi) dd^c (v_j-v)\wedge T. 
 \]
By Lemma \ref{lem: mixed energy}, it remains to prove that 
\[
\lim_k \int_{\Omega} (\phi-u_k) (dd^c u_k)^n = 0.
\]
Since $u_k=u$ near $\partial \Omega$, the last convergence follows from Lemma \ref{lem: energy convergence increasing}, finishing the proof of the first step. 

{\bf Step 2.}   
We now treat the general case by considering $u^t:= \max(u,\phi-t)$. It suffices to prove that 
 \begin{flalign*}
 \int_{\{u\leq \phi-t\}}  (\phi-u^t) dd^c v^t \wedge T \leq \delta(t),
 \end{flalign*}
	with $\delta(t)\to 0$ as $t\to +\infty$.
 By convexity Corollary \ref{monotonicity-convexity}, $\psi:= (w_1+ \cdots +w_{n-1})\slash (n-1) \in \mathcal E^1_{\phi} (\Omega)$ and we have $T \leq S := c_n (dd^c \psi)^{n-1}$.
 
 Now take a convex increasing function $h : \mathbb R^+ \rightarrow \mathbb R^+$ such that $h(t)/t \to +\infty$ as $t\to +\infty$ and
 \[
M:= \int_{\Omega} h(2(\phi-u)) dd^c u \wedge S + \int_{\Omega} h(2(\phi-v)) dd^c v \wedge S <+\infty,
	\] 
 Observe that  
\begin{flalign*}
\int_{\{u\leq \phi-t\}}  (\phi-u^t) dd^c v^t \wedge S &= \frac{t}{h(t)} \int_\Omega h (\phi-u^t) dd^c v^t \wedge S.
\end{flalign*}
Since $\lim_{t \to +\infty} \frac{t}{h(t)} = 0$, it is enough to show that 
it is enough to prove that the function
  $t \longmapsto \int_{\Omega} h(\phi-u^t) dd^c v^t \wedge S$ is bounded in $t > 0$.
 
 Applying Lemma \ref{lem: mixed energy inequality} we obtain for ay $t >0$,
$$
\int_\Omega h (\phi-u^t) dd^c v^t \wedge S \leq \int_\Omega h (2(\phi-u^t)) dd^c u^t \wedge S
+  \int_\Omega h (2(\phi-v^t)) dd^c v^t \wedge S.
$$
Applying Lemma \ref{comparaison principe boundary value} we finally get for any $t >0$,
\begin{flalign*}
\int_\Omega h (\phi-u^t) dd^c v^t \wedge S  & \leq 2^{p}  \int_\Omega h (2(\phi-u)) dd^c u \wedge S 
 +  2^p \int_\Omega h (2(\phi-v)) dd^c v \wedge S
 \leq 2^p M,
 \end{flalign*}
 which proves our claim and finishes the proof.
 \end{proof}
 
\begin{prop} \label{prop:positivity}
 Let  $u, v, w_1, \cdots, w_{n-1} \in \mathcal E^{1}_{\phi}(\Omega)$ and $T:=dd^c w_1 \wedge ... \wedge dd^c w_{n-1}$.  Then 
 \begin{equation}
 \label{eq: integration by parts bis}
  \int_{\Omega}(u-v)dd^c(v-u) \wedge T \geq 0. 
 \end{equation}
 \end{prop}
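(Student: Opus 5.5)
The plan is to reduce \eqref{eq: integration by parts bis} to a smooth computation in which the integrand becomes $d(u-v)\wedge d^c(u-v)\wedge T\geq 0$. The first step is a reduction to functions relatively bounded with respect to $\phi$. Set $u^t:=\max(u,\phi-t)$, $v^t:=\max(v,\phi-t)$ and $w_l^t:=\max(w_l,\phi-t)$. These lie in $\mathcal E^1_\phi(\Omega)$ by Lemma \ref{comparaison principe boundary value}. Expanding by bilinearity,
\[
\int_\Omega (u-v)\,dd^c(v-u)\wedge T=\int_\Omega (\phi-u)\,dd^c(u-v)\wedge T-\int_\Omega(\phi-v)\,dd^c(u-v)\wedge T,
\]
and each of the four resulting terms $\int(\phi-a)\,dd^c b\wedge T$ with $a,b\in\{u,v\}$ is finite. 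Finiteness follows from Lemma \ref{lem: mixed energy inequality} together with Lemma \ref{lem:increasing}, after bounding $T$ by $c_n(dd^c\psi)^{n-1}$ with $\psi$ the average of the $w_l$, as in Step 2 of the proof of Theorem \ref{thm: integration by parts}. Then the tail estimate of that Step 2 (the convex weight $h$ with $h(t)/t\to\infty$) shows that each of these terms for $u^t,v^t,w^t$ converges to the corresponding term for $u,v,w$ as $t\to\infty$. So it suffices to prove the inequality when all functions are $\geq \phi-C$.

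The second step is a further reduction to $u=v$ near $\partial\Omega$. Following Step 1 of the proof of Theorem \ref{thm: integration by parts}, I would use the envelopes $u_k:=P({\bf 1}_{\Omega\setminus\Omega_k}u)$, or rather functions of the form $\max(u,\,v_k)$ and $\max(v,\,u_k)$, which coincide with $u$ (resp.\ $v$) near $\partial\Omega$ and increase to $\phi$. The cleanest device is to write $u-v=(u-\phi)-(v-\phi)$ and expand the quadratic form
\[
Q(f,g):=\int_\Omega f\,dd^c g\wedge T .
\]
By Theorem \ref{thm: integration by parts}, $Q$ is symmetric on differences of elements of $\mathcal E^1_\phi(\Omega)$. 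It is also bilinear, and the error terms coming from replacing $u$ by a function agreeing with $v$ near the boundary are controlled exactly as the $\varepsilon_k$ in that proof. Their vanishing uses Lemma \ref{lem: mixed energy} and Lemma \ref{lem: energy convergence increasing}. After this, $f:=u-v$ is bounded with compact support.

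The final step handles compactly supported bounded $f=u-v$. Take convolutions $u_j,v_j,T_j$ on a neighbourhood $\Omega_2$ of the support, exactly as in Lemma \ref{lem: integration by parts equality near boundary}. Stokes gives
\[
\int_{\Omega_2}(u_j-v_j)\,dd^c(v_j-u_j)\wedge T_j=\int_{\Omega_2} d(u_j-v_j)\wedge d^c(u_j-v_j)\wedge T_j\geq 0,
\]
since $T_j$ is a smooth positive closed $(n-1,n-1)$-form. Passing to the limit with Proposition \ref{prop: convergence MA quasi-cont} (the $u_j-v_j$ are uniformly bounded, quasi-continuous, and converge in capacity) gives the inequality. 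The main obstacle is the second step: making sure the cross terms produced by modifying $u,v$ near the boundary vanish in the limit. I would handle this by reusing verbatim the $\varepsilon_k\to0$ argument, applied to each of the four bilinear pieces.
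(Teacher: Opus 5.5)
Your outline works, but it takes a genuinely different and much heavier route than the paper.

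\textbf{The paper's argument.} The paper never mollifies or localizes near $\partial\Omega$. It first treats $u\geq v$, where the layer-cake formula gives
\[
\int_\Omega (u-v)\,dd^c(v-u)\wedge T=\int_0^{+\infty}dt\int_{\{v<u-t\}}dd^c(v-u)\wedge T,
\]
and each inner integral is nonnegative by the comparison principle (Theorem \ref{comparaison principe in N} with $p=1$). For general $u,v$ it sets $w:=\max(u,v)\in\mathcal E^1_\phi(\Omega)$ and expands $(u-w+w-v)\,dd^c(v-w+w-u)\wedge T$. The two diagonal terms are $\geq 0$ by the first case, applied to the pairs $(w,u)$ and $(w,v)$. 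The two cross terms vanish by the maximum principle: $u-w$ is supported in $\{u<v\}$, where $dd^c w\wedge T=dd^c v\wedge T$, and symmetrically for $w-v$.

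\textbf{Your argument.} You rerun the integration-by-parts machinery (truncation, envelopes, symmetry from Theorem \ref{thm: integration by parts}, mollification and Stokes) to exhibit the quantity as a limit of $\int df\wedge d^cf\wedge T_j$. This makes the positivity transparent. It is also the natural path to, e.g., a Cauchy--Schwarz inequality for the bilinear form. The cost is that it depends on Theorem \ref{thm: integration by parts} and Lemma \ref{lem: energy convergence increasing}, and needs three separate limit passages (in $t$, in $k$, and in the mollification). The paper needs only the comparison and maximum principles.

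\textbf{A slip in your second step.} The functions $\max(u,v_k)$ and $\max(v,u_k)$ both equal $\max(u,v)$ near $\partial\Omega$ and both increase to $\phi$. So they approximate neither $u$ nor $v$, and are useless here. What works is the split $u-v=f_k+g_k$, with
\[
f_k:=(u-u_k)-(v-v_k),\qquad g_k:=(u_k-\phi)-(v_k-\phi),
\]
where $f_k$ is bounded with compact support. Then $Q(f,f)=Q(f_k,f_k)+2Q(g_k,f_k)+Q(g_k,g_k)$. Symmetry is needed to put $g_k$ in the function slot of the cross term, because $f_k\to u-v$ is not small.

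With that done, $|Q(g_k,f_k)|\leq\int_\Omega\bigl((\phi-u_k)+(\phi-v_k)\bigr)\,dd^c(u+v+u_k+v_k)\wedge T\to 0$. To see this, use $dd^c a\wedge T\leq \frac{n^n}{n!}(dd^c\psi)^n$ with $\psi:=(a+w_1+\cdots+w_{n-1})/n$ for $a\in\{u,v,u_k,v_k\}$. Then apply Lemma \ref{lem: mixed energy}, where $e_\phi(\psi)$ is bounded uniformly in $k$ by Lemma \ref{comparaison principe boundary value}, and Lemma \ref{lem: energy convergence increasing}, which gives $e_\phi(u_k),e_\phi(v_k)\to 0$. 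The term $Q(g_k,g_k)$ is handled the same way.
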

 \begin{proof}
First observe that the integral $I(u,v,T) := \int_{\Omega}(u-v)dd^c(v-u) \wedge T$ is finite.
Indeed by bilinearity, it's enough to show that the integrals like  
$I (\phi,u,v) = \int(\phi-u) dd^c v \wedge T$ are finite. To prove this claim observe that  Lemma \ref{lem: convexity} $w := (v + w_1 + \cdots + w_{n-1})\slash n \in  \mathcal E^{1}_{\phi} (\Omega)$ and then $I (\phi,u,v) \leq c_n \int_\Omega (\phi - u) (dd^c w)^n$ which is finite by Lemma \ref{lem: mixed energy inequality}.

To prove the positivity we argue in two steps.

We first assume $u\geq v$. Then 
 \begin{flalign*}
 \int_{\Omega}(u-v)dd^c(v-u) \wedge T  &=  \int_0^{+\infty} dt \int_{\{v<u-t\}}dd^c (v-u) \wedge T \geq 0 \\
 & \leq  \int_0^{+\infty} dt \int_{\{v<u-t\}}dd^c v \wedge T - \int_0^{+\infty} dt \int_{\{v<u-t\}} 
 dd^c u \wedge T \geq 0, 
 \end{flalign*}
	by the comparison principle Proposition \ref{comparaison principe in N}. 
	
	For the general case we set $w=\max\{u,v\}$ and use the first step to get
	\begin{flalign*}
		\int_{\Omega}(u-v)dd^c(v-u) \wedge T & =\int_{\Omega}(u-w+ w-v)dd^c(v-w+w-u) \wedge T\\
		& \geq \int_{\Omega}(u-w)dd^c(v-w) \wedge T + \int_{\Omega}(w-v)dd^c(w-u) \wedge T.
	\end{flalign*}
	Each term in the last line is $0$ by the pluripotential maximum principle. 
\end{proof}

%\begin{corollary} Let $u, v \in \mathcal N(\Omega,\phi)$ such that $v \leq u$ and $\psi \in PSH(\Omega) \cap L^{\infty}$. Then for any $k \geq 1$ 
%$$
%\int_\Omega (v-u)^{n + k} (dd^c \psi)^n  \leq c_{n,k}  \Vert \psi\Vert_{\infty}^n\int_\Omega (v-u)^{k} (dd^c u)^n 
%$$
%\end{corollary}
%\begin{proof} Assume first that $u $

%\end{proof}
\begin{prop} \label{prop:uniqueness}
	Let $u, v \in \mathcal{E}_\phi^1(\Omega)$ such that $(dd^c u)^n \leq (dd^c v)^n$ on the Borel set $\{u \leq v\}$. Then $u \geqslant v$.
\end{prop}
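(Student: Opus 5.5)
We want to show $v\le u$ everywhere, which amounts to showing that the set $\{u<v\}$ is negligible. The plan is the classical Cegrell-type argument, built on the comparison principle (Theorem \ref{comparaison principe in N}) and on the integration by parts of Theorem \ref{thm: integration by parts}.

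First we reduce to the case $v\le u$ replaced by a maximum. Set $w:=\max(u,v)$. By Lemma \ref{comparaison principe boundary value}, $w\in\mathcal E^1_\phi(\Omega)$, since $u\le w\le\phi$. By the maximum principle (Theorem \ref{thm:principe du max}), applied with all functions in $\mathcal E^a$,
\[
(dd^c w)^n\ge {\bf 1}_{\{u\ge v\}}(dd^c u)^n+{\bf 1}_{\{u<v\}}(dd^c v)^n .
\]
Combining this with the hypothesis $(dd^c u)^n\le(dd^c v)^n$ on $\{u\le v\}$ gives $(dd^c w)^n\ge (dd^c u)^n$ on $\Omega$. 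The inequality is immediate on $\{u>v\}$ and on $\{u<v\}$. On the contact set $\{u=v\}$ we use ${\bf 1}_{\{u=v\}}(dd^c w)^n\ge{\bf 1}_{\{u=v\}}(dd^c u)^n$, which follows from the last assertion of Theorem \ref{thm:principe du max} because $u\le w$ and $\{u=v\}=\{u=w\}$. So it suffices to prove the following claim: if $u\le w$ lie in $\mathcal E^1_\phi(\Omega)$ and $(dd^c u)^n\le (dd^c w)^n$, then $u=w$.

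To prove the claim, we start from
\[
0\le\int_\Omega (w-u)\big((dd^c w)^n-(dd^c u)^n\big)\le 0 ?
\]
Only the first inequality is immediate from the hypothesis. For the reverse direction we expand
\[
(dd^c w)^n-(dd^c u)^n=dd^c(w-u)\wedge T,\qquad T=\sum_{j=0}^{n-1}(dd^c w)^j\wedge(dd^c u)^{n-1-j}.
\]
Proposition \ref{prop:positivity}, applied to each of the $n$ terms, gives $\int(w-u)dd^c(w-u)\wedge T_j\le 0$. Hence the integral is both $\ge0$ and $\le 0$, so it vanishes, and each term vanishes as well. In particular $\int_\Omega (w-u)\,dd^c(u-w)\wedge (dd^c u)^{j}\wedge(dd^c w)^{n-1-j}=0$ for every $j$. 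All these integrals are finite by the finiteness statement proved at the start of Proposition \ref{prop:positivity}.

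It remains to pass from these vanishing energies to $u=w$, and this is the main obstacle. The standard route is a Cegrell/B{\l}ocki-type induction. Using Theorem \ref{thm: integration by parts} and Cauchy--Schwarz for the positive semidefinite form $(f,g)\mapsto\int f\,dd^c(-g)\wedge S$, with $f=w-u$, one shows successively that
\[
\int (w-u)\,dd^c(w-u)\wedge (dd^c \phi)^{k}\wedge (dd^c u)^{n-1-k}=0 ,
\]
replacing one factor $u$ by $\phi$ at each step. The replacement is controlled because $\phi-u$ is again a difference of functions in $\mathcal E^1_\phi$. Ending with $(dd^c\phi)^{n-1}$, we then compare with $\int (w-u)(dd^c(\phi+\rho))^n$, where $\rho$ is the strictly psh exhaustion from the beginning of Section \ref{sect: preliminaries}; we replace $\phi$ by $\phi+\epsilon\rho$ if needed, noting that $\phi+\epsilon\rho\in\mathcal E^1_\phi(\Omega)$ since $\rho\in\mathcal E^0\subset\mathcal E^1$. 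This yields $\int (w-u)(dd^c\rho)^n\wedge\ldots=0$, hence $w=u$ almost everywhere with respect to a measure dominating Lebesgue measure, and therefore everywhere since both functions are psh. The delicate point is justifying the Cauchy--Schwarz step with unbounded $\phi$; for that we would again truncate by $\max(\cdot,\phi-t)$, as in Step 2 of the proof of Theorem \ref{thm: integration by parts}.
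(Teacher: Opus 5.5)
Your first two steps are sound. The reduction to $u\le w:=\max(u,v)$ with $(dd^c u)^n\le (dd^c w)^n$ is correct. The identity $\{u=v\}=\{u=w\}$ is false, but you do not need it, since the displayed maximum-principle inequality already covers $\{u\ge v\}$. Proposition \ref{prop:positivity} then does give $\int_\Omega (w-u)\,dd^c(w-u)\wedge (dd^c w)^j\wedge(dd^c u)^{n-1-j}=0$ for all $j$.

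The gap is the last step, which carries all the content.
\begin{itemize}
\item With $S$ fixed, Theorem \ref{thm: integration by parts} and Proposition \ref{prop:positivity} give Cauchy--Schwarz for the integrated form $(f,g)\mapsto\int f\,dd^c(-g)\wedge S$. But this never lets you change the current $S$. B{\l}ocki's step needs gradient currents such as $df\wedge d^c g\wedge dd^c u\wedge S'$ of unbounded functions, and a pointwise Cauchy--Schwarz against positive currents. None of this is available in the paper.
\item More seriously, your endgame is false as stated: $\phi+\epsilon\rho$ need not lie in $\mathcal E^1_\phi(\Omega)$, and this is exactly what Example \ref{example: E1} shows. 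For $n\ge 4$ the smooth strictly psh function $\rho=\|z\|^2-1\in\mathcal E^0(\B^n)$ satisfies $\int(-\rho)\,dd^c\rho\wedge(dd^c\phi)^{n-1}=+\infty$. Replacing $\rho$ by $\epsilon\rho$ only multiplies this by $\epsilon^2$, so $e_\phi(\phi+\epsilon\rho)=+\infty$ for every $\epsilon>0$. Hence the Dirichlet-type integrals you must control when inserting $dd^c(\phi+\epsilon\rho)$ diverge.
\item Stopping at $(dd^c\phi)^{n-1}$ carries no information, since it vanishes for $\phi\equiv 0$.
\item In any case, the chain would yield a vanishing Dirichlet integral, not $\int(w-u)(dd^c\rho)^n=0$.
\end{itemize}

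The missing idea is to leave the finite-energy class rather than stay in it. The paper sets $h:=P(\min(u-v,0))$. Since $v\le\phi$, one has $P(u-\phi)\le h\le 0$, so $h\in\mathcal E^1(\Omega)$. By Lemma \ref{lem:MA contact set}, $(dd^c h)^n$ is carried by $\{v+h=u\}\subset\{u\le v\}$. The maximum principle applied to $v+h\le u$, together with the hypothesis, gives
\[
{\bf 1}_{\{v+h=u\}}\big((dd^c v)^n+(dd^c h)^n\big)\le {\bf 1}_{\{v+h=u\}}(dd^c u)^n\le {\bf 1}_{\{v+h=u\}}(dd^c v)^n .
\]
Hence $(dd^c h)^n=0$, so $h\equiv 0$ by Cegrell's uniqueness in $\mathcal E^1(\Omega)$, i.e. $u\ge v$.

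Alternatively, your perturbation idea does work if you feed it into Theorem \ref{comparaison principe in N} instead. That result only requires $\mathcal N^a_\phi(\Omega)$, and this class is stable under adding $\epsilon\rho$ with $\rho\in\mathcal E^0(\Omega)$, $(dd^c\rho)^n=dV$, because no energy condition is imposed. For $s>0$ set $E_s:=\{u<v+\epsilon\rho-s\}\subset\{u\le v\}$. Then
\[
\int_{E_s}(dd^c v)^n+\epsilon^n{\rm Vol}(E_s)\le\int_{E_s}(dd^c u)^n\le\int_{E_s}(dd^c v)^n .
\]
All terms are finite because $\phi-u>s$ on $E_s$. Hence ${\rm Vol}(E_s)=0$, and letting $s,\epsilon\to 0$ gives $u\ge v$.
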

When $\phi$ is bounded from above, the result was known (see e.g.  \cite[Corollary 3.2]{ACCP09}, \cite{ACLR25}). The general case can be proved similarly as soon as we have the Projection Theorem. For completeness we include it below.  
\begin{proof} 
  By definition $\psi:= P(u-\phi) \in \mathcal E^1(\Omega)$ and $ \psi \leq u - v$. It then follows that  $w = P(\min(u-v,0)) \in \mathcal{E}^1(\Omega)$ and $\psi \leq w \leq 0$.  Since $v+w \leq u$ in $\Omega$, from the pluripotential maximum principle we obtain 
	$$\mathbf{1}_{\{v+w = u\}} (dd^c(v+w))^n \leq \mathbf{1}_{\{v+w =  u\}} (dd^cu)^n,$$
	hence
	$$\mathbf{1}_{\{v+w = u\}} (dd^cv)^n + \mathbf{1}_{\{v+w  = u\}} (dd^cw)^n \leq  \mathbf{1}_{\{v+w = u\}} (dd^cv)^n.$$
	So $$\mathbf{1}_{\{v+w = u\}} (dd^cw)^n = 0.$$
	Since $(dd^c w)^n$ is concentrated on the contact set, it follows that  $(dd^c w)^n = 0$, hence  $w=0$ by uniqueness in $\mathcal E^1(\Omega)$ (see \cite{Ceg98}).  Therefore $u \geq v$ in $\Omega$.
 \end{proof}

\subsection{The $\phi$-energy functional}
\,\\
Let $\Omega$ be a bounded hyperconvex domain in $\mathbb C^n$, $\phi \in \mathcal M(\Omega)$. We study the Dirichlet problem 
\[
(dd^c u)^n =  \mu, \; u \in \mathcal E^1_\phi(\Omega)
\]
using the variational approach. We define the   $\phi$-energy functional
$$E_{\phi}(u)= \frac{1}{n+1} \sum_{p=0}^n \int_{\Omega} (\phi-u) (dd^c u)^p\wedge (dd^c \phi)^{n-p}.$$
It follows from  Lemma \ref{lem:increasing} that $E_{\phi}(u) < +\infty$ when $u \in  \mathcal E^1_\phi(\Omega)$. Indeed  for $1 \leq p \leq n-1$, we have 
$$
\int_{\Omega} (\phi-u) (dd^c u)^p\wedge (dd^c \phi)^{n-p} \leq  \int_{\Omega} (\phi-u) (dd^c u)^n < \infty,
$$
and then 

$$
\frac{1}{n +1} \int_{\Omega} (\phi-u) (dd^c u)^n \leq E_\phi (u) \leq \int_{\Omega} (\phi-u) (dd^c u)^n.
$$
It turns out that the functionnal $- E_\phi $ is the primitive of the Monge-Ampère operator on the space $\mathcal E^1_\phi(\Omega)$ satisfying $E_\phi(\phi) = 0$.
\begin{lemma}
	Let $u,v \in \mathcal E^1_\phi(\Omega)$ and $u_t = t u + (1-t)v$ for $t \in [0, 1]$. Then $u_t \in \mathcal E^1_\phi(\Omega)$ and we have for $t \in [0,1]$,
 \begin{equation} \label{first derivative}
 \frac{dE_{\phi}(u_t)}{dt}= \int_{\Omega} (v-u)(dd^c u_t)^n,
 \end{equation}
 and 
  \begin{equation} \label{second derivative}
 \frac{d^2E_{\phi}(u_t)}{dt^2}= \int_{\Omega} (v-u) dd^c (u - v)\wedge (dd^c u_t)^{n-1}. 
 \end{equation}
\end{lemma}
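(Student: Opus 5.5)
The first claim, $u_t \in \mathcal E^1_\phi(\Omega)$, is the convexity statement of Lemma \ref{lem: convexity}. The derivative formulas come from treating $E_\phi(u_t)$ as a polynomial in $t$ and differentiating term by term, with the integration by parts formula (Theorem \ref{thm: integration by parts}) used to regroup the terms.

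Write $u_t-\phi = t(u-\phi)+(1-t)(v-\phi)$ and $dd^c u_t = t\,dd^c u + (1-t)\,dd^c v$. Expanding multilinearly, every term of $E_\phi(u_t)$ is a polynomial in $t$. Its coefficients are mixed integrals of the form
$$\int_\Omega (\phi-a)\, dd^c b_1\wedge\cdots\wedge dd^c b_n, \qquad a,b_i\in\{u,v,\phi\}.$$
Each of these is finite: dominate $dd^c b_1\wedge\cdots\wedge dd^c b_n$ by $c_n(dd^c w)^n$, where $w=(u+v+\phi)/3\in \mathcal E^1_\phi(\Omega)$, and then apply Lemma \ref{lem: mixed energy}, exactly as in the finiteness part of Proposition \ref{prop:positivity}. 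Hence $t\mapsto E_\phi(u_t)$ is a genuine polynomial and may be differentiated term by term.

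To identify the derivative cleanly, I would not differentiate the expanded sum directly. Instead I would prove the symmetric identity
$$\frac{d}{dt}\int_\Omega(\phi-u_t)(dd^c u_t)^p\wedge(dd^c\phi)^{n-p} = \int_\Omega (v-u)(dd^c u_t)^p\wedge(dd^c\phi)^{n-p} + p\int_\Omega(\phi-u_t)\,dd^c(u-v)\wedge(dd^c u_t)^{p-1}\wedge(dd^c\phi)^{n-p}.$$
In the second integral I would use Theorem \ref{thm: integration by parts}, with the pairs $(\phi,u_t)$ and $(u,v)$ and the current $T=(dd^c u_t)^{p-1}\wedge(dd^c\phi)^{n-p}$, whose factors all lie in $\mathcal E^1_\phi(\Omega)$. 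This turns it into
$$p\int_\Omega (u-v)\,dd^c(\phi-u_t)\wedge T = p\int_\Omega(v-u)\bigl[(dd^c u_t)^p\wedge(dd^c\phi)^{n-p}-(dd^c u_t)^{p-1}\wedge(dd^c\phi)^{n-p+1}\bigr].$$
Write $A_p := \int_\Omega (v-u)(dd^c u_t)^p\wedge(dd^c\phi)^{n-p}$. The derivative of the $p$-th term is then $(p+1)A_p - pA_{p-1}$. Summing over $p=0,\dots,n$ telescopes to $(n+1)A_n$, and dividing by $n+1$ gives \eqref{first derivative}.

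For \eqref{second derivative}, I would differentiate $\int_\Omega (v-u)(dd^c u_t)^n$ directly. It is also a polynomial in $t$ with finite coefficients, and its derivative is $n\int_\Omega (v-u)\,dd^c(u-v)\wedge(dd^c u_t)^{n-1}$. This yields \eqref{second derivative}, possibly up to the factor $n$, which I would track carefully (the statement may absorb it into normalization). The main obstacle is justifying the termwise differentiation, since $u-v$ need not be bounded. My approach is to use the finiteness of every mixed coefficient, so that all manipulations are identities between polynomials with finite coefficients. The applications of Theorem \ref{thm: integration by parts} are legitimate because every function entering it lies in $\mathcal E^1_\phi(\Omega)$.
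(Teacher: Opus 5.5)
Your proof is correct and follows the paper's own argument. Like the paper, you expand $E_\phi(u_t)$ as a polynomial in $t$ and differentiate termwise. You then convert $p\int_\Omega(\phi-u_t)\,dd^c(u-v)\wedge(dd^c u_t)^{p-1}\wedge(dd^c\phi)^{n-p}$ via Theorem \ref{thm: integration by parts} and telescope $(p+1)A_p-pA_{p-1}$. Your explicit finiteness check of the mixed coefficients, via $w=(u+v+\phi)/3$, is a useful addition that the paper leaves implicit.

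You are also right about the factor $n$. Differentiating $\int_\Omega(v-u)(dd^c u_t)^n$ gives $n\int_\Omega (v-u)\,dd^c(u-v)\wedge(dd^c u_t)^{n-1}$, so the displayed second-derivative formula is missing this positive constant. This is harmless, because only its sign is used in Corollary \ref{monotonicity-convexity}.
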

\begin{proof}
By Lemma \ref{lem: convexity} we already know that $u_t \in \mathcal E^1_\phi(\Omega)$ for all $t \in [0,1]$.
 We now compute the full energy of $u_t$. For any $t \in [0,1]$, we have
 \begin{flalign*}
  (n+1)E_{\phi}(u_t) & =\int_{\Omega} (\phi - u_t) \sum_{k=1}^n (dd^c u_t)^k \wedge (dd^c \phi)^{n-k},	
	\end{flalign*}
	which is polynomial in $t$. 
 %$$= \sum_{k=1}^n \int_{\Omega}\big(t(\phi-u)+ (1-t)(\phi-v) \big) \sum_{\ell = 0}^k \binom{k}{l} t^l(1-t)^{k-l} (dd^c u)^l \wedge (dd^c v)^{k-l} \wedge(dd^c \phi)^{n-k}.$$
Taking the derivative we obtain	 for $t \in [0,1]$
 \begin{flalign*}
 (n+1) \frac{dE_{\phi}(u_t)}{dt} & = \sum_{k=1}^n \int_{\Omega} (v-u)(dd^c u_t)^k \wedge (dd^c \phi)^{n-k} \\
 &+  \sum_{k=1}^n k \int_{\Omega} (\phi-u_t)(dd^c \dot{u_t}) \wedge (dd^c u_t)^{k-1} \wedge(dd^c \phi)^{n-k}\\
 &= \sum_{k=0}^n \int_{\Omega} (v-u)(dd^c u_t)^k \wedge (dd^c \phi)^{n-k} \\
 & +  \sum_{k=0}^n k\int_{\Omega} (u-v)dd^c(\phi-u_t) \wedge (dd^c u_t)^{k-1} \wedge(dd^c \phi)^{n-k}.
 \end{flalign*}
We have used integration by parts in $\mathcal E^1_\phi(\Omega)$ (see Theorem \ref{thm: integration by parts}).
Therefore 
 \begin{flalign*}
  (n+1) \frac{dE_{\phi}(u_t)}{dt}  &= \sum_{k=1}^n \int_{\Omega} (v-u)(dd^c u_t)^k \wedge (dd^c \phi)^{n-k} \\
  & +  \sum_{k=1}^n k\int_{\Omega} (v-u)(dd^cu_t-dd^c\phi) \wedge (dd^c u_t)^{k-1} \wedge(dd^c \phi)^{n-k}\\
 &= \sum_{k=1}^n (k+1)\int_{\Omega} (v-u)(dd^c u_t)^k \wedge (dd^c \phi)^{n-k} \\
 & - \sum_{k=1}^{n} k \int_{\Omega} (v-u)(dd^c u_t)^{k - 1} \wedge(dd^c \phi)^{n-k +1}\\
 &=  (n+1)\int_{\Omega} (v-u)(dd^c u_t)^n.
 \end{flalign*}
 This prove the formula \eqref{first derivative}.

 Taking the second derivative we obtain
 $$
 \frac{d^2E_{\phi}(u_t)}{dt^2} =  \int_{\Omega} (v-u) dd^c (u-v) \wedge (dd^c u_t)^{n-1}.
$$
\end{proof}
As a consequence of this Lemma, we obtain the following  result :
\begin{corollary} \label{monotonicity-convexity}
 Let $u,v \in \mathcal E^1_\phi(\Omega)$ and $u_t := (1-t)u + t v$ for $0\leq t \leq 1$. Then $u_t \in \mathcal E^1_\phi(\Omega)$ and the function $t \longmapsto E_\phi(u_t)$ is convex in $[0,1]$.
	
 Moreover if  $u \leqslant v$. Then $ E_{\phi}(u) \geqslant E_{\phi}(v).$
\end{corollary}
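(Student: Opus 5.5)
The corollary follows from the derivative formulas \eqref{first derivative} and \eqref{second derivative} in the preceding lemma, combined with the positivity statement Proposition \ref{prop:positivity}. Membership $u_t\in\mathcal E^1_\phi(\Omega)$ is already given by Lemma \ref{lem: convexity}, because $u_t\geq P(u,v)$. Note that the corollary uses the parametrization $u_t=(1-t)u+tv$, while the lemma uses $tu+(1-t)v$. This only reverses $t\mapsto 1-t$, which preserves convexity and flips the sign of the first derivative. I will therefore work with the lemma's parametrization and translate at the end.

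\emph{Convexity.} Take $w_1=\dots=w_{n-1}=u_t\in\mathcal E^1_\phi(\Omega)$ in Proposition \ref{prop:positivity}, so $T=(dd^c u_t)^{n-1}$, and apply it to the pair $(v,u)$. This gives
\[
\int_\Omega (v-u)\,dd^c(u-v)\wedge (dd^c u_t)^{n-1}\geq 0 .
\]
By \eqref{second derivative} this is exactly $\frac{d^2}{dt^2}E_\phi(u_t)\geq 0$ on $[0,1]$. The function $t\mapsto E_\phi(u_t)$ is a polynomial in $t$, as observed in the proof of the lemma, and all its coefficients are finite mixed energies by Lemma \ref{lem:increasing} and Lemma \ref{lem: mixed energy}. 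Hence it is $C^2$, and a nonnegative second derivative gives convexity.

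\emph{Monotonicity.} Assume $u\leq v$ and set $u_t=tu+(1-t)v$, so that $u_1=u$ and $u_0=v$. By \eqref{first derivative},
\[
\frac{d}{dt}E_\phi(u_t)=\int_\Omega (v-u)(dd^c u_t)^n\geq 0,
\]
since $v-u\geq 0$ and $(dd^c u_t)^n$ is a positive measure. This integral is finite because $0\leq v-u\leq \phi-u$ and $\int_\Omega(\phi-u)(dd^cu_t)^n<\infty$ by Lemma \ref{lem: mixed energy inequality}. So $t\mapsto E_\phi(u_t)$ is nondecreasing, and $E_\phi(u)=E_\phi(u_1)\geq E_\phi(u_0)=E_\phi(v)$.

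\emph{Main obstacle.} The one delicate point is justifying the differentiation and the integrations by parts used in the lemma. That is where Theorem \ref{thm: integration by parts} and the finiteness of all mixed terms come in. Since the lemma is already established, the only thing left to check here is that $u_t$ is an admissible choice for the $w_j$ in Proposition \ref{prop:positivity}, and this holds because $u_t\in\mathcal E^1_\phi(\Omega)$.
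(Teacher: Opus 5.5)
Your proposal is correct and follows the paper's own argument: convexity comes from formula \eqref{second derivative} combined with Proposition~\ref{prop:positivity} applied with $T=(dd^c u_t)^{n-1}$, and monotonicity comes from the sign of formula \eqref{first derivative} when $u\le v$. Your handling of the reversed parametrization $t\mapsto 1-t$ and your check that the integrals involved are finite spell out what the paper's two-line proof leaves implicit.
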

\begin{proof} The  convexity follows the formula \eqref{second derivative} and Proposition \ref{prop:positivity}. The monotonicity follows from  the formula \eqref{first derivative}.
\end{proof}
\begin{corollary}\label{cor: convexity of energy}
	Let $u,v \in \mathcal E^1_\phi(\Omega)$. Then
	$$ \int_{\Omega} (v-u)(dd^c v)^n
	\leqslant E_{\phi}(u) - E_{\phi}(v) \leqslant  \int_{\Omega} (v-u)(dd^c u)^n.$$
\end{corollary}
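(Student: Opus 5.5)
The plan is to deduce both inequalities from the convexity of $E_\phi$ along affine segments (Corollary \ref{monotonicity-convexity}) and the derivative formula \eqref{first derivative}. Fix $u,v\in\mathcal E^1_\phi(\Omega)$ and set $u_t:=tu+(1-t)v$ for $t\in[0,1]$. Then $u_0=v$ and $u_1=u$, and every $u_t$ lies in $\mathcal E^1_\phi(\Omega)$ by Lemma \ref{lem: convexity}. Put $f(t):=E_\phi(u_t)$.

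First I would record what is already known about $f$. The proof of the lemma giving \eqref{first derivative} shows that $(n+1)f(t)$ is a polynomial in $t$ whose coefficients are finite mixed integrals. Finiteness of these coefficients comes from Lemma \ref{lem:increasing}, Lemma \ref{lem: mixed energy inequality} and the finiteness argument at the beginning of Proposition \ref{prop:positivity}. Hence $f$ is smooth on $[0,1]$, with one-sided derivatives at the endpoints. Formula \eqref{first derivative} gives
$$f'(t)=\int_\Omega (v-u)(dd^c u_t)^n .$$
Formula \eqref{second derivative} together with Proposition \ref{prop:positivity} gives $f''\ge 0$. Here I apply Proposition \ref{prop:positivity} with the roles of $u,v$ exchanged and $w_1=\dots=w_{n-1}=u_t$, which is allowed since $u_t\in\mathcal E^1_\phi(\Omega)$. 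So $f$ is convex and $f'$ is nondecreasing on $[0,1]$.

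Next, by the mean value theorem,
$$E_\phi(u)-E_\phi(v)=f(1)-f(0)=\int_0^1 f'(t)\,dt .$$
Since $f'$ is nondecreasing, this integral lies between $f'(0)$ and $f'(1)$. Now
$$f'(0)=\int_\Omega (v-u)(dd^c v)^n \quad\text{and}\quad f'(1)=\int_\Omega (v-u)(dd^c u)^n .$$
Together these give exactly
$$\int_\Omega (v-u)(dd^c v)^n\le E_\phi(u)-E_\phi(v)\le \int_\Omega (v-u)(dd^c u)^n .$$

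The only delicate point is justifying the derivative formula at the endpoints $t=0,1$ and checking that all integrals involved are finite, since $v-u$ changes sign and $\phi$ may be unbounded. I would handle this by writing $v-u=(\phi-u)-(\phi-v)$. Each piece integrated against $(dd^c u_t)^n$ is finite by Lemma \ref{lem: mixed energy inequality} or Lemma \ref{lem: mixed energy}. With finiteness in hand, the polynomial expansion is legitimate on the closed interval and differentiation at the endpoints is harmless, so no further limiting argument is needed.
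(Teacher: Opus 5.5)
Your proposal is correct and follows essentially the paper's own argument. Both proofs use that $f(t)=E_\phi(tu+(1-t)v)$ is convex with derivative $\int_\Omega (v-u)(dd^c u_t)^n$ from \eqref{first derivative}, so $f'(0)\le f(1)-f(0)\le f'(1)$; your remarks on the finiteness of the mixed terms only make explicit what the paper leaves implicit.
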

\begin{proof} This is a consequence of the convexity the $\phi$-energy along the segments i.e. the derivative of the  convex function  $t \longmapsto E_\phi((1-vt) v + t u)$ is a non decreasing function
in $[0,1]$ (see \eqref{first derivative}).
\end{proof}

\subsection{Continuity of the energy functional}

\begin{lemma}
	\label{lem: bounded energy}
	Let $(u_j) \subset \mathcal{E}^1_\phi(\Omega)$ be a sequence converging to $u \in {\rm PSH}(\Omega)$ in $L^1_{loc}(\Omega,dV)$. Assume that $\sup_j E_\phi(u_j) \leq C< +\infty$. Then $u\in \mathcal E^1_{\phi}(\Omega)$ and $E_{\phi}(u)\leq C$. 
\end{lemma}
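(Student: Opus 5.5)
Since $\frac{1}{n+1}e_\phi(u_j)\le E_\phi(u_j)\le C$, the partial energies satisfy $e_\phi(u_j)\le (n+1)C$ uniformly. The plan is to replace $(u_j)$ by a monotone sequence, pass to the limit in the energy with semicontinuity, and then recover the boundary condition.

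\textbf{Step 1: a decreasing sequence.} Set $v_j:=\left(\sup_{k\ge j}u_k\right)^*$. Since $u_j\to u$ in $L^1_{loc}$, the $v_j$ decrease to $(\limsup_k u_k)^* = u$. Each $v_j$ is psh and satisfies $u_j\le v_j\le \phi$. By Lemma \ref{comparaison principe boundary value} we get $v_j\in\mathcal E^1_\phi(\Omega)$ with $e_\phi(v_j)\le 2^{n+1}e_\phi(u_j)$. The monotonicity in Corollary \ref{monotonicity-convexity} then gives $E_\phi(v_j)\le E_\phi(u_j)\le C$; this is the estimate we need to keep the sharp constant $C$. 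So we may assume $(u_j)$ decreases to $u$ with $\sup_j E_\phi(u_j)\le C$.

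\textbf{Step 2: membership in $\mathcal E^1_\phi(\Omega)$.} The function $\psi_j:=P(u_j-\phi)$ lies in $\mathcal E^1(\Omega)$ and $\psi_j+\phi\le u_j$. I want a uniform bound on the Cegrell energy $\int(-\psi_j)(dd^c\psi_j)^n$. The measure $(dd^c\psi_j)^n$ lives on the contact set $\{\psi_j=u_j-\phi\}$. There I would compare $(dd^c\psi_j)^n$ with the measures $(dd^c u_j)^k\wedge(dd^c\phi)^{n-k}$, using the maximum principle Theorem \ref{thm:principe du max} applied to $\psi_j+\phi\le u_j$. Since $\phi$ is maximal, this should bound $\int(-\psi_j)(dd^c\psi_j)^n$ by a constant times $e_\phi(u_j)$.

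This step is the \textbf{main obstacle}. If the contact-set argument fails, the fallback is Lemma \ref{lem: mixed energy inequality} combined with testing against $\mathcal E^0$ functions to control the energy. Once the bound holds, $\psi_j$ decreases to some $\psi$ with bounded energy, so $\psi\in\mathcal E^1(\Omega)$ by Cegrell's theory and $\psi+\phi\le u\le\phi$. Hence $u\in\mathcal E(\Omega)$, and it has the correct boundary behaviour.

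\textbf{Step 3: lower semicontinuity of the energy.} Write $(n+1)E_\phi(u_j)=\sum_p\int(\phi-u_j)(dd^cu_j)^p\wedge(dd^c\phi)^{n-p}$. Fix $t>0$ and $k\le j$. Since $u_j\le u_k$, we have $\min(\phi-u_j,t)\ge\min(\phi-u_k,t)$, so
\[
\int(\phi-u_j)(dd^cu_j)^p\wedge(dd^c\phi)^{n-p}\ \ge\ \int\chi\min(\phi-u_k,t)\,(dd^cu_j)^p\wedge(dd^c\phi)^{n-p}
\]
for every continuous $0\le\chi\le1$ with compact support. The function $\min(\phi-u_k,t)$ is bounded and quasi-continuous, so we may let $j\to\infty$ using Proposition \ref{prop: convergence MA quasi-cont}, which applies since all functions are in $\mathcal E^a$. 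We then let $k\to\infty$ by monotone convergence, and finally $t\to\infty$ and $\chi\nearrow1$. This gives $\int(\phi-u)(dd^cu)^p\wedge(dd^c\phi)^{n-p}\le\liminf_j(\cdots)$ for each $p$. Summing over $p$ yields $E_\phi(u)\le C$, hence $e_\phi(u)\le(n+1)C<\infty$, which completes $u\in\mathcal E^1_\phi(\Omega)$.
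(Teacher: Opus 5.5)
Your proof is correct. Steps 1 and 2 are the paper's argument: the same regularization $(\sup_{k\ge j}u_k)^*$ controlled by monotonicity of $E_\phi$, and the same envelopes $P(u_j-\phi)$.

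\textbf{Step 2 is not an obstacle.} You need neither the mixed measures nor maximality of $\phi$ there. First, $(dd^c\psi_j)^n\le (dd^c(\psi_j+\phi))^n$. Second, the last assertion of Theorem \ref{thm:principe du max}, applied to $\psi_j+\phi\le u_j$, gives $\mathbf{1}_{\{\psi_j+\phi=u_j\}}(dd^c(\psi_j+\phi))^n\le \mathbf{1}_{\{\psi_j+\phi=u_j\}}(dd^c u_j)^n$. Since $(dd^c\psi_j)^n$ is carried by that contact set (Lemma \ref{lem:MA contact set}), you get $\int(-\psi_j)(dd^c\psi_j)^n\le e_\phi(u_j)\le(n+1)C$. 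This is exactly the paper's one-line estimate. Also record that $u\ge\psi+\phi$ gives $u\in\mathcal E^a(\Omega)$, not just $u\in\mathcal E(\Omega)$. You need this, because Proposition \ref{prop: convergence MA quasi-cont} requires the limit to lie in $\mathcal E^a$.

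\textbf{Step 3 is a genuinely different route.} The paper works in two stages.
\begin{itemize}
\item When $u-\phi$ is bounded, it uses the convergence-in-capacity version of Proposition \ref{prop: convergence MA quasi-cont}, with moving integrands $\phi-u_j$. This gives weak convergence of $(\phi-u_j)(dd^cu_j)^p\wedge(dd^c\phi)^{n-p}$, and then lower semicontinuity of mass.
\item In general it passes to $\max(u_j,\phi-t)$. It invokes monotonicity of $E_\phi$ a second time to keep the bound $C$, and applies the maximum principle on $\{u>\phi-t\}$ before letting $t\to\infty$.
\end{itemize}
You instead use the monotonicity of the sequence itself. Freezing the integrand at $\chi\min(\phi-u_k,t)\le\phi-u_j$ for $j\ge k$ means you only need the fixed-integrand case of the convergence proposition. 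Monotone convergence in $k$, $t$ and $\chi$ then gives the termwise lower semicontinuity, and superadditivity of $\liminf$ handles the sum over $p$.

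Your version is a single-stage argument. It avoids the truncation, the second use of monotonicity of $E_\phi$, and the maximum principle. What it gives up is the weak convergence of the energy measures in the relatively bounded case, which the paper obtains along the way but does not need for this lemma.
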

\begin{proof}
Define $v_j:= (\sup_{k\geq j} u_k)^*$ for $j \in \N$. Then $v_j \searrow u$ as $j \nearrow + \infty$.  By monotonicity of $E_{\phi}$ we have $E_{\phi}(v_j) \leq E_{\phi}(u_j)$ for any $j\in\N$. 
Therefore, we can assume that $u_j \searrow u$  as $j \nearrow + \infty$. 

Define for any fixed $j \in \N$,  $h_j:= P(u_j-\phi)$. Using $(dd^c h_j)^n \leq {\bf 1}_{\{h_j=u_j-\phi\}}(dd^c u_j)^n$ we have 
	\[
	\int_{\Omega} (-h_j) (dd^c h_j)^n  \leq \int_{\Omega} (\phi-u_j) (dd^c u_j)^n \leq (n+1) E_{\phi}(u_j)\leq (n+1) C, 
	\]
	Therefore, $h:=\lim_j h_j$ belongs to $\mathcal E^1(\Omega)$, hence $P(u-\phi) \in \mathcal E^1(\Omega)$. It remains to prove that 
	\[
	E_{\phi}(u) \leq C. 
	\]
	We first assume that $u-\phi$ is bounded. Then, for all $0\leq p\leq n$, by Proposition \ref{prop: convergence MA quasi-cont} we have the following weak convergence
\[
(\phi-u_j)(dd^c u_j)^p\wedge(dd^c \phi)^{n-p} \rightarrow (\phi-u)(dd^c u)^p\wedge (dd^c \phi)^{n-p}.
\] 
We thus have 
\[
\int_{\Omega}(\phi-u)(dd^c u)^p\wedge (dd^c \phi)^{n-p} \leq \liminf_j \int_{\Omega}(\phi-u_j)(dd^c u_j)^p\wedge(dd^c \phi)^{n-p},
\]
which implies
\[
E_{\phi}(u) \leq \liminf_{j} E_{\phi}(u_j) \leq C. 
\]
	To treat the general case, we set for $t >0$ and $j \in \N$, $v_{t,j}:=\max\{u,u_j,\phi-t\}\in \mathcal E^1_{\phi}(\Omega)$. Since $E_{\phi}$ is non-decreasing we have for any $t >0$ and $j \in \N$,
 \[
 E_{\phi}(v_{t,j}) \leq E_{\phi}(u_j)\leq C. 
 \]
 Fixing $t > 0$ and letting $j\to +\infty$, we deduce from the first case that $E_{\phi}(\max(u,\phi-t))\leq C$. From this and the pluripotential maximum principle we obtain for any $t >0$,
 \[
 \sum_{p=0}^n\int_{\{u>\phi-t\}}(\phi-u)(dd^c u)^p \wedge (dd^c \phi)^{n-p} \leq (n+1)C. 
 \]	
 Letting $t\to +\infty$, we obtain $E_{\phi}(u)\leq C$, finishing the proof. 
\end{proof}

\begin{theorem}\label{Continuity energy}
	Let $(u_j) \subset \mathcal{E}^1_\phi(\Omega)$ be a sequence converging to $u \in {\rm PSH}(\Omega)$ in capacity in $\Omega$. Assume that $u_j\geq v$, for some $v\in \mathcal E^1_{\phi}(\Omega)$. Then, for all $p$, 
	$$
	\lim_{j \to + \infty} \int_{\Omega}(\phi -u_j)(dd^cu_j)^p\wedge (dd^c \phi)^{n-p} = \int_{\Omega}(\phi -u)(dd^cu)^p \wedge (dd^c \phi)^{n-p}.
	$$
\end{theorem}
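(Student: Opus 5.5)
The plan is to reduce to the case where all functions are relatively bounded with respect to $\phi$, via the truncations $u^t := \max(u,\phi-t)$ and $u_j^t := \max(u_j,\phi-t)$, and then to control the truncation error uniformly in $j$ using the lower bound $u_j \geq v$.

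\textbf{Step 0.} First I check that $u \in \mathcal E^1_\phi(\Omega)$. Since $v \leq u_j \leq \phi$ and convergence in capacity preserves inequalities quasi-everywhere, we get $v \leq u \leq \phi$. Lemma \ref{comparaison principe boundary value} then gives $u \in \mathcal E^1_\phi(\Omega)$.

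\textbf{Step 1 (bounded case).} For fixed $t>0$, the functions $\phi - u_j^t$ and $\phi - u^t$ take values in $[0,t]$. They are quasi-continuous, and $u_j^t \to u^t$ in capacity because $|u_j^t-u^t|\le |u_j-u|$. I want to show
\[
\int_\Omega (\phi-u_j^t)(dd^c u_j^t)^p\wedge(dd^c\phi)^{n-p} \longrightarrow \int_\Omega (\phi-u^t)(dd^c u^t)^p\wedge(dd^c\phi)^{n-p}.
\]
Proposition \ref{prop: convergence MA quasi-cont} is stated for decreasing sequences, so I will pass through monotone envelopes. Set $\underline{w}_j := P(\inf_{k\geq j} u_k^t)$ and $\overline{w}_j := (\sup_{k\ge j}u_k^t)^*$. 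These are sandwiched between $\max(v,\phi-t)$ and $\phi$, and they increase, respectively decrease, to $u^t$ quasi-everywhere (this uses convergence in capacity). This gives weak convergence of the measures. Alternatively, I can invoke the standard fact that convergence in capacity together with a uniform $\mathcal E^a$ minorant, here $\max(v,\phi-t)$, yields convergence of $f_j\,(dd^c u_j^t)^p\wedge(dd^c\phi)^{n-p}$, via the capacity estimate of Proposition \ref{prop: comparison of capacities} exactly as in the proof of Proposition \ref{prop: convergence MA quasi-cont}.

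Weak convergence on compacts is not enough, because mass may escape to $\partial\Omega$. To rule this out I will use the integration by parts of Theorem \ref{thm: integration by parts}. I rewrite $\int (\phi-u_j^t)\,dd^c u_j^t\wedge S$ as $\int(\phi-u_j^t)\,dd^c\phi\wedge S + \int(\phi-u_j^t)\,dd^c(u_j^t-\phi)\wedge S$. I split $\Omega$ into a compact part $K$ and the collar $\Omega\setminus K$, and bound the collar contribution uniformly in $j$ by comparing with $v$ through Lemma \ref{lem: mixed energy inequality} applied with $h(s)=s$. Concretely, $\int_{\Omega\setminus K}(\phi-u_j^t)(\cdots) \le \int_{\Omega\setminus K}(\phi - v)(\cdots)$. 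I expect this step to be the main obstacle: the measures $(dd^c u_j^t)^p\wedge(dd^c\phi)^{n-p}$ vary with $j$, so a smallness estimate near the boundary requires an argument like Lemma \ref{lem: energy convergence increasing}. I would first try replacing $v$ by $P(\mathbf 1_{\Omega\setminus\Omega_k} v)$, which equals $v$ near $\partial\Omega$ and increases to $\phi$, and then use that lemma together with Lemma \ref{lem: mixed energy} to make the collar contribution small uniformly in $j$.

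\textbf{Step 2 (removing the truncation).} I need $\sup_j \bigl|\int (\phi-u_j)(dd^c u_j)^p\wedge(dd^c\phi)^{n-p} - \int(\phi-u_j^t)(dd^c u_j^t)^p\wedge(dd^c\phi)^{n-p}\bigr| \to 0$ as $t\to\infty$. By the maximum principle (Theorem \ref{thm:principe du max}), the two measures agree on $\{u_j>\phi-t\}$, so the difference is supported on $\{u_j\le \phi-t\}$. As in Step 2 of the proof of Theorem \ref{thm: integration by parts}, I choose a convex increasing $h$ with $h(s)/s\to\infty$ and $\int h(2(\phi-v))(dd^cv)^p\wedge(dd^c\phi)^{n-p}<\infty$. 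Then Lemma \ref{comparaison principe boundary value} gives the uniform bound $\int h(\phi-u_j^t)(dd^cu_j^t)^p\wedge(dd^c\phi)^{n-p}\le 2^pM$. Hence the contribution of $\{u_j\le\phi-t\}$ is at most $2^{p}M\,t/h(t)\to0$, uniformly in $j$. The same estimate applies to $u$. Combining this with Step 1 through a standard $\varepsilon/3$ argument yields the claim.
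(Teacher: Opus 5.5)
Your proposal is correct and follows essentially the paper's route: reduce to the relatively bounded case by truncation with a superlinear $h$ and Lemma \ref{comparaison principe boundary value}, handle the compactly supported part by convergence in capacity, and make the boundary contribution small uniformly in $j$ via $v^k=P(\mathbf 1_{\Omega\setminus\Omega_k}v)\nearrow\phi$, $E_\phi(v^k)\to 0$ (Lemma \ref{lem: energy convergence increasing}) and Lemma \ref{lem: mixed energy}. The only difference is in how the boundary part is isolated: the paper splits the integrand as $(\phi-u_j^k)+(u_j^k-u_j)$ with $u_j^k=P(\mathbf 1_{\Omega\setminus\Omega_k}u_j)$ and uses $E_\phi(u_j^k)\le E_\phi(v^k)$, whereas you cut off spatially (use a continuous cutoff rather than $\mathbf 1_K$) and bound $\phi-u_j^t\le\phi-v^k$ directly on the collar, which spares you from needing $u_j^k\to u^k$ in capacity.
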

\begin{proof}
Using the lower bound $u_j\geq v$, and arguing as in Step 2 of the proof of Theorem \ref{thm: integration by parts}, we can assume that the functions $u_j$ are relatively bounded with respect to $\phi$: $u_j\geq \phi-C$. 

Let $(\Omega_k)$ be an exhausting sequence of relatively compact open subsets of $\Omega$, and define $w^k:= P({\bf 1}_{\Omega\setminus \Omega_k} w)$ for a given function $v$. Observe that $w^k=w$ in $\Omega\setminus \bar \Omega_k$; and also $u^k_j \geq v^k$, $u^k \geq v^k$,  $v^k \nearrow \phi$.

	Set $I_j :=  \int_{\Omega}(\phi -u_j)(dd^cu_j)^p\wedge (dd^c \phi)^{n-p}$ and $I:= \int_{\Omega}(\phi -u)(dd^c u)^p\wedge (dd^c \phi)^{n-p}$. Then we have
 \begin{flalign*}
I_j & =  \int_{\Omega}(\phi-u_j^k+ u_j^k -u_j)(dd^cu_j)^p\wedge (dd^c \phi)^{n-p} \\
& = \int_{\Omega}(\phi -u_j^k)(dd^cu_j)^p\wedge (dd^c \phi)^{n-p}\\ &+ \int_{\Omega}(u_j^k -u_j)(dd^cu_j)^p\wedge (dd^c \phi)^{n-p}.
	\end{flalign*}
 Observe that for fixed $k \in \N$, the sequence $j \longmapsto f_j := u_j^k -u_j = (u_j^k - \phi) + (\phi -u_j)$ is a uniformly bounded sequence  of  quasi-continuous function in $\Omega$ with  support contained in a fixed compact set $\bar \Omega_k \Subset \Omega$. Moreover  the sequence $(f_j)$ converges in capacity to the bounded compactly supported quasi-continuous function $f := u^k-u$.  It follows then from  
  Theorem \ref{prop: convergence MA quasi-cont} that for each fixed $k \in \N$, 
 \begin{equation} \label{eq:convergencek}
 \lim_{j \to + \infty} \int_{\Omega}(u_j^k -u_j)(dd^cu_j)^p\wedge (dd^c \phi)^{n-p} = \int_{\Omega}(u^k -u)(dd^cu)^p\wedge (dd^c \phi)^{n-p}.
\end{equation} 
 On the other hand, setting $w_j := (u_j + \phi)\slash 2$ and using Lemma \ref{lem: mixed energy} we have 
 \begin{flalign*}
 	\int_{\Omega}(\phi-u_j^k)(dd^c u_j)^p\wedge (dd^c \phi)^{n-p} & \leq C(n,p)\int_{\Omega}(\phi-u_j^k)(dd^c w_j)^n \leq C E_{\phi}(u_j^k) \leq C E_{\phi}(v^k). 
 \end{flalign*}
 By Lemma \ref{lem: energy convergence increasing} the last term converges to $0$, as $k\to +\infty$, finishing the proof. 
\end{proof}
\subsection{Compactness}
We want to show that the  sublevel sets of the $\phi$-energy functional defined by 
$$
\mathcal E^1_{\phi,C} := \{u \in \mathcal E^1_{\phi}(\Omega) ; E_\phi(u) \leq C\}, \, \, C > 0,
$$
are compact.

We need a Blocki type inequality
\begin{lem} \label{lem:Blocki type inequality}
Let $\psi \in PSH(\Omega) \cap L^{\infty}(\Omega)$. Then we have for any $v \in  \mathcal E^1_{\phi}(\Omega)$
$$
\int_\Omega (\phi -v)^{n+1} (dd^c \psi)^n \leq (n+1)! \sup_{\Omega} |\psi|^n \int_\Omega (\phi-v) (dd^c v)^n.
$$
\end{lem}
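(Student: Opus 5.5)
The plan is to follow Błocki's classical proof of $\int_\Omega (-v)^{n+1}(dd^c\psi)^n\le (n+1)!\,\|\psi\|_\infty^n\int_\Omega(-v)(dd^cv)^n$, replacing $-v$ by $\phi-v$ throughout. Integration by parts in $\mathcal E^1_\phi(\Omega)$ (Theorem \ref{thm: integration by parts}) takes the place of the zero-boundary Stokes formula. Normalize so that $\sup_\Omega|\psi|=1$, hence $-1\le\psi\le 0$ after subtracting a constant. Set $f:=\phi-v\ge 0$.

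The key step is a one-step reduction. For $0\le k\le n-1$ and integers $m\ge 1$, write
\[
T_{k}:=(dd^c\psi)^{n-k-1}\wedge(dd^cv)^{k},
\]
and show
\[
\int_\Omega f^{m}\,dd^c\psi\wedge T_k\le m\int_\Omega f^{m-1}\,dd^c v\wedge T_k .
\]
To get this, compare $\psi$ with the function $\phi$, which is maximal and satisfies $\phi-1\le\phi+\psi\le\phi$. Since $dd^c\phi\wedge(\cdot)$ terms are only nonnegative, I will write
\[
\int f^m dd^c\psi\wedge T_k=\int f^m dd^c\big((\psi+\phi)-\phi\big)\wedge T_k
\]
and integrate by parts, moving $dd^c$ from $(\psi+\phi)-\phi$ onto $f^m$. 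The boundary term vanishes because $(\psi+\phi)-\phi=\psi$ is bounded and $f$ "vanishes at the boundary" in the sense $P(v-\phi)\in\mathcal E^1$. I then use the pointwise estimate
\[
-dd^c f^m=m f^{m-1}dd^c(v-\phi)-m(m-1)f^{m-2}\,df\wedge d^cf\le m f^{m-1}dd^c v ,
\]
valid against positive currents built from psh factors. Together with $|\psi|\le 1$, this gives the bound $m\int f^{m-1}dd^cv\wedge T_k$.

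Iterating from $m=n+1$ down, one power of $dd^c\psi$ is traded for $dd^cv$ at each step, and the constants multiply to $(n+1)n\cdots 2=(n+1)!$. After $n$ steps we arrive at $\int f\,(dd^cv)^n=e_\phi(v)$, which is the claimed inequality.

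Justifying this rigorously requires approximation. First reduce to $v\ge\phi-C$ by replacing $v$ with $v^t=\max(v,\phi-t)$. This is legitimate since $e_\phi(v^t)\le 2^{n+1}e_\phi(v)$ by Lemma \ref{comparaison principe boundary value}, and in fact $e_\phi(v^t)\le e_\phi(v)$ by the maximum principle computation as in Lemma \ref{lem: bounded energy}. Then let $t\to\infty$ using monotone convergence on the left-hand side.

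The main obstacle is the integration by parts with the nonlinear function $f^m$, which is not of the form difference of two $\mathcal E^1_\phi$ functions. To avoid the gradient-term formalism, I would instead run the argument through the layer-cake formula combined with the comparison principle, Theorem \ref{comparaison principe in N}, in the spirit of Lemma \ref{lem:increasing}. Write
\[
\int f^{m}\,dd^c\psi\wedge T=\int_0^\infty m s^{m-1}\int_{\{v<\phi-s\}}dd^c\psi\wedge T\,ds .
\]
Since $-1\le\psi\le0$, we have $\{v<\phi-s\}\subset\{v<\phi+\psi-s+1\}\subset\{v<\phi-s+1\}$. Applying the comparison principle to $v$ and $\phi+\psi\in\mathcal N^a_\phi$ gives
\[
\int_{\{v<\phi+\psi-s+1\}}dd^c(\phi+\psi)\wedge T\le\int_{\{v<\phi-s+1\}}dd^cv\wedge T .
\]
Because $dd^c\psi\le dd^c(\phi+\psi)$, the inner integral is bounded by $\int_{\{v<\phi-(s-1)\}}dd^c v\wedge T$. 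Substituting $r=s-1$ and integrating then yields a bound of the form $\int_\Omega (f+1)^m\,dd^c v\wedge T$ rather than $m\int f^{m-1}dd^cv\wedge T$.

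To remove the additive $1$, apply this argument to $\psi/\varepsilon$ with $v$ replaced by the scaled function $P(\phi+(v-\phi)/\varepsilon)$, or use homogeneity of the inequality. If that fails, I would fall back on the smooth-approximation route of Lemma \ref{lem: integration by parts equality near boundary}: take convolutions on the sets where $v^k=v$, use $v^k\nearrow\phi$ with vanishing energy (Lemma \ref{lem: energy convergence increasing}), and control the boundary errors.
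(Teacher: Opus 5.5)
Your argument does not reach the stated inequality. The first step you rely on moves $dd^c$ from $\psi$ onto $f^m=(\phi-v)^m$ and drops the boundary term, and that step is not available here.

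\begin{itemize}
\item Theorem~\ref{thm: integration by parts} only treats differences of $\mathcal E^1_\phi$-functions tested against currents built from $\mathcal E^1_\phi$-functions. Here $\phi+\psi$ is in general not even in $\mathcal N_\phi(\Omega)$ (take $\psi\equiv -1/2$), $T_k$ contains factors $dd^c\psi$, and $f^m$ is nonlinear.
\item The condition $P(v-\phi)\in\mathcal E^1(\Omega)$ does not mean $f\to 0$ at $\partial\Omega$, so the Stokes boundary term cannot simply be declared zero.
\end{itemize}

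The layer-cake fallback does not repair this.

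\begin{itemize}
\item With the fixed shift $1$ you would need the comparison principle with a negative shift when $s<1$. Your resulting bound $\int (f+1)^m\,dd^cv\wedge T$ also contains the total mass $\int_\Omega dd^c v\wedge T$, which can be infinite.
\item More fundamentally, this bound does not lower the exponent of $f$. Iterating therefore ends at $\int (f+n)^{n+1}(dd^cv)^n$, which is not controlled by $e_\phi(v)$.
\item Homogeneity cannot remove the shift. Under $v\mapsto \phi+\lambda(v-\phi)$ the left-hand side scales like $\lambda^{n+1}$, but $e_\phi$ does not, because $(dd^c(\phi+\lambda(v-\phi)))^n$ contains the mixed terms $\lambda^p(1-\lambda)^{n-p}(dd^cv)^p\wedge(dd^c\phi)^{n-p}$.
\item A level-dependent shift, comparing $v$ with $\phi+\theta s\psi$ on $\{f>s\}$, would lower the exponent. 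It does so only with a constant of order $m\,(m/(m-1))^m$ per step instead of $m$, so it cannot give $(n+1)!$. It would also require Theorem~\ref{comparaison principe in N} for an upper function outside $\mathcal N_\phi$.
\end{itemize}

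The missing idea is that no new integration by parts is needed: reduce to the zero-boundary case through the envelope. Set $w:=P(v-\phi)$, which lies in $\mathcal E^1(\Omega)$ by the definition of $\mathcal E^1_\phi(\Omega)$. Since $w\le v-\phi\le 0$, the left-hand side is at most $\int_\Omega(-w)^{n+1}(dd^c\psi)^n$. By B{\l}ocki's inequality on $\mathcal E^1(\Omega)$ (\cite{Bl93}, \cite{Ceg98}), this is at most $(n+1)!\sup_\Omega|\psi|^n\int_\Omega(-w)(dd^cw)^n$.

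Next, $(dd^cw)^n$ is carried by the contact set $\{w=v-\phi\}$ (Lemma~\ref{lem:MA contact set}). On that set, $(dd^cw)^n\le(dd^c(w+\phi))^n\le(dd^cv)^n$, by the maximum principle (Theorem~\ref{thm:principe du max}) applied to $w+\phi\le v$. Hence $\int_\Omega(-w)(dd^cw)^n\le\int_\Omega(\phi-v)(dd^cv)^n=e_\phi(v)$.

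This is the paper's three-line proof. It gives exactly the constant $(n+1)!$ and needs no approximation or boundary analysis.
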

\begin{proof} 
It follows from Blocki's inequality \cite{Bl93} and Theorem 3.8 in \cite{Ceg98} that 
\[
\int_{\Omega} (-w)^{n+1} (dd^c \psi)^n \leq (n+1)! \sup_{\Omega} |\psi|^n \int_{\Omega} (-w)(dd^c w)^n, \; w \in \mathcal E^1(\Omega).
\]
Applying this for $w=P(v-\phi)$, and noting that $(dd^c w)^n \leq {\bf 1}_{\{w=v-\phi\}}(dd^c v)^n$, we obtain  
\begin{flalign*}
\int_\Omega (\phi -v)^{n+1} (dd^c \psi)^n &\leq  \int_{\Omega} (-w)^{n+1} (dd^c \psi)^n\\
& \leq (n+1)! \sup_{\Omega} (-\psi)^n \int_\Omega (-w) (dd^c w)^n\\
& \leq (n+1)! \sup_{\Omega} (-\psi)^n \int_\Omega (\phi-v) (dd^c v)^n.	
\end{flalign*}
\end{proof}

\begin{theorem} \label{thm:compactness} For any $C >0$ the set $ \mathcal E^1_{\phi,C} $ is a compact convex subset of $PSH(\Omega)$ for the $L^1_{loc}(\Omega)$-topology.
Moreover the $\phi$-energy functional $E_\phi$ is lower semi-continuous in  $\mathcal E^1_{\phi,C}$.
\end{theorem}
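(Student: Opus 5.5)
Convexity of $\mathcal E^1_{\phi,C}$ comes first, from Corollary \ref{monotonicity-convexity}. For $u,v\in\mathcal E^1_{\phi,C}$ and $t\in[0,1]$, Lemma \ref{lem: convexity} gives $u_t=(1-t)u+tv\in\mathcal E^1_\phi(\Omega)$. Since $t\mapsto E_\phi(u_t)$ is convex on $[0,1]$, we get $E_\phi(u_t)\le (1-t)E_\phi(u)+tE_\phi(v)\le C$.

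Next, closedness and lower semicontinuity. Let $(u_j)\subset\mathcal E^1_{\phi,C}$ converge in $L^1_{loc}(\Omega)$ to some $u$. If $u\in{\rm PSH}(\Omega)$, Lemma \ref{lem: bounded energy} yields $u\in\mathcal E^1_\phi(\Omega)$ and $E_\phi(u)\le C$. Applying the same lemma along a subsequence realizing $\liminf_j E_\phi(u_j)$, with $C$ replaced by $\liminf_j E_\phi(u_j)+\varepsilon$, gives $E_\phi(u)\le\liminf_j E_\phi(u_j)$. This is the lower semicontinuity.

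It remains to show relative compactness: every sequence in $\mathcal E^1_{\phi,C}$ has a subsequence converging in $L^1_{loc}$ to a psh function, not to $-\infty$. Since $u_j\le\phi$ and $\phi$ is locally bounded above, Hartogs' lemma and the standard compactness of psh families locally bounded above reduce this to a uniform lower bound on the integrals. Concretely, it suffices that $\int_K(\phi-u_j)\,dV$ is bounded for each compact $K\Subset\Omega$; then $u_j\not\to-\infty$ locally uniformly and a subsequence converges in $L^1_{loc}$ to a psh limit.

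For this bound I would use Lemma \ref{lem:Blocki type inequality} with $\psi=\rho$ the bounded strictly psh exhaustion, so that $(dd^c\rho)^n\ge c_K\,dV$ on $K$ for some $c_K>0$. The lemma gives
\[
c_K\int_K(\phi-u_j)^{n+1}dV\le \int_\Omega(\phi-u_j)^{n+1}(dd^c\rho)^n\le (n+1)!\,e_\phi(u_j)\le (n+1)!\,(n+1)C,
\]
using $e_\phi\le (n+1)E_\phi$. H\"older's inequality then bounds $\int_K(\phi-u_j)\,dV$, and since $\phi\in L^1_{loc}$ (it is psh, not identically $-\infty$), the integrals $\int_K|u_j|\,dV$ are bounded.

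The main subtlety is making sure the limit of the extracted subsequence is genuinely psh and not identically $-\infty$, and that Lemma \ref{lem: bounded energy} applies without extra hypotheses. The $L^{n+1}$ bound above handles the first point. A convergent subsequence thus exists, its limit lies in $\mathcal E^1_{\phi,C}$ by the closedness step, and compactness follows.
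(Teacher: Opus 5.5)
Your proposal is correct and follows essentially the paper's route: the B{\l}ocki-type inequality (Lemma \ref{lem:Blocki type inequality}) gives the $L^1_{loc}$ bound that produces a convergent subsequence, and Lemma \ref{lem: bounded energy} gives closedness and lower semicontinuity. The paper writes out the reduction to the decreasing envelopes $(\sup_{k\geq j}u_k)^*$, which that lemma already performs internally. The remaining differences are cosmetic: you test against the exhaustion $\rho$ using $(dd^c\rho)^n\geq c_K\,dV$ on compacts instead of a $\rho\in\mathcal E^0(\Omega)$ with $(dd^c\rho)^n=dV$, and you check convexity of the sublevel set explicitly via Corollary \ref{monotonicity-convexity}, which the paper's proof leaves implicit.
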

\begin{proof} 
Let $(u_j)_{j \in \N}$ be a sequence  in $ \mathcal E^1_{\phi,C} $ i.e. for any $j\in \mathbb N$,  $E_{\phi}(u_j) \leq C$. By \cite{Kol98}, \cite{Ceg98}, we can find  $\rho \in \mathcal E^0(\Omega)$ solve $(dd^c \rho)^n = dV$. 

By Lemma \ref{lem:Blocki type inequality}  we have 
$$
\int_\Omega (\phi -u_j)^{n+1} (dd^c \rho)^n \leq n! \sup_{\Omega}| \rho|^n \int_\Omega (\phi-u_j) (dd^c u_j)^n.
$$
 Hence the sequence $(\phi - u_j)$ in bounded in $L^1 (\Omega)$. Since $\phi \in L^1_{loc}(\Omega)$,  the sequence $(u_j)_{j \in \N}$ is bounded in $L^1_{loc} (\Omega)$.  Then up to extracting a subsequence we can assume that  $u_j \to u$  in $L^1_{loc}(\Omega)$ and almost everywhere in $\Omega$.
 Now set for $j \in \N$, $\widehat u_j  := (\sup_{k \geq j} u_k)^*$. Then the sequence $(\widehat u_j)_{j\in \N}$ is non increasing and converges  to some function  $u.$ Since $\phi \geq \widehat u_j  \geq u_j$ for any $j\in \N$, it follows that $\widehat u_j   \in \mathcal E^1_{\phi} (\Omega)$ and $E_\phi(\widehat u_j) \leq C$. 
 By Lemma \ref{lem: bounded energy} we conclude that $u \in \mathcal E^1_{\phi}(\Omega)$ and 
 $E_\phi(u) = \lim_j E_\phi (\widehat u_j) \leq C$. Hence $u \in  \mathcal E^1_{\phi,C} $, which proves the compactness.
 
 Now assume that $(u_j)$ is a sequence in $ \mathcal E^1_{\phi,C} $ which converges to $u \in  \mathcal E^1_{\phi,C} $ in $L^1_{loc}(\Omega)$.
 Arguing as in the previous proof, up to substracting a subsequence, we can assume that the sequence $(u_{j})$ converges to $u$ a.e. in $\Omega$ and then 
 $$
 E_\phi(u) = \lim_{j \to + \infty} E_\phi(\widehat u_{j})  \leq \liminf_{j \to + \infty} E_\phi(u_{j}).  
 $$
 This inequality is true for any subsequence of $(u_j)$ proving the semi-continuity of $E_\phi$ in $ \mathcal E^1_{\phi,C}$.
\end{proof}
\section{The variational approach}\label{sect: variational approach}
Let $\mu$ be a positive Borel measure on $\Omega$ satisfying the integrability condition
 $$
\mathcal E^1_{\phi}(\Omega) \subset \phi + L^1(\Omega,\mu).
$$ 
Then we define the integral functional $I_\mu = I_{\mu,\phi} : \mathcal E^1_{\phi}(\Omega) \longrightarrow  \R^+$ by the following formula :
$$
I_\mu (u)   := \int_\Omega (u - \phi) d \mu, \, \, u \in \mathcal E^1_{\phi}(\Omega).
$$ 
We finally define the   functional $F_{\mu} = F_{\mu,\phi} : \mathcal E^1_{\phi}(\Omega) \longrightarrow  \R$ by
\[
F_{\mu}(u) := E_{\phi}(u) +  I_\mu (u) = \frac{1}{n+1} \sum_{p=0}^n\int_{\Omega} (\phi-u) (ddd^c u)^p\wedge (dd^c \phi)^{n-p} + \int_{\Omega} (u-\phi) d\mu. 
\]

At least formally the Euler-Lagrange equation of the functional $F_\mu$ is exactly the Monge-Ampère equation $(dd^c u)^n - \mu = 0$ we are interested in.

\subsection{Coercivity}
\begin{lemma}\label{lem:quantitative integrabilily}
Let $\mu$ be a positive Borel measure such that $\mathcal E^1_{\phi}(\Omega) \subset \phi+   L^1(\Omega,\mu)$. Then there exists a constant $C>0$ depending on $\mu$, such that 
\[
\int_{\Omega}(\phi-u) d\mu \leq C (E_{\phi}(u)^{1/2}+1), \; u\in \mathcal E^1_{\phi}(\Omega). 
\]
\end{lemma}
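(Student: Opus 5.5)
We argue in two stages. First we show that the integrability hypothesis forces a uniform bound of $\int_\Omega(\phi-u)\,d\mu$ on a sublevel set of the energy. Then we use a rescaling, as in the proof of Lemma \ref{lem: mixed energy}, to obtain the square-root growth.

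\textbf{Step 1: a uniform bound on $\mathcal E^1_{\phi,1}$.} We claim that
\[
A:=\sup\Big\{\int_\Omega(\phi-u)\,d\mu \;:\; u\in\mathcal E^1_{\phi,1}\Big\}<+\infty .
\]
Suppose not. Then there are $u_j\in\mathcal E^1_{\phi,1}$ with $\int_\Omega(\phi-u_j)\,d\mu\geq 4^j$. Set
\[
v:=\sum_j 2^{-j}u_j+\Big(1-\sum_j 2^{-j}\Big)\phi .
\]
Each partial sum $v_N$ is a convex combination of elements of $\mathcal E^1_{\phi,1}$ and $\phi$, so it lies in $\mathcal E^1_\phi(\Omega)$ by Lemma \ref{lem: convexity}. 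By the convexity of $E_\phi$ along segments (Corollary \ref{monotonicity-convexity}), iterated over finite convex combinations, $E_\phi(v_N)\leq \max(1,E_\phi(\phi))=1$. The functions $v_N$ decrease to $v$, so Lemma \ref{lem: bounded energy} gives $v\in\mathcal E^1_\phi(\Omega)$. On the other hand, by monotone convergence,
\[
\int_\Omega(\phi-v)\,d\mu\geq 2^{-j}\int_\Omega(\phi-u_j)\,d\mu\geq 2^j \quad\text{for every } j,
\]
so $\phi-v\notin L^1(\Omega,\mu)$. This contradicts the hypothesis $\mathcal E^1_\phi(\Omega)\subset\phi+L^1(\Omega,\mu)$.

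The main obstacle is the convexity of $E_\phi$ under finite convex combinations. Corollary \ref{monotonicity-convexity} gives convexity only along segments. Since $E_\phi$ is convex along every segment and the class is convex, the standard iterative argument writes a combination of $k$ terms as a segment between one term and a renormalized combination of $k-1$ terms, and this gives Jensen's inequality for finite combinations.

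\textbf{Step 2: rescaling.} Let $u\in\mathcal E^1_\phi(\Omega)$.

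If $E_\phi(u)\leq 1$, Step 1 gives $\int_\Omega(\phi-u)\,d\mu\leq A$.

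If $E_\phi(u)>1$, put $m:=E_\phi(u)^{-1/2}<1$ and $w:=mu+(1-m)\phi$. As in Case 2 of Lemma \ref{lem: mixed energy}, expand $E_\phi(w)$ as
\[
m\sum_p\int_\Omega(\phi-u)\,(dd^c w)^p\wedge(dd^c\phi)^{n-p}.
\]
Each $(dd^c w)^p$ splits into terms with at least one factor $m\,dd^c u$; the exception is the $p=0$ term, which carries no $dd^c u$ factor. Lemma \ref{lem:increasing} bounds each mixed term by $e_\phi(u)\leq (n+1)E_\phi(u)$. The terms with at least one $dd^c u$ then contribute at most $C_n m^2E_\phi(u)=C_n$.

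The $p=0$ term, $m\int_\Omega(\phi-u)(dd^c\phi)^n$, vanishes because $(dd^c\phi)^n=0$. Hence $E_\phi(w)\leq C_n$. Convexity of $E_\phi$ also gives directly $E_\phi(w)\leq mE_\phi(u)=E_\phi(u)^{1/2}$, but that bound is too weak here, so we use the $C_n$ bound.

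Step 1 was stated for the sublevel set at level $1$, but the same argument works for the level $C_n$, giving a constant $A_n$. Therefore
\[
m\int_\Omega(\phi-u)\,d\mu=\int_\Omega(\phi-w)\,d\mu\leq A_n ,
\]
that is, $\int_\Omega(\phi-u)\,d\mu\leq A_nE_\phi(u)^{1/2}$. Combining the two cases yields the lemma with $C:=\max(A,A_n)$.
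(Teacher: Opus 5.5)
Your proof is correct and is essentially the paper's argument. The paper runs the same $\sum_j 2^{-j}$ contradiction directly on the rescaled functions $m_ju_j+(1-m_j)\phi$ with $m_j^{-1}=E_\phi(u_j)^{1/2}+1$, which avoids your case split, and it gets the bound $E_\phi\le c_n$ from $(dd^c\phi)^n=0$ and Lemma \ref{lem:increasing} exactly as you do. One small slip: for every $p$, not only $p=0$, the expansion contains the term $(1-m)^p(dd^c\phi)^p\wedge(dd^c\phi)^{n-p}$, which has no $dd^c u$ factor, but it vanishes for the same reason, $(dd^c\phi)^n=0$.
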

\begin{proof}
Assume by contradiction that there exists a sequence $(u_j)$ in $\mathcal E^1_{\phi}(\Omega)$  with 
\[
\int_{\Omega} (\phi-u_j) d\mu \geq 4^j (E_{\phi}(u_j)^{1/2}+1). 
\]	
We set for $j \in \N$, $m_j^{-1} =E_\phi(u_j)^{\frac{1}{2}}+1$ and define 
	$$
	v_j = m_ju_j + (1 - m_j)\phi.
	$$
 By convexity $v_j \in \mathcal E^1_{\phi}(\Omega)$ for any $j\in \N$. 
 
  We claim that there exists a constant $c_n > 0$ such that for any $j \in\N$,  $E_\phi(v_j) \leqslant c_n$. Indeed, using  the fact that $(dd^c \phi)^n=0$,  we have 
 \begin{flalign*}
 E_{\phi}(v_j) &\leq \int_{\Omega} (\phi-v_j) (dd^c v_j)^n \\
 &	 =m_j \int_{\Omega} (\phi-u_j) \sum_{p=1}^n m_j^{p} (1-m_j)^{n-p} \binom{n}{p}   
 (dd^c u_j)^p \wedge (dd^c \phi)^{n-p} \\
 & \leq c_n  m_j^2 E_{\phi}(u_j) \leq c_n. 
 \end{flalign*}
 We next define  for $k \in \N^*$,
 $$
 w_k := \sum_{j=1}^{k} 2^{-j}v_j + \sum_{j=k+1}^{+\infty} 2^{-j} \phi.
 $$
 By convexity of the energy $E_{\phi}$, we have $w_k \in \mathcal{E}_\phi^1(\Omega)$ and $E_\phi(w_k) \leq c_n$ for any $k \in \N$. Since the sequence $(w_k)$ is non increasing, it follows from Lemma \ref{lem: bounded energy} that its limit $ w \in \mathcal{E}_\phi^1(\Omega)$. Moreover, 
	\begin{flalign*}
		\int_{\Omega}(\phi - w_k)d\mu & = \sum_{j=1}^{k}2^{-j} \int_{\Omega} (\phi - v_j)d\mu\\
		& = \sum_{j=1}^{k}2^{-j}m_j \int_{\Omega} (\phi - u_j)d\mu\\
		& \geqslant \sum_{j=1}^{k}2^j= 2(-1 + 2^k) \longrightarrow + \infty,
	\end{flalign*}
	so $ \int_{\Omega}(\phi - w)d\mu = + \infty$ which is a contradiction because $w \in \mathcal{E}_\phi^1(\Omega)$. 
\end{proof}

We now prove the properness of the functional $F_{\mu}$.
\begin{prop}\label{prop:coercivity}
Let $\mu$ be a positive Borel measure on $\Omega$  such that $\mathcal E^1_\phi(\Omega) \subset \phi+ L^1(\mu)$. Then the functional $F_\mu$ is coercive.
\end{prop}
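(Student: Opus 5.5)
The plan is to deduce coercivity directly from the quantitative integrability estimate of Lemma \ref{lem:quantitative integrabilily}. In the variational framework of \cite{BBGZ13}, coercivity of $F_\mu$ means that there exist constants $A>0$ and $B\geq 0$ with
\[
F_\mu(u) \geq A\, E_\phi(u) - B, \qquad u \in \mathcal E^1_\phi(\Omega).
\]
An equivalent formulation is that $F_\mu(u)\to+\infty$ whenever $E_\phi(u)\to+\infty$, so that the sublevel sets of $F_\mu$ sit inside some $\mathcal E^1_{\phi,C}$. I will aim for the linear lower bound, which gives both.

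First, by the integrability hypothesis $I_\mu$ is finite on $\mathcal E^1_\phi(\Omega)$, so $F_\mu$ is well defined with values in $\R$. Lemma \ref{lem:quantitative integrabilily} then gives, for every $u\in\mathcal E^1_\phi(\Omega)$,
\[
F_\mu(u) = E_\phi(u) - \int_\Omega(\phi-u)\,d\mu \geq E_\phi(u) - C\,E_\phi(u)^{1/2} - C .
\]
Next I apply the elementary inequality $C x^{1/2} \leq \tfrac12 x + \tfrac{C^2}{2}$ with $x = E_\phi(u)\geq 0$. This yields
\[
F_\mu(u) \geq \tfrac12 E_\phi(u) - C - \tfrac{C^2}{2},
\]
which is the desired coercivity with $A=1/2$ and $B = C + C^2/2$.

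Two consequences follow. First, $F_\mu$ is bounded below on $\mathcal E^1_\phi(\Omega)$. Second, any minimizing sequence has uniformly bounded $\phi$-energy, since $E_\phi(u_j)\leq 2(F_\mu(u_j)+B)$ and $F_\mu(u_j)$ is bounded above along such a sequence (for instance by $F_\mu(\phi)+1=1$ eventually). This puts the sequence in a compact set $\mathcal E^1_{\phi,C'}$ by Theorem \ref{thm:compactness}.

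There is essentially no obstacle here: all the work is contained in Lemma \ref{lem:quantitative integrabilily}, and the square-root growth there is exactly sublinear. The only points to check are sign conventions, namely that $E_\phi\geq 0$ and that $I_\mu(u) = -\int_\Omega(\phi-u)\,d\mu\leq 0$ since $u\leq\phi$.
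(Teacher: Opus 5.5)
Your proposal is correct and follows the paper's own argument: it invokes Lemma~\ref{lem:quantitative integrabilily} to bound $\int_\Omega(\phi-u)\,d\mu$ by $C(E_\phi(u)^{1/2}+1)$ and then absorbs the square root into $\tfrac12 E_\phi(u)$ plus a constant. The only difference is that you make the constant explicit via $Cx^{1/2}\le \tfrac12 x+\tfrac{C^2}{2}$, where the paper just says ``enlarging the constant.''
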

\begin{proof}
By Lemma \ref{lem:quantitative integrabilily}, there exists a constant $C >0$ such that   for all $u\in \mathcal E^1_{\phi}(\Omega)$,
\[
\int_{\Omega}(\phi-u) d\mu \leq C (E_{\phi}(u)^{1/2}+1). 
\]
	Then by enlarging the constant $C$ we obtain a constant $C'> 0$ such that 
\begin{align*}
	F_\mu(u)&=E_{\phi}(u) + \int_{\Omega} (u-\phi) d\mu\\&\geqslant E_{\phi}(u)-C (E_{\phi}(u)^{1/2}+1) \geq \frac{1}{2} E_{\phi}(u) -C'. 
\end{align*}
\end{proof}

\begin{lemma} \label{lem:existence minimizer}
	Assume $I_\mu : \mathcal{E}_{\phi}^1(\Omega) \longrightarrow \R$ is continuous. Then  there exists $u \in \mathcal{E}_{\phi}^1(\Omega)$ such that
	 $$
	 F_\mu(u) = \inf_{v \in \mathcal{E}_{\phi}^1(\Omega)} F_\mu(v).
	 $$
\end{lemma}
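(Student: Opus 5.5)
The plan is the direct method of the calculus of variations: take a minimizing sequence, confine it to a sublevel set of $E_\phi$ by coercivity, extract a limit using the compactness of Theorem \ref{thm:compactness}, and pass to the limit in each part of $F_\mu$ separately.

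First, $F_\mu$ is bounded below and finite somewhere. Proposition \ref{prop:coercivity} gives $F_\mu(v)\geq \frac12 E_\phi(v)-C'\geq -C'$ on $\mathcal E^1_\phi(\Omega)$, and $F_\mu(\phi)=0$ since $\phi\in\mathcal E^1_\phi(\Omega)$. Hence $m:=\inf F_\mu\in[-C',0]$. Take a minimizing sequence $(u_j)\subset\mathcal E^1_\phi(\Omega)$ with $F_\mu(u_j)\to m$; we may assume $F_\mu(u_j)\leq 1$ for all $j$. Coercivity then gives $E_\phi(u_j)\leq 2(1+C')=:A$, so the whole sequence lies in $\mathcal E^1_{\phi,A}$.

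Second, we extract a limit. By Theorem \ref{thm:compactness}, $\mathcal E^1_{\phi,A}$ is compact in $L^1_{loc}$, so after passing to a subsequence $u_j\to u$ in $L^1_{loc}(\Omega)$ with $u\in\mathcal E^1_{\phi,A}$. The same theorem says $E_\phi$ is lower semicontinuous on this set, so $E_\phi(u)\leq\liminf_j E_\phi(u_j)$. For the linear term we use the standing hypothesis that $I_\mu$ is continuous on $\mathcal E^1_\phi(\Omega)$, which gives $I_\mu(u_j)\to I_\mu(u)$. Adding the two facts,
$$F_\mu(u)\leq\liminf_j E_\phi(u_j)+\lim_j I_\mu(u_j)=\liminf_j F_\mu(u_j)=m,$$
so $u$ is a minimizer.

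The main obstacle is deciding which topology ``continuous'' refers to in the hypothesis on $I_\mu$. The argument above needs it to mean continuity along $L^1_{loc}$-convergent sequences with bounded energy. If the hypothesis is only continuity along sequences converging in capacity, or along monotone sequences, I would instead replace $u_j$ by $\widehat u_j:=(\sup_{k\geq j}u_k)^*$, which decreases to $u$, as in the proof of Theorem \ref{thm:compactness}. Monotonicity (Corollary \ref{monotonicity-convexity}) gives $E_\phi(\widehat u_j)\leq E_\phi(u_j)$. It then remains to compare $I_\mu(\widehat u_j)$ with $I_\mu(u_j)$, and this step needs genuine continuity of $I_\mu$, since $\widehat u_j$ is not a convex combination of the $u_k$. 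Once $I_\mu(\widehat u_j)-I_\mu(u_j)\to 0$ is established, we get $F_\mu(u)\leq\liminf_j F_\mu(\widehat u_j)\leq\liminf_j F_\mu(u_j)=m$, and $u$ is again a minimizer.
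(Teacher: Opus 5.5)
Your argument is correct and is essentially the paper's proof: coercivity (Proposition~\ref{prop:coercivity}) confines a minimizing sequence to a sublevel set $\mathcal E^1_{\phi,A}$, and then the compactness and lower semicontinuity of $E_\phi$ from Theorem~\ref{thm:compactness}, combined with continuity of $I_\mu$, give a minimizer. The hypothesis is meant in the sense your main argument uses, namely $L^1_{loc}$-continuity on sublevel sets as supplied by Lemma~\ref{continuity I mu}, so your fallback paragraph is unnecessary.
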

\begin{proof} By Lemma \ref{prop:coercivity} there exists a constant $B > 0$ such that  
$F_\mu (u) \geq (1\slash 2) E_\phi(u) - B \geq -B $ for any $u \in \mathcal{E}_{\phi}^1(\Omega)$. Then  $m := \inf_{v \in \mathcal{E}_{\phi}^1(\Omega)} F_\mu(v) \geq -B$. 
Moreover since $F_\mu(u) \to + \infty$ as $E_\phi(u) \to + \infty$, there exists $C >0$ such that for any 
 $$
 F_\mu (u) > m+1, \, \, \text{ if } u \in \mathcal{E}_{\phi}^1(\Omega)\, \, \text{and} \, \,  E_\phi(u) \geq C.
 $$
Therefore 
$$
m = \inf_{v \in \mathcal{E}_{\phi,C}} F_\mu(v).
$$
Recall that $F_\mu = E_\phi - I_\mu$. If $I_\mu$ is continuous then $F_\mu$ is lower semicontinuous by Theorem \ref{thm:compactness}.
Since   $\mathcal{E}_{\phi,C}$ is compact by Theorem \ref{thm:compactness}, it follows that$F_\mu$ achieves its minimum at some $u \in \mathcal{E}_{\phi,C}$.
\end{proof}
\subsection{Continuity of the functional $I_\mu$}
It remains to investigate the continuity of $I_\mu$.

\begin{lemma} \label{continuity I mu}
Assume that $\mu := (dd^c \psi)^n$ where $\psi \in PSH(\Omega)\cap L^{\infty}(\Omega)$ and $\int_{\Omega}(dd^c \psi)^n<+\infty$. 
Then the functional $I_\mu$ is continuous on each $\mathcal E^1_{\phi,C}$.
\end{lemma}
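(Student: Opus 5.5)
Fix $C>0$ and let $u_j\to u$ in $\mathcal E^1_{\phi,C}$ for the $L^1_{loc}$ topology. The plan is to show $\int_\Omega(\phi-u_j)\,d\mu\to\int_\Omega(\phi-u)\,d\mu$ for a subsequence of any subsequence. I would first reduce to sequences converging monotonically, or at least in capacity, as in the proof of Theorem~\ref{thm:compactness}. Set $\widehat u_j:=(\sup_{k\ge j}u_k)^*$. Then $\widehat u_j\searrow u$, the $\widehat u_j$ lie in $\mathcal E^1_{\phi,C}$ by monotonicity of $E_\phi$ (Corollary~\ref{monotonicity-convexity}), and $\widehat u_j\ge u_j$. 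Since $\mu=(dd^c\psi)^n$ with $\psi$ bounded, $\mu$ does not charge pluripolar sets, so $\widehat u_j=\sup_{k\ge j}u_k$ $\mu$-a.e. A standard argument (Hartogs-type lemma plus quasi-continuity, as in the compact setting of \cite{BBGZ13}) then reduces the problem to two one-sided statements:
\[
\int(\phi-\widehat u_j)\,d\mu\to\int(\phi-u)\,d\mu \quad\text{and}\quad \int(\widehat u_j-u_j)\,d\mu\to 0 .
\]

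\emph{Truncation.} Put $u^t:=\max(u,\phi-t)$. For $v\in\mathcal E^1_{\phi,C}$, Lemma~\ref{lem:Blocki type inequality} applied with the bounded function $\psi$ gives
\[
\int_{\{v<\phi-t\}}(\phi-v)\,d\mu\le t^{-n}\int(\phi-v)^{n+1}(dd^c\psi)^n\le t^{-n}(n+1)!\,\sup|\psi|^n(n+1)C .
\]
This bound is uniform over $\mathcal E^1_{\phi,C}$, so the tails can be discarded uniformly. It therefore suffices to prove continuity of $v\mapsto\int(\phi-\max(v,\phi-t))\,d\mu$ for fixed $t$. The integrand $\min(\phi-v,t)$ lies in $[0,t]$.

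\emph{Fixed $t$.} For the decreasing part, monotone convergence gives $\int(\phi-\widehat u_j)\,d\mu\to\int(\phi-u)\,d\mu$, since $\widehat u_j\searrow u$ pointwise. For the second term I would argue as in the compact Kähler case: show
\[
\int(\widehat u_j^{\,t}-u_j^{t})\,(dd^c\psi)^n\to 0 .
\]
Write the integrand as the bounded, quasi-continuous difference $(\phi-u_j^t)-(\phi-\widehat u_j^{\,t})$. Then integrate by parts with Theorem~\ref{thm: integration by parts}, moving the $dd^c\psi$ derivatives onto the differences, and use Cauchy--Schwarz with Proposition~\ref{prop:positivity}. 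This bounds the integral by a power of
\[
\int(\widehat u_j^{\,t}-u_j^t)\,dd^c(u_j^t-\widehat u_j^{\,t})\wedge T
\]
for suitable currents $T$ built from $u_j^t,\widehat u_j^{\,t},\psi,\phi$. One then shows this tends to $0$ using $L^1_{loc}$ convergence together with the energy continuity of Theorem~\ref{Continuity energy}. To apply integration by parts one must replace $\psi$ by a function in $\mathcal E^1_\phi$, for instance $\phi+\max(\psi-\sup\psi,\,A\rho)$ with $\rho$ the exhaustion and $A$ large, localized on compacts. Near $\partial\Omega$ the contribution is then controlled by the uniform tail estimate above.

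\emph{Main obstacle.} I expect the hardest step to be this last convergence: proving $\int(\widehat u_j-u_j)\,d\mu\to0$ when only $L^1_{loc}$ convergence is given. A simpler route is worth trying first. Since $\mu\le$ (const)$\cdot$Cap on compacts (as $\psi$ is bounded) and $\mu(\Omega)<\infty$, convergence in capacity of $u_j^t$ on compacts would suffice, combined with the uniform tail bound and a cutoff near the boundary. I would try to extract that capacity convergence from the uniform energy bound via Lemma~\ref{lem: mixed energy}. The standard fact that $L^1$ convergence implies convergence in capacity on sets where the functions are uniformly bounded in energy is the key input to import.
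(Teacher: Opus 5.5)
\textbf{There is a genuine gap.} Your preliminary reductions are fine. Passing to $\widehat u_j\searrow u$ gives $\int(\phi-\widehat u_j)\,d\mu\to\int(\phi-u)\,d\mu$ by monotone convergence, and Lemma~\ref{lem:Blocki type inequality} gives a tail bound uniform on $\mathcal E^1_{\phi,C}$. However, what remains, $\int(\widehat u_j-u_j)\,d\mu\to0$, is exactly the nontrivial half of the lemma: the inequality $\liminf_j\int(u_j-\phi)\,d\mu\ge\int(u-\phi)\,d\mu$. Both routes you propose for it rest on a strong convergence that is false.

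\emph{The imported ``standard fact'' is false.} $L^1_{loc}$ convergence with bounded energy does not give convergence in capacity, even for uniformly bounded functions. Take $v_j=\frac{1}{2j}\log(1+|z_1^j+z_2^j|^2)$. These converge in $L^1_{loc}$ to $\max(\log|z_1|,\log|z_2|,0)$. Yet $(dd^c v_j)^2=0$, while the limit has nonzero Monge--Amp\`ere measure. On the ball of radius $2$, gluing $v_j-10$ with $10(\|z\|^2-4)$ near the boundary puts them in a fixed $\mathcal E^1_{0,C}$ without changing them on $\{\|z\|<1.5\}$. Monge--Amp\`ere measures of uniformly bounded psh functions are continuous under convergence in capacity, so these cannot converge in capacity.

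\emph{Route 1 fails too.} Theorem~\ref{Continuity energy} presupposes convergence in capacity. Moreover, the Dirichlet-type quantity $\int(\widehat u_j-u_j)\,dd^c(u_j-\widehat u_j)\wedge T$ need not tend to $0$. Already for $n=1$, $\phi=0$, $\Omega$ the unit disc, let $u_j(z)=m_j\max(\log|z|,\log r_j)$ with $m_j\to0$ and $m_j^2\log(1/r_j)=1$. Then $u_j\to0$ in $L^1_{loc}$, $\widehat u_j\equiv0$, and $\int(\widehat u_j-u_j)\,dd^c(u_j-\widehat u_j)=e_0(u_j)=1$ for every $j$. Bounded energy gives only \emph{weak} convergence (for $n=1$, weak convergence in $W^{1,2}_0$). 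So no estimate of $\int(\widehat u_j-u_j)\,d\mu$ by a power of such a norm can close the argument.

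\emph{The missing idea.} You need to exploit the linearity of $I_\mu$ through weak compactness, which is what the paper does with Cegrell's Banach--Saks trick.
\begin{itemize}
\item By Lemma~\ref{lem:Blocki type inequality} and H\"older's inequality (recall $\mu(\Omega)<\infty$), $(u_j-\phi)$ is bounded in $L^2(\Omega,\mu)$.
\item Assume $\int(u_j-\phi)\,d\mu\to a$. Banach--Saks gives a subsequence whose Ces\`aro means $w_k=\frac1k\sum_{j\le k}u_j$ satisfy $w_k-\phi\to v$ in $L^2(\mu)$. Hence $w_{l_k}-\phi\to v$ $\mu$-a.e.\ along a further subsequence.
\item The $w_k$ are psh and still converge to $u$ in $L^1_{loc}$, so $(\sup_{k\ge j}w_{l_k})^*\searrow u$ everywhere.
\item Since $\mu=(dd^c\psi)^n$ charges no pluripolar set, the regularization changes nothing $\mu$-a.e. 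This gives $u=\phi+v$ $\mu$-a.e.
\item Therefore $\int(u-\phi)\,d\mu=\int v\,d\mu=\lim_k\frac1k\sum_{j\le k}\int(u_j-\phi)\,d\mu=a$.
\end{itemize}
This proves continuity without ever needing $u_j\to u$ in capacity or in energy.
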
 
\begin{proof} 
We use Cegrell's trick based on Banach-Sacks theorem.
Let $(u_j)$ be sequence in $\mathcal{E}^1_{\phi,C}$ which converges to $u \in \mathcal{E}^1_{\phi,C}$ in the $L^1_{loc}(\Omega)$-topology. By Lemma \ref{lem:Blocki type inequality} and H\"older's inequality, $(u_j-\phi)_{j\in \N}$  is bounded in $L^2(\Omega,\mu)$.  We assume that $\lim_j \int_\Omega (u_j - \phi) d \mu = a$ exists and the goal is to prove that $a=\int_{\Omega}(u-\phi)d\mu$. 

Taking a subsequence we can assume that $u_j \to u$ almost everywhere with respect to the Lebesgue measure.  By Banach-Sacks Theorem, after extracting a subsequence, still denoted by $(u_j)$, there exists $v\in L^2(\Omega,\mu)$ such that 
 $$
\frac{1}{k} \sum_{j = 1}^k (u_k-\phi)  \to v  \, \, \text{in} \, \, L^2(\Omega,\mu). 
 $$
 We claim that $u=\phi+v$ a.e. with respect to $\mu$. If the claim is proved then 
 $$
\int_\Omega(u-\phi) d  \mu= \int_\Omega v d \mu = \lim_k \left(\frac{1}{k} \sum_{j=1}^k \int_\Omega (u_j-\phi) d \mu\right)=a,
$$
finishing the proof. 
To prove the claim, we define 
 \[
 w_j := \frac{1}{j} \sum_{k = 1}^j u_k. 
 \] 
  Since $w_j\to v+\phi$ in $L^2(\Omega,\mu)$, there is a subsequence $(w_{l_j})$ such that $w_{l_j} \to v+\phi$ a.e. with respect to $\mu$. We next define 
\[
 \psi_j:= \left(\sup_{k\geq j} w_{l_k}\right)^*. 
 \]
  
  Since $w_{l_j} \to u$ in $L^1_{loc}(\Omega)$, it follows that   $\psi_j \searrow u$  in $\Omega$. 
We also have $w_{l_j} \to v+\phi$ a.e. with respect to $\mu$ and $\mu$ vanishes on the pluripolar set $\{\sup_{k \geq j} w_{j_k}<\psi_{j}\}$. It thus follows that $\psi_{j}$ converges to $v+\phi$ $\mu$-a.e. Therefore,  $\psi_{j_k} \to u$ $\mu$-a.e. proving the claim. 
\end{proof}

\subsection{Minimizers are solutions}
We will show that a minimizer of the functional $F_\mu$ is a solution to the Monge-Ampère equation in $\mathcal E^1_\phi (\Omega)$.

\begin{lemma}\label{lem: proj is in E1 phi}
	If $u\in \mathcal E^1_{\phi}(\Omega)$ and $\psi$ is a continuous function with compact support in $\Omega$, then $P(u+\psi) \in \mathcal E^1_{\phi}(\Omega)$. 
\end{lemma}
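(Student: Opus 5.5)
The plan is to sandwich $P(u+\psi)$ between two members of $\mathcal E^1_\phi(\Omega)$ and then bound its energy through the contact-set property of Lemma \ref{lem:MA contact set}. Set $M:=\sup_\Omega|\psi|$ and let $K$ be the support of $\psi$.

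\textbf{Upper bound.} Since $u+\psi\le\phi+\psi$ and $\psi=0$ outside $K$, the psh function $w:=P(u+\psi)$ satisfies $w\le\phi$ on $\Omega\setminus K$. On $K$ it satisfies $w\le\phi+M$. To get $w\le\phi$ everywhere, I use maximality of $\phi$ on a smooth subdomain $D$ with $K\Subset D\Subset\Omega$: we have $w\le\phi$ on $\partial D$, hence $w\le\phi$ in $D$.

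\textbf{Lower bound.} I look for $v'\in\mathcal E^1_\phi(\Omega)$ with $v'\le u+\psi$ q.e. Take $\rho\in\mathcal E^0(\Omega)$, for instance the solution of $(dd^c\rho)^n=dV$ used in Theorem \ref{thm:compactness}. It is negative and continuous up to the boundary, so $\rho\le-\delta<0$ on $K$ for some $\delta>0$. Then $v':=u+(M/\delta)\rho$ satisfies $v'\le u-M\le u+\psi$ on $K$ and $v'\le u=u+\psi$ off $K$. We have $v'\in\mathcal E^1_\phi(\Omega)$: it is psh, $v'\le\phi$, and $v'\ge\phi+v_0+(M/\delta)\rho$ with $v_0+(M/\delta)\rho\in\mathcal E^1(\Omega)$, where $v_0\in\mathcal E^1(\Omega)$ is the function with $\phi+v_0\le u$. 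Finiteness of $e_\phi(v')$ follows by expanding $(dd^c(u+c\rho))^n$ into mixed terms and controlling each by Lemma \ref{lem: mixed energy inequality} or Lemma \ref{lem:increasing}. It is simplest to note $v'=(1+c)\bigl(\tfrac{1}{1+c}u+\tfrac{c}{1+c}(\rho+\phi)\bigr)-c\phi$; alternatively, use directly that $\rho+\phi\in\mathcal E^1_\phi(\Omega)$, which needs its own verification. This is the step I expect to be the main technical obstacle.

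Given the two bounds, $v'\le w\le\phi$, and Lemma \ref{comparaison principe boundary value} shows $w\in\mathcal N^a_\phi(\Omega)$ with the required $\mathcal E^1$ lower bound. That lemma would also give finite energy, but I will instead prove finiteness directly. By Lemma \ref{lem:MA contact set}, $(dd^c w)^n$ is carried by the contact set $\{w=u+\psi\}$. On this set $w\le u+M$, so $u\ge w-M$ there, and $\phi-w=\phi-u-\psi\le \phi-u+M$. The pluripotential maximum principle (Theorem \ref{thm:principe du max}) applied to $w-M\le u$ gives $\mathbf 1_{\{w=u+\psi\}}(dd^c w)^n\le\mathbf 1_{\{w=u+\psi\}}(dd^c u)^n$ where $\psi=-M$. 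Since $\psi$ is not constant, that comparison is not directly available in general. Instead, I use directly
\[
e_\phi(w)\le \int_\Omega(\phi-w)(dd^c w)^n\le \int_\Omega(\phi-v')(dd^c w)^n,
\]
and bound the right side by $2e_\phi(v')+2e_\phi(w)$ via Lemma \ref{lem: mixed energy inequality}. That bound is circular, so I replace it with the cleaner route of Lemma \ref{comparaison principe boundary value}, which yields $e_\phi(w)\le 2^{n+1}e_\phi(v')<\infty$ directly. This concludes $w\in\mathcal E^1_\phi(\Omega)$.
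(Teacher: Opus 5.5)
\textbf{Gap.} Your argument rests on $v'=u+(M/\delta)\rho\in\mathcal E^1_\phi(\Omega)$, i.e.\ on $e_\phi(v')<\infty$, which you flagged as unverified. It is false in general, so Lemma \ref{comparaison principe boundary value} only gives the vacuous bound $e_\phi(w)\le 2^{n+1}e_\phi(v')=+\infty$.

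Take $u=\phi$, which lies in $\mathcal E^1_\phi(\Omega)$, in the setting of Example \ref{example: E1}: $\Omega=\mathbb B^n$, $n\ge 4$, $(n-1)\alpha\le n-3$. Your $\rho$, the $\mathcal E^0$ solution of $(dd^c\rho)^n=dV$, is then a positive multiple of $\|z\|^2-1$. Every term in the expansion of $(dd^c(\phi+c\rho))^n$ is a positive measure, so
\[
e_\phi(\phi+c\rho)=c\int_\Omega(-\rho)\,(dd^c(\phi+c\rho))^n\ \ge\ n c^2\int_\Omega(-\rho)\,(dd^c\phi)^{n-1}\wedge dd^c\rho=+\infty
\]
for every $c>0$, by the computation of that example with $\gamma=1$. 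Both verification routes you sketch therefore fail:
\begin{itemize}
\item $\rho+\phi\notin\mathcal E^1_\phi(\Omega)$;
\item in the term-by-term expansion, the mixed term $\int(-\rho)(dd^cu)^{n-1}\wedge dd^c\rho$ is uncontrolled. Lemma \ref{lem: mixed energy inequality} does not apply, since $\rho\notin\mathcal N_\phi(\Omega)$.
\end{itemize}
Example \ref{example: E1} exists precisely to show that adding even an $\mathcal E^0$ function to an element of $\mathcal E^1_\phi(\Omega)$ can destroy finiteness of the $\phi$-energy. Your upper bound via maximality is fine. Your barrier $v'$ also correctly gives $w\ge \phi+v_0+c\rho$ with $v_0+c\rho\in\mathcal E^1(\Omega)$, which is the only use the paper makes of its own barrier $u+h$ ($h\in\mathcal E^1(\Omega)$, $h\le\psi$). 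But the barrier cannot carry the energy estimate.

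\textbf{Fix.} The energy has to be bounded directly, which is the route you started and then abandoned. The missing observation is that no maximum principle is needed on $K$.

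First assume $u-\phi$ is bounded. By Lemma \ref{lem:MA contact set}, $(dd^cw)^n$ is carried by $\{w=u+\psi\}$.
\begin{itemize}
\item On $K$, the integrand is $\phi-w=(\phi-u)-\psi$, which is bounded, and $(dd^cw)^n(K)<\infty$.
\item On the open set $\Omega\setminus K$ we have $\psi=0$, hence $w\le u$ there. Theorem \ref{thm:principe du max} then gives $\mathbf 1_{(\Omega\setminus K)\cap\{w=u\}}(dd^cw)^n\le \mathbf 1_{(\Omega\setminus K)\cap\{w=u\}}(dd^cu)^n$, so $\int_{\Omega\setminus K}(\phi-w)(dd^cw)^n\le e_\phi(u)$.
\end{itemize}

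For general $u$, set $u_t=\max(u,\phi-t)$ and $w_t=P(u_t+\psi)$; the bounded case gives $w_t\in\mathcal E^1_\phi(\Omega)$. Corollary \ref{cor: convexity of energy} and the contact property give
\[
E_\phi(w_t)\le E_\phi(u_t)+\int_\Omega(u_t-w_t)(dd^cw_t)^n=E_\phi(u_t)+\int_\Omega(-\psi)(dd^cw_t)^n .
\]
The right-hand side is bounded in $t$: use $E_\phi(u_t)\le E_\phi(u)$ and the locally uniform mass bound for the $w_t\ge w$. Since $w_t\searrow w$, Lemma \ref{lem: bounded energy} yields $w\in\mathcal E^1_\phi(\Omega)$.
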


\begin{proof}
	Fix $h\in  \mathcal E^1(\Omega)$, such that $h<0$ and $\phi+h\leq u$. Multiplying $h$ with a positive constant we can assume that $h\leq \psi$ on the support $K$ of $\psi$, hence on $\Omega$. It follows that $u+h \leq u + \psi$ in $\Omega$. This proves that $v:=P(u+\psi)$ is well defined and satisfies 
	\[
	\phi \geq v \geq u+h\geq \phi+ P(u-\phi)+h. 
	\]
	Since $P(u-\phi)+h\in \mathcal{E}^1(\Omega)\subset \mathcal N^a(\Omega)$, it remains to prove that 
	\begin{equation}
		\label{eq: proj in E1phi}
		\int_{\Omega}(\phi-v) (dd^c v)^n<+\infty. 
	\end{equation}
	We first assume that $u-\phi$ is bounded. Recall that $(dd^c v)^n$ is supported on the contact set $\{v=u+\psi\}$. Therefore,
	\[
	\int_{K} (\phi-v) (dd^c v)^n = \int_{K}(\phi-u) (dd^c v)^n + \int_{K}(- \psi) (dd^c v)^n  <+\infty, 
	\]
	because $(dd^c v)^n$ has finite mass on compact sets. On the other hand, in the open set $\Omega\setminus K$, $v\leq u$ and $(dd^c v)^n$ is concentrated on the contact set $\Omega\setminus K \cap \{v=u+\phi=u\}$. The pluripotential maximum principle then yields 
	\[
	{\bf 1}_{\Omega\setminus K}(dd^c v)^n = {\bf 1}_{(\Omega\setminus K) \cap \{u=v\}}(dd^c v)^n \leq {\bf 1}_{(\Omega\setminus K) \cap \{u=v\}} (dd^c u)^n. 
	\]
	We therefore get 
	\[
	\int_{\Omega \setminus K} (\phi-v) (dd^c v)^n \leq \int_{\Omega \setminus K} (\phi-u) (dd^c u)^n <+\infty. 
	\]
	Thus, \eqref{eq: proj in E1phi} is proved for the case when $u-\phi$ is bounded. 
	
	We now treat the general case. For $t>0$ we define $u_t:= \max(u,\phi-t)$, $v_t:=P(u_t+\psi)$.  By the previous step we have $v_t \in \mathcal E^1_{\phi}(\Omega)$. By convexity of $E_{\phi}$, we also have 
	\[
	E_{\phi}(v_t)-E_{\phi}(u_t) \leq \int_{\Omega}(u_t-v_t)(dd^c v_t)^n \leq  \int_{\Omega} (-\psi) (dd^c v_t)^n.
	\]
	Letting $t\to +\infty$, we see that 
	\[
	\sup_t E_{\phi}(v_t) \leq E_{\phi}(u)+ \int_K (-\psi) (dd^c v)^n <+\infty. 
	\]
By Lemma \ref{lem: bounded energy} we thus have $v\in \mathcal E^1_{\phi}(\Omega)$. 
\end{proof}
From the definition it follows that if $u \in \mathcal E_\phi (\Omega)$, then $P_\Omega (u-\phi) \in \mathcal E^1(\Omega)$ and $ P_\Omega(u-\phi) + \phi \leq u \leq \phi$. When $n = 1$, $P_\Omega (u-\phi) = G_\Omega (\mu_u)$ is the Green potential of the Riesz measure  $\mu :=  (1 \slash 2\pi) \Delta u$ and we have equality $ u = \phi +  P(u-\phi) = \phi + G_\Omega (\mu_u)$. This is known as the Riesz representation formula.

 The following example shows that  there is not such representation  formula in higher dimension,  proving that we cannot drop the finiteness energy condition $e_\phi(u) < + \infty$ in the definition of the class $\mathcal E_\phi (\Omega) $.
\begin{example}\label{example: E1}
 Let $\Omega=\mathbb B^{n}$ be the unit ball in $\mathbb C^{n}$ and define 
 \[
 \phi(z_1,...,z_n) = -(1-|z_1|^2-...-|z_{n-1}|^2)^{\alpha}, \, \, z \in \mathbb B^{n}, \, \, 0<\alpha<1.
 \]
  Then $\phi$ is a bounded maximal plurisubharmonic function in $\Omega$ hence  
  $\phi \in \mathcal M(\B^n)$. 
 
 We claim that for $n\geq 3$, we can find $v\in  \mathcal E^1(\Omega)$ such that $u := v + \phi \notin \mathcal E^1_{\phi}(\Omega)$.
 
 Let $v$ be the psh function in $\Omega$ defined by 
	\[
	v (z):= -(1-\|z\|^2)^{\gamma},
	\]
	 where $0<\gamma<1$. We compute  
	 \begin{flalign*}
	 	 dd^c v = \gamma (1-\|z\|^2)^{\gamma-1} \sum_{j} i dz_j \wedge d\bar z_j + \gamma(1-\gamma) (1-\|z\|^2)^{\gamma-2} d \|z\|^2 \wedge d^c \|z\|^2. 
	 \end{flalign*}
	 Thus, 
	 \[
	 (dd^c v)^{n} \sim   (1-\|z\|^2)^{n \gamma -n-1}dV(z). 
	 \]
 Thus $v \in \mathcal E^1(\Omega)$ if and only if $(n+1)\gamma -(n+1) >-1$, which is equivalent to $\gamma > n/(n+1)$. 

 We compute similarly 
 \[
 (dd^c \phi)^{n-1} \sim (1-|z_1|^2-...-|z_{n-1}|^2)^{(n-1)\alpha -n} \wedge_{j=1}^{n-1} idz_j \wedge d\bar z_j,
  \]
 and 
 \begin{flalign*}
 \int_{\Omega} (-v) (dd^c (\phi +v))^n & \geq \int_{\Omega} (-v) (dd^c \phi)^{n-1} \wedge dd^c v \\
 &\geq \int_{\Omega} (1-\|z\|^2)^{2\gamma-1} (1-\|z\|^2+|z_n|^2)^{(n-1)\alpha -n} dV(z).
 \end{flalign*}
	The last integral is $+\infty$ when $2\gamma + (n-1)\alpha -n\leq -1$. Thus, for $n\geq 3$, we can find $v\in \mathcal E^1(\Omega)$ such that $\phi+v \notin \mathcal E^1_{\phi}(\Omega)$. 
	
		For $n\geq 4$, $\gamma=1$, $(n-1)\alpha \leq n-3$, we see that $u\in \mathcal E^0(\Omega)$ but $u+\phi \notin \mathcal E^1_{\phi}(\Omega)$. 

\end{example}

\begin{lemma}\label{DerivEnergie}
	Let $u \in \mathcal{E}_{\phi}^1(\Omega)$ and  $\psi \in  C_c^0(\Omega)$ be a continuous function with compact support in $\Omega$. Then
	$$\frac{dE_{\phi} \circ P (u + t\psi)}{dt}_{\mid t = 0}= \int_{\Omega} (-\psi)(dd^c u)^n.$$
\end{lemma}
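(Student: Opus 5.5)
Set $u_t:=P(u+t\psi)$. By Lemma \ref{lem: proj is in E1 phi}, $u_t\in\mathcal E^1_\phi(\Omega)$ for all $t\in\R$, and $u_0=u$ because $u$ is psh. The plan is the standard squeeze argument of \cite{BBGZ13}: sandwich the difference quotient of $E_\phi(u_t)$ between two integrals, using the two-sided bound of Corollary \ref{cor: convexity of energy}, and then pass to the limit using the orthogonality relation of Lemma \ref{lem:MA contact set} together with continuity of Monge--Amp\`ere measures.

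\textbf{Step 1 (two-sided bounds).} Corollary \ref{cor: convexity of energy} gives
\[
\int_\Omega (u_t-u)(dd^c u_t)^n \le E_\phi(u)-E_\phi(u_t) \le \int_\Omega (u_t-u)(dd^c u)^n .
\]
On the right, $u_t\le u+t\psi$, so the upper bound is at most $t\int_\Omega \psi\,(dd^c u)^n$. On the left, Lemma \ref{lem:MA contact set} says $(dd^c u_t)^n$ is carried by the contact set $\{u_t=u+t\psi\}$. Hence the lower bound equals exactly $t\int_\Omega\psi\,(dd^c u_t)^n$. Therefore
\[
t\int_\Omega \psi\,(dd^c u_t)^n \le E_\phi(u)-E_\phi(u_t)\le t\int_\Omega \psi\,(dd^c u)^n .
\]
Dividing by $t$ (the inequalities flip for $t<0$) reduces the lemma to showing $\int_\Omega\psi\,(dd^c u_t)^n\to\int_\Omega\psi\,(dd^c u)^n$ as $t\to0$. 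Then both one-sided derivatives of $E_\phi(u_t)$ at $0$ equal $-\int_\Omega\psi\,(dd^c u)^n=\int_\Omega(-\psi)(dd^c u)^n$.

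\textbf{Step 2 (weak convergence of $(dd^c u_t)^n$).} Since $\psi$ is continuous with compact support, $u-|t|\,\|\psi\|_\infty\le u_t\le u+|t|\,\|\psi\|_\infty$. Indeed, $u-|t|\|\psi\|_\infty$ is a psh competitor lying below $u+t\psi$, which gives the left inequality. Thus $u_t\to u$ uniformly as $t\to0$. Uniform convergence implies convergence in capacity and allows monotone sandwiching. Concretely, take $t_k\to 0$ and set $\tilde u_k:=(\sup_{j\ge k}u_{t_j})^*$, which decreases to $u$ and stays within $\varepsilon_k\to0$ of $u_{t_k}$. Applying Proposition \ref{prop: convergence MA quasi-cont} (with $f_j\equiv\psi$, which is continuous and bounded) then gives $\int\psi\,(dd^c u_{t_k})^n\to\int\psi\,(dd^c u)^n$. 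All the functions involved lie in $\mathcal E^a(\Omega)$, since $u_t\ge u-C$ and $u\in\mathcal N^a_\phi$.

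\textbf{Main obstacle.} The only delicate point is the convergence in Step 2. Proposition \ref{prop: convergence MA quasi-cont} is stated for monotone sequences, while $u_t$ is not monotone in $t$ for a general $\psi$. I would first handle $\psi\ge0$ and $\psi\le0$ separately: for these, $t\mapsto u_t$ is monotone on each side of $0$, and one can compare with $u\pm|t|\|\psi\|_\infty$. If monotonicity is still lacking, I would localize instead. On a neighbourhood $D\Subset\Omega$ of $\operatorname{supp}\psi$, the uniform convergence $u_t\to u$ together with the standard fact that Monge--Amp\`ere measures of functions in $\mathcal E$ converge weakly under locally uniform convergence (via Cegrell's convergence in capacity theorem) would give the required weak convergence on $D$. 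Since $\psi$ vanishes outside $D$, this suffices.
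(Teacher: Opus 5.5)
Your argument is correct and is essentially the paper's proof. You use the two-sided bound from Corollary~\ref{cor: convexity of energy}, then the orthogonality relation of Lemma~\ref{lem:MA contact set} to turn $\int_\Omega (u_t-u)(dd^c u_t)^n$ into $t\int_\Omega\psi\,(dd^c u_t)^n$, and finally let $t\to 0^\pm$.

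One caveat concerns Step 2. The monotone regularization $\tilde u_k$ does not by itself carry the convergence of $(dd^c\tilde u_k)^n$ over to $(dd^c u_{t_k})^n$. What actually justifies the limit is your fallback: the uniform bound $|u_t-u|\le |t|\,\|\psi\|_\infty$, together with weak continuity of the Monge--Amp\`ere operator under uniform (hence capacity) convergence with a fixed lower bound in $\mathcal E$. The paper uses this step without comment.
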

\begin{proof}
	For $t>0$, the convexity of $E_{\phi}$, see Corollary \ref{cor: convexity of energy}, gives 
	$$\int_{\Omega} \frac{u-P(u+t\psi)}{t} (dd^c u)^n  \leqslant \frac{E_{\phi}(P(u+t\psi))- E_{\phi}(u)}{t} \leqslant  \int_{\Omega} \frac{u-P(u+t\psi) }{t}(dd^c P(u+t\psi))^n.
	$$
	Since $(dd^c P(u+t\psi))^n$ is supported on $\{P(u+t\psi)=u+t\psi\}$, we can continue the above inequalities as 
	$$\int_{\Omega} (-\psi) (dd^c u)^n \leqslant \frac{E_{\phi}(P(u+t\psi))- E_{\phi}(u)}{t} \leqslant  \int_{\Omega} (-\psi) (dd^c P(u+t\psi))^n.
	$$
	Letting $t\to 0^+$, we arrive at the formula for the right-derivative. The same argument works for the left derivative, finishing the proof. 
\end{proof}

 \begin{theorem}\label{thm: minimizer is a solution}
 Assume that  $u \in \mathcal{E}_\phi^1(\Omega)$ is a minimizer of the functional $F_\mu$ i.e. 
 $$
 F_\mu(u) = \inf_{v \in \mathcal{E}_{\phi}^1(\Omega)} F_\mu(v).
 $$ 
 There  $(dd^c u)^n = \mu$ on $\Omega$.
\end{theorem}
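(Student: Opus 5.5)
We use the projection argument of \cite{BBGZ13}. Fix a test function $\psi \in C^0_c(\Omega)$ and $t \in \mathbb R$, and set $u_t := P(u+t\psi)$. By Lemma \ref{lem: proj is in E1 phi}, $u_t \in \mathcal E^1_{\phi}(\Omega)$, so $u_t$ is an admissible competitor and $F_\mu(u_t) \geq F_\mu(u)$ for all $t$. Since $u$ is psh and $u \leq u+t\psi$, we also have $u_0 = P(u) = u$. Thus $t=0$ is a minimum of the real function
\[
g(t) := E_\phi(u_t) + \int_\Omega (u_t-\phi)\, d\mu .
\]
The derivative of the first term at $t=0$ is given by Lemma \ref{DerivEnergie}: it equals $\int_\Omega (-\psi)(dd^c u)^n$.

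The second term is not differentiable directly, because $u_t \neq u + t\psi$ in general. We therefore bound it by $h(t) := E_\phi(u_t) + \int_\Omega (u+t\psi-\phi)\,d\mu$. Since $u_t \leq u+t\psi$ and $\mu \geq 0$, we get $g(t) \leq h(t)$ for every $t$, while $g(0) = h(0) = F_\mu(u)$. Hence
\[
h(t) \geq g(t) \geq g(0) = h(0),
\]
so $t=0$ is also a minimum of $h$. The function $h$ is differentiable at $0$: the $\mu$-term is affine in $t$, because $\int_\Omega |\psi|\,d\mu < \infty$. This finiteness holds because $\mu$ is locally finite; this is implicit in the integrability hypothesis, or can be assumed as $\mu$ is a Radon measure. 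Combining with Lemma \ref{DerivEnergie},
\[
0 = h'(0) = \int_\Omega (-\psi)(dd^c u)^n + \int_\Omega \psi\, d\mu .
\]
This holds for every $\psi \in C^0_c(\Omega)$, so $(dd^c u)^n = \mu$ as Radon measures, hence as Borel measures.

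The main point to verify is that Lemma \ref{DerivEnergie} really gives a two-sided derivative, that is, that the right-hand bound
\[
\int_\Omega (-\psi)(dd^c P(u+t\psi))^n \to \int_\Omega (-\psi)(dd^c u)^n \quad \text{as } t\to 0 .
\]
For this I would use that $u_t$ increases to $u$ as $t\searrow 0$ when $\psi\leq 0$, with the appropriate monotonicity in general. I would also use that $u_t \geq u - |t|\sup|\psi|\,\chi$ for a suitable exhaustion $\chi \in \mathcal E^1(\Omega)$, which gives convergence in capacity. Weak convergence of $(dd^c u_t)^n$ then follows from Proposition \ref{prop: convergence MA quasi-cont}. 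If this continuity needs more care, one-sided derivatives suffice: apply the argument for $t>0$ to both $\psi$ and $-\psi$. With the right derivative of $E_\phi\circ P$ at $0$ equal to $-\int\psi(dd^c u)^n$, minimality gives $\int\psi\,d\mu \geq \int \psi (dd^c u)^n$ for every $\psi$. Replacing $\psi$ by $-\psi$ yields the reverse inequality, and hence equality.
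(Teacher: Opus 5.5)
Your proof is correct and is exactly the paper's argument: your $h(t)=E_\phi(P(u+t\psi))+\int_\Omega(u+t\psi-\phi)\,d\mu$ is the paper's auxiliary function $g$. It is minimized at $t=0$ because $P(u+t\psi)\le u+t\psi$, and Lemma~\ref{DerivEnergie} then gives $0=\int_\Omega(-\psi)(dd^c u)^n+\int_\Omega\psi\,d\mu$.

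Your closing remarks are not needed for this theorem, but the one-sided fallback does not bypass anything. The right derivative of $t\mapsto E_\phi(P(u+t\psi))$ already requires $\int_\Omega\psi\,(dd^c P(u+t\psi))^n\to\int_\Omega\psi\,(dd^c u)^n$ as $t\to 0^+$. For signed $\psi$ the family $P(u+t\psi)$ is not monotone, so Proposition~\ref{prop: convergence MA quasi-cont} does not apply directly; the natural input is instead the uniform bound $|P(u+t\psi)-u|\le |t|\sup|\psi|$.
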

\begin{proof}
Assume that $u \in \mathcal{E}_\phi^1(\Omega)$ is a minimizer of $F_\mu$. Fix $\chi \in \mathcal \mathcal C_c^0(\Omega)$ a continuous function with compact support.
By Lemma \ref{lem: proj is in E1 phi}, $P(u + t \chi) \in  \mathcal{E}_\phi^1(\Omega)$.
Define for $t \in \R$,
$$
g(t) := E_\phi\bigl(P(u + t \chi)\bigr) + \int_\Omega (u + t \chi - \phi) \, d\mu, \; t \in \mathbb R. 
$$
Observe that
 since $P(u + t \chi) \leqslant u + t \chi$, we have 
 $$g(t) \geqslant F_\mu(P(u + t\chi)) \geq F_\mu (u) = g(0).
 $$ 
 Hence $g$ achieves its minimum at $t=0$. Since $g'(0)$ exists we have  $g'(0) = 0$. Since the function $t \mapsto \int_\Omega (u-\phi + t \chi) \, d\mu$ is differentiable, with derivative given by $\int_\Omega \chi \, d\mu$,  using Lemma \ref{DerivEnergie}, it follows that
$$
\int_\Omega \chi (dd^c u)^n =  \int_\Omega \chi \, d\mu.
$$ 
Since this is true for all  $\chi \in \mathcal \mathcal C_c^0(\Omega)$, we infer 
 that $(dd^c u)^n =  \mu$ in the sense of Radon measures in $\Omega$. 
\end{proof}

\section{Proofs of the main results}\label{sect: proof of the main results}
\subsection{Proof of Theorem \ref{thmA}}
We prove the following result.
 \begin{theorem}
 Let $\mu$ be a non pluripolaire positive Borel measure on $\Omega$. Then the following properties are equivalent:
	\begin{enumerate}
 \item  $\mathcal{E}_\phi^1(\Omega) \subset \phi + L^1(\Omega,\mu); $
 \item There exists a constant $C >0$ such that for any $v \in \mathcal{E}_\phi^1(\Omega)$,
 $$
  \int_{\Omega} (\phi-v)d\mu  \leqslant C \max(E_\phi(v)^{\frac{1}{2}}+ 1)
  $$
 \item  there exists $u \in \mathcal{E}_\phi^1(\Omega)$ such that $(dd^c u)^n = \mu$.
	\end{enumerate}
	Moreover the solution is unique.
\end{theorem}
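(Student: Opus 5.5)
The plan is to prove the cycle $(1)\Rightarrow(2)\Rightarrow(3)\Rightarrow(1)$ and to get uniqueness from Proposition \ref{prop:uniqueness}.

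\textbf{$(1)\Rightarrow(2)$.} This is Lemma \ref{lem:quantitative integrabilily}. Since $e_\phi(v)$ and $E_\phi(v)$ are comparable up to the factor $n+1$, the same bound holds with either quantity after enlarging $C$.

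\textbf{$(3)\Rightarrow(1)$.} Let $u\in\mathcal E^1_\phi(\Omega)$ with $(dd^c u)^n=\mu$. For any $v\in\mathcal E^1_\phi(\Omega)$, Lemma \ref{lem: mixed energy inequality} gives
\[
\int_\Omega(\phi-v)\,d\mu=\int_\Omega(\phi-v)(dd^cu)^n\le 2e_\phi(v)+2e_\phi(u)<+\infty .
\]
Hence $\mathcal E^1_\phi(\Omega)\subset \phi+L^1(\Omega,\mu)$.

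\textbf{Uniqueness.} If $u_1,u_2$ are two solutions, then $(dd^c u_1)^n=(dd^c u_2)^n$, so Proposition \ref{prop:uniqueness} applied in both directions gives $u_1\ge u_2$ and $u_2\ge u_1$.

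\textbf{$(2)\Rightarrow(3)$.} This is the main part, and I would do it in two stages.

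\emph{Stage 1: the case $\mu\le A(dd^c\psi)^n$.} Suppose first that $\mu\le A(dd^c\psi)^n$ with $\psi\in PSH(\Omega)\cap L^\infty(\Omega)$ of finite total mass. I would adapt Lemma \ref{continuity I mu} to show that $I_\mu$ is continuous on each $\mathcal E^1_{\phi,C}$:
\begin{itemize}
\item Lemma \ref{lem:Blocki type inequality} gives a uniform $L^2(\mu)$ bound on $u-\phi$, since $\mu$ is dominated by $(dd^c\psi)^n$.
\item The Banach--Saks argument then goes through as before, because $\mu$ charges no pluripolar set.
\end{itemize}
Coercivity (Proposition \ref{prop:coercivity}) together with Lemma \ref{lem:existence minimizer} produces a minimizer of $F_\mu$. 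By Theorem \ref{thm: minimizer is a solution} this minimizer solves $(dd^c u)^n=\mu$. The variational step needs $I_\mu(u+t\chi)$ to be linear in $t$, which is clear.

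\emph{Stage 2: general $\mu$ satisfying (2).} By Cegrell's decomposition, a non-pluripolar measure has the form $\mu=f\,(dd^c\psi)^n$ with $\psi\in\mathcal E^0(\Omega)$ and $f\in L^1_{loc}((dd^c\psi)^n)$. Set $\mu_j:=\min(f,j){\bf 1}_{\Omega_j}(dd^c\psi)^n$. Then $\mu_j\le\mu$ and $\mu_j\nearrow\mu$. Each $\mu_j$ satisfies (2) with the same constant $C$, so Stage 1 gives solutions $u_j\in\mathcal E^1_\phi(\Omega)$.

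\emph{Uniform energy bound.} Write $e_j:=e_\phi(u_j)$. Using (2) for $\mu$ and $\mu_j\le\mu$,
\[
e_j=\int(\phi-u_j)\,d\mu_j\le C\bigl(1+(n+1)^{1/2}e_j^{1/2}\bigr).
\]
Hence $\sup_j e_j<\infty$, and therefore $\sup_j E_\phi(u_j)<\infty$.

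\emph{Monotonicity.} Since $\mu_j\le\mu_{j+1}$, Proposition \ref{prop:uniqueness} gives $u_j\ge u_{j+1}$. So $(u_j)$ decreases to some $u$, and $u\in\mathcal E^1_\phi(\Omega)$ by Lemma \ref{lem: bounded energy}.

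\emph{Passing to the limit.} Continuity of the Monge--Amp\`ere operator along decreasing sequences in $\mathcal E$ (Cegrell) yields $(dd^c u)^n=\lim_j\mu_j=\mu$.

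\textbf{Main obstacle.} I expect the hardest step to be the continuity of $I_{\mu_j}$ in Stage 1, namely checking that the Banach--Saks argument works for $\mu_j$ bounded by $(dd^c\psi)^n$ with $\psi$ bounded. If that fails, my fallback is to solve the equation with $\mu_j$ replaced by $(dd^c\max(\psi,-k))^n$-type measures, obtained by Ko\l odziej/Cegrell, and then run the same monotone argument.
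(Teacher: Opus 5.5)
Your proposal is correct and follows essentially the same route as the paper. Specifically, you prove (1)$\Rightarrow$(2) by the quantitative integrability lemma, (3)$\Rightarrow$(1) by the mixed energy inequality, and uniqueness by Proposition~\ref{prop:uniqueness}. For (2)$\Rightarrow$(3) you use the same chain of steps:
\begin{enumerate}
\item Cegrell's decomposition $\mu=f(dd^c\psi)^n$;
\item truncation to measures dominated by a bounded Monge--Amp\`ere measure, where the Banach--Saks continuity lemma, coercivity and the minimizer argument apply;
\item a uniform energy bound obtained from (2);
\item a decreasing limit.
\end{enumerate}

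The only difference is cosmetic. You truncate density and support in one step, whereas the paper first handles compactly supported $\mu$ (where Cegrell's theorem gives $f\in L^1$) and then exhausts by compacts. Your one-step version needs the global decomposition with $f\in L^1_{loc}$, which follows by applying Cegrell's finite-mass theorem to a finite measure with the same null sets as $\mu$.
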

\begin{proof}
The implication \textbf{(1) $\Rightarrow$ (2)} follows from Lemma \ref{lem:quantitative integrabilily}
and  the implication \textbf{(3) $\Rightarrow$ (1)} follows from Lemma \ref{lem: mixed energy inequality}

Let us prove that \textbf{(2) $\Rightarrow$ (3)}.

 We assume first that $\mu$ has a compact support. By the generalized Radon–Nikodym theorem (see \cite[Theorem 6.3]{Ceg98}), there exist $h \in \mathcal{E}^0(\Omega)$  and $f \in L^1((dd^c h)^n)$ such that  
$$
 \mu = f (dd^c h)^n.
$$
 Consider the measures  
 $$
 \mu_j := \min(f, j^n) (dd^c h)^n, j \in \N.
 $$  
 Since $\mu_j \leq  (dd^c h_j)^n$ where $h_j := j h \in \mathcal E^0(\Omega)$, it follows from Lemma 
 \ref{continuity I mu}, Lemma \ref{lem:existence minimizer} and Theorem \ref{thm: minimizer is a solution},  that  
 for each $j\in \N$, there exists $u_j \in \mathcal{E}^1_\phi(\Omega)$ such that  $(dd^c u_j)^n = \mu_j$.
 Since $\mu_j \leq \mu$ it follows from $(2)$ that for any $j\in \N$,
 $$
 E_\phi (u_j) = \int_{\Omega} (\phi-u_j)d\mu_j \leq  \int_{\Omega} (\phi-u_j)d\mu  \leqslant C \max(E_\phi(u_j)^{\frac{1}{2}}+ 1),
 $$
 hence $\sup_j E_\phi(u_j) < + \infty$.
 
 Since $(\mu_j)$ is increasing, by the comparison principle, the sequence $(u_j)$ is decreasing,  
 hence $u = \lim_j u_j \in  \mathcal{E}^1_\phi(\Omega)$.
 By the continuity of the Monge-Ampère operator it follows that $(dd^c u)^n = \mu$.

 When $\mu$ has a non compact support, we take an increasing sequence of compact sets $(K_j)$ conveging to $\Omega$ and set $\mu_j := {\bf 1}_{K_j} \mu$.
 By the previous analysis there exsists $u_j \in  \mathcal{E}^1_\phi(\Omega)$ such that
 $(dd^c u_j)^n = \mu_j$ for any $j\in \N$.
 The same analysis as in the previous case shows that $(u_j)$ decreases to $u \in \mathcal{E}^1_\phi(\Omega)$ which satisfies $(dd^c u)^n = \mu$.
	
 The uniqueness follows from Poposition  \ref{prop:uniqueness}.
\end{proof}
We now prove  Corollary \ref{coroB}. 
\begin{proof} Assume that $\mu \leq (dd^c \psi)^n$ for some $\psi\in \mathcal E^1(\Omega)$.
It is enough to prove that $ \mathcal{E}^1_\phi(\Omega) \subset \phi + L^1(\Omega,\mu)$.
Indeed let $u\in  \mathcal{E}^1_\phi(\Omega)$. Then $v:=P(u-\phi) \in \mathcal E^1(\Omega)$  and 
\[
\int_{\Omega} (-v) (dd^c v)^n \leq \int_{\Omega} (\phi-u) (dd^c u)^n  <+\infty. 
\]
Hence by Cegrell \cite{Ceg04}
$$
\int_\Omega (\phi-u) d \mu \leq \int_\Omega (-v) (dd^c \psi)^n \leq E_0(v)^{1 \slash n+1} E_0(\psi)^{n \slash n+1}< +\infty.
$$
\end{proof}
\begin{example} \label{example: finite entropy} Let $ \mu := f d V$ be a positive Borel measure with finite entropy i.e.   
$$
\int_\Omega f \log(1 + f)  d V < +\infty.
$$
We claim that $\mathcal E^1(\Omega) \subset L^1(\Omega,\mu)$.

Indeed let $v \in  \mathcal E^1(\Omega)$. By \cite{ACKPZ09}, \cite{BB22},  for any $\alpha >0$, $\int_\Omega e^{- \alpha v} d V < + \infty$.
Let $\chi(t) := e^t - t -1$ for $t \geq 0$. This is a Young function on $\R^+$ such that 
$\int_\Omega \chi(-v) d V < + \infty$. The conjugate function of $\chi$ is defined by 
$$
\chi^*(s) := \sup_{t \geq 0} \{ st - \chi(t)\} = (s+1) \log (s+1) \simeq_{s \to + \infty} s \log s.
$$
Then $\int_\Omega \chi^*(f) d V < +\infty$. 
Observe that by definition we have $s t \leq \chi(t) + \chi^*(s) $ for any $s, t \geq 0$.
Therefore
$$
\int_\Omega (- v) f dV \leq \int_\Omega \chi(-v) d V + \int_\Omega \chi^*(f) d V < + \infty,
$$
which proves the Claim.
By Theorem \ref{thmA} (see also \cite{Ceg98}) there exists $v \in \mathcal E^1(\Omega)$ such that
$\mu = (dd^c v)^n$. Then by Corollary \ref{coroB} there exists $u \in \mathcal E^1_\phi(\Omega)$ such that
$(dd^c u)^n = \mu$ on $\Omega$.
\end{example}
\subsection{Proof of Theorem \ref{thmC}}
In this section we show that Theorem \ref{thmC} is a reformulation of Theorem \ref{thmA}, once we use the toric correspondence between the real Monge-Ampère operator and the complex Monge-Ampère operator which was already deeply used in \cite{LZ25}. 

Let us recall the setting, referring   the interested readers to \cite{LZ25} for more details. 

Let $D \Subset \R^n$ be a bounded {\it convex} domain. We denote by ${\rm CV}(D)$ the positive cone of real valued convex functions in $D$, and by ${\rm CV}_0(D)$ the positive cone of  real valued convex functions in $D$ that are continuous on $\bar D$ and vanish on the boundary $\partial D$.  

Define $\Omega=L^{-1}(D)$, where $L$ is the Log map defined on $(\mathbb C^*)^n$ by 
\[
L(z):= (\log |z_1|,...,\log|z_n|). 
\]
As shown in \cite{LZ25}, $\Omega$ is a bounded hyperconvex domain in $\mathbb C^n$. A defining function for $\Omega$ can be given as 
\[
\rho(z):= - {\rm dist}(L(z),\mathbb R^n \setminus D).
\] 
The real Monge-Amp\`ere measure of a convex function $u$ acting on a Borel set $K$ is defined as 
$$
\int_K \text{M}_\R(u) = \vert \nabla u (K)\vert_n,
$$
where $\vert \cdot \vert_n$ is the $n$-dimensional volume, $\nabla u (K) := \bigcup_{a\in K} \nabla u (a)$, and
$$
  \nabla u (a) := \{ \xi \in \R^n ; u(x) \geq u(a) + \langle \xi, x-a\rangle, \forall x \in D\}. 
$$ 
The following deep connection between the real and complex Monge-Amp\`ere operators was discovered in \cite{BB13} (see also  \cite{CGSZ19}): if $u$ is convex and $\widehat{u}=u\circ L$ then for any continuous test function $\chi$ in $D$, 
\begin{equation} \label{eq:MAR-MAC}
 \int_\Omega \chi \circ L (dd^c \widehat{u})^n = \int_D \chi \mathrm{M}_\R(u).
\end{equation}
This means that $L_*((dd^c \widehat u)^n) = \mathrm{M}_\R(u)$ in the sense of Borel measures on $D$.

Recall the action of the unit torus $\mathbb T^n =(\mathbb S^1)^n$ on $(\C^*)^n$ defined by for $(\zeta,z) \in \mathbb T^n \times (\C^*)^n$ by the formula 
$$
\tau_\zeta(z) = \zeta \cdot z := (\zeta_1 z_1, \cdots, \zeta_n z_n),
$$
which defines a unitary automorphism $ \tau_\zeta : \Omega \longrightarrow \Omega$.

Observe that   $\chi \longrightarrow \widehat \chi := \chi \circ L$ is  a one-to-one correspondance between  continuous test functions on $D$ and  continuous toric test function on $\Omega$.

This induces a one-to-one correspondence between Borel measures on $D$ and toric Borel measure on $\Omega$. 
Recall that a toric Borel measure  on $\Omega$ is a Borel measure which is invariant by this action i.e. $(\tau_\zeta)_*(\mu) = \mu$ for any $\zeta \in \mathbb T^n$.

\begin{lemma}\label{lem:torification}
	If  $\nu$ is a Borel measures on $D$  then there exists a unique toric measure $\mu := \widehat \nu$ on $\Omega$ such that $L_*(\mu) = \nu$ in the sense of Borel measures on $D$. Moreover, $\mu$ vanishes on pluripolar sets. 
\end{lemma}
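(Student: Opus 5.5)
The plan is to construct $\mu$ by averaging $\nu$ over the fibres of $L$, which are real $n$-tori. The map $L:\Omega\to D$ restricted to $\Omega\subset(\C^*)^n$ is a trivial principal $\mathbb T^n$-bundle. Indeed, the map $(x,\zeta)\mapsto (e^{x_1}\zeta_1,\dots,e^{x_n}\zeta_n)$ is a homeomorphism $\Phi: D\times\mathbb T^n\to\Omega$, and it satisfies $L\circ\Phi(x,\zeta)=x$.

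\emph{Existence.} Let $\sigma$ denote the normalized Haar measure on $\mathbb T^n$, and define
\[
\widehat\nu:=\Phi_*(\nu\otimes\sigma).
\]
Since $\sigma$ is invariant under rotations and $\tau_\eta\circ\Phi(x,\zeta)=\Phi(x,\eta\zeta)$, the measure $\widehat\nu$ is toric. Since $\sigma$ is a probability measure, $L_*\widehat\nu=\nu$.

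\emph{Uniqueness.} Let $\mu$ be toric with $L_*\mu=\nu$. Pull it back to $\mu':=\Phi^{-1}_*\mu$ on $D\times\mathbb T^n$. This measure is invariant under the $\mathbb T^n$-action on the second factor, and its first marginal is $\nu$. Disintegrate $\mu'$ with respect to $\nu$ as $\mu'=\int \mu'_x\,d\nu(x)$. If $\nu$ is only locally finite, first restrict to compact subsets of $D$, where everything is finite. By the uniqueness of the disintegration and the invariance of $\mu'$, the conditional measures $\mu'_x$ are rotation invariant for $\nu$-a.e. $x$. Rotation invariance can be checked on a countable dense set of rotations, and then extended by continuity. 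A rotation invariant probability measure on $\mathbb T^n$ is $\sigma$, so $\mu'_x=\sigma$ and $\mu'=\nu\otimes\sigma$.

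A more elementary route to uniqueness is also available. For test functions $F(x,\zeta)=\chi(x)\zeta^m$ with $m\in\mathbb Z^n\setminus\{0\}$, rotating by $\eta$ multiplies $\int F\,d\mu'$ by $\eta^m$. Invariance then forces $\int F\,d\mu'=0$. The case $m=0$ is fixed by $L_*\mu=\nu$. Linear combinations of such $F$ are dense in $C_c(D\times\mathbb T^n)$ by Stone--Weierstrass, which determines $\mu'$. I would use this Fourier argument.

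\emph{Non-charging of pluripolar sets.} This is the main point. Let $E\subset\Omega$ be pluripolar. By Fubini,
\[
\widehat\nu(E)=\int_D \sigma(\{\zeta:\Phi(x,\zeta)\in E\})\,d\nu(x).
\]
It therefore suffices to show that for every $x\in D$ the slice $E_x:=E\cap L^{-1}(x)$ has $\sigma$-measure zero on the torus $L^{-1}(x)$. The torus $L^{-1}(x)$ is a totally real $n$-dimensional torus, namely the distinguished boundary of a polydisc. It is non-pluripolar, and its Haar measure is the Monge--Amp\`ere measure of the bounded psh function $\max_j \log^+(|z_j|/e^{x_j})$. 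Equivalently, Haar measure on the torus is the equilibrium measure of the polydisc. Since Monge--Amp\`ere measures of bounded psh functions do not charge pluripolar sets (Bedford--Taylor), $\sigma(E_x)=0$.

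There is one subtlety: $E$ must be contained in a Borel pluripolar set for the slices to be measurable. This holds because a pluripolar set lies in $\{w=-\infty\}$ for some psh $w$, which is a $G_\delta$ set. The argument is completed by integrating the slice measures, which all vanish, against $\nu$.
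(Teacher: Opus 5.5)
Your proof is correct, but it reaches the conclusion by a different route from the paper, and the difference matters in the pluripolar part.

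\textbf{Existence and uniqueness.} The paper never trivializes $\Omega\cong D\times\mathbb T^n$. It defines $\mu$ on continuous test functions by $\int_\Omega\psi\,d\mu:=\int_D\chi\,d\nu$, where the torification satisfies $\psi^\tau=\chi\circ L$. Uniqueness then comes from the fact that a toric measure cannot distinguish $\psi$ from $\psi^\tau$. Your formula $\widehat\nu=\Phi_*(\nu\otimes\sigma)$, together with the Fourier-mode argument, is the same mechanism: averaging over $\mathbb T^n$ kills the modes $m\neq 0$. Your version is more explicit, since it produces a genuine measure directly and gives Fubini for free.

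\textbf{Pluripolar sets.} The paper argues as follows.
\begin{itemize}
\item It first restricts $\nu$ to some $D'\Subset D$.
\item Josefson's theorem gives a global $\psi\in{\rm PSH}(\Omega)$ with $\psi\le 0$ and $E\subset\{\psi=-\infty\}$.
\item The torification is toric, so $\psi^\tau=v\circ L$ with $v$ convex, hence finite and bounded on $D'$.
\item Therefore $\int_\Omega(-\psi)\,d\mu=\int_D(-v)\,d\nu<+\infty$, so $\mu(E)=0$.
\end{itemize}
Note that $v(x)$ is exactly the $\sigma$-average of $\psi$ over the fibre $L^{-1}(x)$. So the paper's argument is also fibrewise, and it proves your slice claim implicitly.

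The two proofs differ only in how they show that a psh function is Haar-integrable on each torus $L^{-1}(x)$. You use Bedford--Taylor together with the identification of Haar measure with $(dd^c\max_j\log^+(|z_j|e^{-x_j}))^n$. The paper uses that a toric psh function is a convex function of $L(z)$, hence finite everywhere once it is not $\equiv-\infty$.

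\textbf{What each buys.} Your route does not need Josefson: a Borel pluripolar hull built from countably many local $G_\delta$ sets suffices for measurability. However, it rests on the classical computation of the equilibrium measure of a polydisc, which you should cite. The paper's route is shorter, given the torification machinery it reuses later. It also proves more: for $D'\Subset D$ and $\nu$ locally finite, every psh function on $\Omega$ not identically $-\infty$ is integrable against $\mathbf 1_{L^{-1}(D')}\mu$. That is stronger than $\mu$ merely not charging pluripolar sets.
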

\begin{proof}
	The condition $L_*(\mu) = \nu$ on $D$ defines $\mu$ in a unique way on continuous  toric test functions 
$\psi$ on $\Omega$ by the formula
$$
\int_\Omega \widehat \chi  d \mu = \int_\Omega \chi \circ L d\mu = \int_D \chi  d\nu,
$$
where $\chi$ any continuous test function on $D$. 

Then we extend $\mu$ to all continuous test functions on $\Omega$ by the formula
$$
\int_\Omega \psi d \mu :=  \int_\Omega  \psi^\tau d \mu,
$$
where 
\begin{equation}\label{eq: torification}
	\psi^\tau (z) :=  \int_{\mathbb T^n} \psi \circ \tau_\zeta(z) d \sigma_{\mathbb T^n} (\zeta)
\end{equation}
is the "torification" of $\psi$.

	We take a pluripolar set $E\Subset \Omega$ and prove that $\mu(E)=0$. We can assume that $\nu(D)<+\infty$ by considering ${\bf 1}_{D'}\nu$, where $E\subset D'\Subset D$. By Josefson's theorem (see \cite{Jos78}, \cite[Theorem 4.40]{GZbook}), we can find $\psi \in {\rm PSH}(\Omega)$, $\psi \leq 0$, such that $E\subset \{\psi=-\infty\}$. The torification of $\psi$ is a plurisubharmonic function which is $\mathbb T^n$-invariant, hence we can write $\psi^{\tau}= v\circ L$, for some convex function $v$. Since $v$ is bounded (because $v\leq 0$ and it is convex) it thus follows that 
	\[
	\int_{\Omega} (-\psi) d\mu = \int_{\Omega} (-\psi^{\tau}) d\mu = \int_D (-v) d\nu <+\infty.
	\]
Therefore $\mu(E)\leq \mu(\{\psi=-\infty\})=0$, as desired.  
\end{proof}

Let $\varphi: D\rightarrow \mathbb R$ be a maximal convex function in $D$. Here, maximal means $\mathrm{M}_{\mathbb R}(\varphi)=0$ in $D$. We stress that we do not assume $\phi$ is bounded in $D$. Then $\phi:=\varphi\circ L$ is a maximal plurisubharmonic function in $\Omega$ which is invariant by the action of $\mathbb T^n$.

\begin{definition}
	A convex function $v\in {\rm CV}_0(D)$ belongs to $\mathcal E^1_0(D)$ if $\int_D (-v)\mathrm{M}_{\mathbb R}(v)<+\infty$. 
	
	We let $\mathcal E^1_{\varphi}(D)$ denote the set of all convex functions $u\in {\rm CV}(D)$ such that 
	\[
	\lim_{z\to \partial D}(u(z)-\varphi(z))=0\; ; \;  e_{\varphi}(u):=\int_{D}(\varphi-u) \mathrm{M}_{\mathbb R}(u)<+\infty.
	\]   
\end{definition} 

As shown in \cite{Ceg98}, any function $u$ in the Cegrell's class $\mathcal E^1(\Omega)$ satisfies 
\[
\limsup_{z \to w} u(z) =0, \; w\in \partial \Omega,
\]
but in general we do not have $\lim_{z \to w} u(z) =0$. A crucial contribution of \cite[Proposition 7.3]{LZ25} is that this holds for those  functions that are $(\mathbb S^1)^n$-invariant. More precisely,  $u\circ L \in \mathcal N_0(\Omega)$ if and only if $u \in \mathcal {\rm CV}_0(D)$. A direct consequence of \cite[Proposition 7.3]{LZ25} is the following. 

\begin{proposition}\label{prop: energy MAR-MAC}
	Let $v\in {\rm CV}(D)$. Then $\hat{v} := v\circ L$ belongs to $\mathcal E^1_{\phi}(\Omega)$ if and only if $v\in \mathcal E^1_{\varphi}(D)$.  In this case we have
 $$
  \int_\Omega (\phi-\hat{v}) (dd^c \hat{v})^n = \int_D (\varphi-v){\rm M}_\R(v).
 $$
\end{proposition}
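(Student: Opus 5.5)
The plan is to prove the two directions separately, then obtain the energy identity from the Bedford--Brendle correspondence \eqref{eq:MAR-MAC} applied to non-negative functions by monotone approximation.

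\emph{Energy identity.} Assume first that $v\le\varphi$. Then $\phi-\hat v=(\varphi-v)\circ L$, and the function $\varphi-v\ge 0$ is continuous on $D$. Approximate it from below by an increasing sequence of compactly supported continuous functions $\chi_k\ge 0$. Applying \eqref{eq:MAR-MAC} to each $\chi_k$ and passing to the limit by monotone convergence on both sides gives
\[
\int_\Omega(\phi-\hat v)(dd^c\hat v)^n=\int_D(\varphi-v){\rm M}_\R(v)\in[0,+\infty].
\]
The same argument gives $L_*\big((\phi-\hat v)(dd^c\hat v)^n\big)=(\varphi-v){\rm M}_\R(v)$. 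So once the boundary conditions are shown to correspond, finiteness of the two energies is equivalent.

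\emph{Direction $\hat v\in\mathcal E^1_\phi(\Omega)\Rightarrow v\in\mathcal E^1_\varphi(D)$.} Since $\hat v\le\phi$, we have $v\le\varphi$. Because $\hat v$ is toric, its envelope $w:=P(\hat v-\phi)$ is $\mathbb T^n$-invariant, by uniqueness of the envelope under the unitary maps $\tau_\zeta$. Moreover $w\in\mathcal E^1(\Omega)\subset\mathcal N_0(\Omega)$. Writing $w=w_0\circ L$ with $w_0$ convex and nonpositive, \cite[Proposition 7.3]{LZ25} gives $w_0\in{\rm CV}_0(D)$, so $w_0(x)\to0$ as $x\to\partial D$. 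From $w_0+\varphi\le v\le\varphi$ we get $\lim_{x\to\partial D}(v-\varphi)=0$. The energy identity then yields $e_\varphi(v)<\infty$.

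\emph{Direction $v\in\mathcal E^1_\varphi(D)\Rightarrow\hat v\in\mathcal E^1_\phi(\Omega)$.} This is the main obstacle, because we must produce a lower barrier in $\mathcal E^1(\Omega)$.
\begin{enumerate}
\item First show $v\le\varphi$. Since $\varphi$ is maximal, ${\rm M}_\R(\varphi)=0$, and $v-\varphi\to0$ at $\partial D$. The Alexandrov comparison principle on the sets $\{v>\varphi+\varepsilon\}\Subset D$ then gives $v\le\varphi+\varepsilon$ for every $\varepsilon>0$.
\item Next consider the convex envelope $w_0:=\sup\{g\in{\rm CV}(D):g\le v-\varphi\}$. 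It is the real analogue of $P(v-\varphi)$. It satisfies $w_0\le 0$, and the constant $-\sup|v-\varphi|$ is not available because $v-\varphi$ need not be bounded, so some care is needed here.
\item To show $w_0\to0$ at $\partial D$, note that $v-\varphi\to0$ at $\partial D$. For each $\varepsilon>0$, a cone-type convex function equal to $-\varepsilon$ near $\partial D$ and very negative inside lies below $v-\varphi$ on the compact set where $v-\varphi<-\varepsilon$; this uses that $v-\varphi$ is locally bounded below. Hence $\liminf_{x\to\partial D}w_0\ge-\varepsilon$, so $w_0\in{\rm CV}_0(D)$.
\item The real analogue of Lemma~\ref{lem:MA contact set}, namely that ${\rm M}_\R(w_0)$ is carried by the contact set together with ${\rm M}_\R(w_0)\le{\rm M}_\R(v)$ there, gives $\int_D(-w_0){\rm M}_\R(w_0)\le e_\varphi(v)<\infty$. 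Thus $w_0\in\mathcal E^1_0(D)$.
\end{enumerate}

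Now apply the complex side. By \cite[Proposition 7.3]{LZ25}, $\hat w_0:=w_0\circ L\in\mathcal N_0(\Omega)$, and by \eqref{eq:MAR-MAC} it has finite energy $\int(-\hat w_0)(dd^c\hat w_0)^n$, since the complex and real energies agree. To conclude $\hat w_0\in\mathcal E^1(\Omega)$, I would use Cegrell's characterization of $\mathcal E^1$ as the set of functions in $\mathcal N_0$ (in fact $\mathcal N^a$) with finite energy, which is available in \cite{Ceg08}. Then $\hat w_0+\phi\le\hat v\le\phi$, and the energy identity gives $e_\phi(\hat v)<\infty$. Hence $\hat v\in\mathcal E^1_\phi(\Omega)$.

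The delicate points are step 3 above and the identification $\mathcal N_0\cap\{\text{finite energy}\}\subset\mathcal E^1$. If that identification is not directly citable, I would instead approximate $w_0$ by functions in ${\rm CV}_0(D)$ that are bounded with finite Monge--Amp\`ere mass, giving elements of $\mathcal E^0(\Omega)$, and pass to the decreasing limit with uniformly bounded energy.
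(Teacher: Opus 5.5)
Your argument is correct and follows the route the paper has in mind. The paper states the proposition only as a direct consequence of \cite[Proposition 7.3]{LZ25} together with the correspondence \eqref{eq:MAR-MAC}, and you supply exactly those details: the toric envelope $P(\hat v-\phi)\in\mathcal E^1(\Omega)\subset\mathcal N_0(\Omega)$ for one direction, the convex envelope of $v-\varphi$ as an $\mathcal E^1$ barrier for the other, and the push-forward of $(\phi-\hat v)(dd^c\hat v)^n$ for the energy identity.

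Two small slips should be fixed. First, $v-\varphi$ is automatically bounded, since it is continuous on $D$ with limit $0$ at $\partial D$. Second, the barrier in step 3 cannot literally equal $-\varepsilon$ on a collar of $\partial D$, because a convex function that is constant near the boundary is constant; use instead $-\varepsilon-M\,{\rm dist}(x,\mathbb R^n\setminus D)$, which is convex because $D$ is convex.
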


\begin{lemma}\label{lem: torification in E1}
	If $u\in \mathcal E^1_{\phi}(\Omega)$ then $u^{\tau}\in \mathcal E^1_{\phi}(\Omega)$ and $E_{\phi}(u^{\tau})\leq E_{\phi}(u)$. 
\end{lemma}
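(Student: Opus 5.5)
The plan is to exploit convexity and lower semicontinuity of $E_\phi$ (Corollary \ref{monotonicity-convexity}, Theorem \ref{thm:compactness}), together with the fact that $\phi$ is $\mathbb T^n$-invariant. First, each rotated function $u\circ\tau_\zeta$ lies in $\mathcal E^1_\phi(\Omega)$ and satisfies $E_\phi(u\circ\tau_\zeta)=E_\phi(u)$. Indeed, $\tau_\zeta$ is a unitary automorphism of $\Omega$ that preserves $\phi$. It therefore preserves $\mathcal E^1(\Omega)$: if $v+\phi\le u\le\phi$, then $v\circ\tau_\zeta+\phi\le u\circ\tau_\zeta\le \phi$. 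It also commutes with $dd^c$ and with the Cegrell Monge--Amp\`ere operator, so $(dd^c (u\circ\tau_\zeta))^p\wedge(dd^c\phi)^{n-p}=\tau_\zeta^*\big((dd^c u)^p\wedge (dd^c\phi)^{n-p}\big)$. Every term in the definition of $E_\phi$ is thus invariant under the change of variables.

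Next, write $u^\tau=\int_{\mathbb T^n}u\circ\tau_\zeta\,d\sigma(\zeta)$ as a limit of finite convex combinations. Fix a sequence of finite partitions of $\mathbb T^n$ into pieces of small diameter, choose points $\zeta_{k,i}$ in them with weights $\sigma_{k,i}$, and set $u_k:=\sum_i \sigma_{k,i}\, u\circ\tau_{\zeta_{k,i}}$. Iterating the convexity of $E_\phi$ along segments (Corollary \ref{monotonicity-convexity}), which extends to finite convex combinations by induction, gives $u_k\in\mathcal E^1_\phi(\Omega)$ and $E_\phi(u_k)\le \sum_i\sigma_{k,i}E_\phi(u\circ\tau_{\zeta_{k,i}})=E_\phi(u)$. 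So all $u_k$ lie in $\mathcal E^1_{\phi,C}$ with $C=E_\phi(u)$.

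The remaining step is $u_k\to u^\tau$ in $L^1_{loc}(\Omega)$. Once this is known, Lemma \ref{lem: bounded energy} (or the compactness and lower semicontinuity in Theorem \ref{thm:compactness}) yields $u^\tau\in\mathcal E^1_\phi(\Omega)$ and $E_\phi(u^\tau)\le\liminf_k E_\phi(u_k)\le E_\phi(u)$. I would first note that $u^\tau$ is psh: it is an average of psh functions that is locally uniformly bounded above, and it is not identically $-\infty$ because $u\ge\phi+v$. For the convergence, fix $K\Subset\Omega$ and enlarge it to a compact $\mathbb T^n$-invariant set $K'$. Then $\|u_k-u^\tau\|_{L^1(K)}\le \sum_i\int_{A_{k,i}}\|u\circ\tau_{\zeta_{k,i}}-u\circ\tau_\zeta\|_{L^1(K)}\,d\sigma(\zeta)$, where $A_{k,i}$ are the partition pieces. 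The map $\zeta\mapsto u\circ\tau_\zeta\in L^1(K)$ is uniformly continuous on $\mathbb T^n$, by continuity of translations in $L^1(K')$ (approximate $u$ by continuous functions in $L^1(K')$; since $\tau_\zeta$ is a measure-preserving rotation, the approximation error is uniform in $\zeta$). Hence the right-hand side tends to $0$.

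The main obstacle is justifying this Riemann-sum approximation rigorously, given that $u$ is only $L^1_{loc}$ and may be unbounded above, as $\phi$ may be. The rotation-invariance of the approximating functions in $L^1$ handles the convergence. If lower semicontinuity needed a monotone sequence instead, one could pass to $(\sup_{j\ge k}u_j)^*$ as in the proof of Theorem \ref{thm:compactness}, using monotonicity of $E_\phi$.
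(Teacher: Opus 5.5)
Your proof is correct. Its backbone is the same as the paper's: $E_\phi$ is invariant under rotations because $\phi$ is $\mathbb T^n$-invariant; convexity of $E_\phi$ is applied to finite averages of the rotates $u\circ\tau_\zeta$; and Lemma \ref{lem: bounded energy} passes to the limit $u^\tau$.

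Where you differ is in how the Riemann sums converge. The paper approximates $\sigma_{\mathbb T^n}$ by equal-weight Dirac masses and argues pointwise, $u^\tau_N(z)\to u^\tau(z)$. This requires $\zeta\mapsto u(\tau_\zeta z)$ to be continuous, and $u-\phi$ to be bounded for the passage to $L^1_{loc}$. The paper therefore adds two reductions, each closed by another monotone limit through Lemma \ref{lem: bounded energy}:
\begin{itemize}
\item regularizing along the torus by $v_\varepsilon=(\sup_{|\zeta-1|<\varepsilon}u\circ\tau_\zeta)^*$;
\item truncating by $\max(u,\phi-t)$.
\end{itemize}
You instead note that Lemma \ref{lem: bounded energy} only needs $L^1_{loc}$ convergence. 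You obtain it directly from the strong continuity of the measure-preserving torus action on $L^1(K')$, for a compact $\mathbb T^n$-invariant $K'\Subset\Omega$, together with a Tonelli/Minkowski estimate.

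This treats an arbitrary $u\in\mathcal E^1_\phi(\Omega)$ in one pass, with no boundedness of $u-\phi$ and no continuity in $\zeta$. In particular, you never need the continuity of $\zeta\mapsto v_\varepsilon(\tau_\zeta z)$, which the paper uses without proof and which is not obvious. The paper's route offers no real advantage beyond pointwise convergence of the averages.

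Two small points to make explicit:
\begin{itemize}
\item The weights must be $\sigma_{k,i}=\sigma(A_{k,i})$; your error bound requires this.
\item $u^\tau\not\equiv-\infty$ is best justified by $\int_K|u\circ\tau_\zeta|\,dV\le\|u\|_{L^1(K')}$, which gives $u^\tau\in L^1_{loc}$. Invoking $u\ge\phi+v$ alone would still require knowing that $v^\tau\not\equiv-\infty$.
\end{itemize}
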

\begin{proof}
	Take $u\in \mathcal E^1_{\phi}(\Omega)$ and let $u^{\tau}$ denote its torification function, as defined in \eqref{eq: torification}. We proceed in several steps. 
	
{\bf Step 1.}  Assume that $u-\phi$ is bounded and $\zeta \mapsto u\circ \tau_{\zeta}(z)$ is continuous for each $z$ fixed. The measure $\sigma_{\mathbb T^n}$ is the product of the normalized Lebesgue measure on the circle $\mathbb S^1$, which can be written as the weak limit of equipartitioned Dirac masses. We can thus write 
\[
\sigma_{\mathbb T^n} = \lim_{N\to +\infty}  \frac{1}{N}\sum_{j=1}^n \delta_{\zeta_{N,j}},
\]
where the convergence holds in the weak sense of probability measures on the torus $\mathbb T^n$. 
Since $\zeta \mapsto u(\tau_{\zeta}(z))$ is continuous for each $z$ fixed,  the integral in \eqref{eq: torification} can be written as  
\[
u^{\tau}(z) = \lim_N u^{\tau}_N(z):= \lim_N \frac{1}{N}\sum_{j=1}^N u_{N,j}(z), \; u_{N,j}(z) = u\circ \tau_{\zeta_{N,j}}(z). 
\]
Since $\phi$ is $\tau_{\zeta}$-invariant for any $\zeta$,  we also have 
\[
E_{\phi}(u\circ \tau_{\zeta}) = \frac{1}{n+1} \sum_{p=0}^n \int_{\Omega} (\phi-u)\circ \tau_{\zeta} (dd^c u\circ \tau_{\zeta})^p \wedge (dd^c \phi)^{n-p} =  E_{\phi}(u). 
\]
By convexity of $E_{\phi}$, it follows that $E_{\phi}(u^{\tau}_N) \leq E_{\phi}(u)$. Observe also that $u^{\tau}_N$ converges to $u^{\tau}$ in $L^1_{loc}(\Omega)$.  By Lemma \ref{lem: bounded energy} we thus have $u^{\tau} \in \mathcal E^1_{\phi}(\Omega)$ and $E_{\phi}(u^{\tau})\leq E_{\phi}(u)$. 

{\bf Step 2.} We next assume that $u-\phi$ is bounded but we remove the continuity assumption in the first step. We define 
\[
v_{\varepsilon}(z) := \left (\sup_{|\zeta-1|<\varepsilon} u\circ \tau_{\zeta}(z) \right)^*. 
\]
Then $v_{\varepsilon} \in \mathcal E^1_{\phi}(\Omega)$ and it satisfies the continuity assumption in Step 1. Moreover, $E_{\phi}(v_{\varepsilon}) \leq E_{\phi}(u)$. Since $v_{\varepsilon}\searrow u$, we have $v_{\varepsilon}^{\tau} \searrow u^{\tau}$ and  by Lemma \ref{lem: bounded energy} yields $u^{\tau} \in \mathcal E^1_{\phi}(\Omega)$ with $E_{\phi}(u^{\tau})\leq E_{\phi}(u)$. 

{\bf Step 3.} To treat the general case we approximate $u$ by $v_t:=\max(u,\phi-t$) and apply the previous steps, noting that $E_{\phi}(\max(u,\phi-t)) \leq E_{\phi}(u)$.

\end{proof}

We are now ready to prove Theorem \ref{thmC}. 

\begin{proof}[Proof of Theorem \ref{thmC}]
Using Proposition \ref{prop: energy MAR-MAC} we can  prove the implications $(1) \Rightarrow (2)$ and $(3)\Rightarrow (1)$ exactly as we have done in Theorem \ref{thmA}. 

We provide below two proofs of the implication $(2)\Rightarrow (3)$. The first is obtained by adapting the proof of Theorem \ref{thmA}, while the second relies on showing that condition $(2)$ in Theorem \ref{thmC} implies that the torification measure satisfies condition $(2)$ in Theorem \ref{thmA}.

We explain the first proof. 
Take a Borel measure on $D$ satisfying condition $(2)$, and write $\nu_j={\bf 1}_{D_j}\nu$, where $(D_j)$ is a sequence of relatively compact convex subsets of $D$, increasing to $D$. Let $\hat{\nu}_j$ be the unique Toric measure in $\Omega$ such that $(\tau_{\zeta})_*(\hat {\nu}_j)=\nu_j$, for all $\zeta\in \mathbb T^n$. Then $\hat{\nu}_j$ vanishes on pluripolar sets and $\hat{\nu}_j(\Omega)<+\infty$. By \cite[Theorem 5.14]{Ceg04}, there exists a unique $\hat{v}_j\in \mathcal F^a(\Omega)$ such that $(dd^c \hat{v}_j)^n = \hat{\nu}_j$. By uniqueness, $\hat{v}_j=v_j\circ L$ for some $v_j\in {\rm CV}(D)$. By \cite[Proposition 7.3]{LZ25}, we have $v_j \in {\rm CV}_0(D)$, hence $\tilde{v}_j\in \mathcal E^0(\Omega)$.  It thus follows from Lemma 
 \ref{continuity I mu}, Lemma \ref{lem:existence minimizer} and Theorem \ref{thm: minimizer is a solution},  that  for each $j\in \N$, there exists $\hat{u}_j \in \mathcal{E}^1_{\phi}(\Omega)$ such that  $(dd^c \hat{u}_j)^n = \hat{\nu}_j$. By uniqueness $\hat{u}_j=u_j\circ L$. The remaining part of this proof is the same as in Theorem \ref{thmA}.

Let us now give an alternative proof of $(2) \Rightarrow (3)$. We will check that $\mathcal E^1_{\phi}(\Omega)\subset \phi+L^1(\Omega,\mu)$, where $\mu=\hat{\nu}$. Take $u\in \mathcal E^1_{\phi}(\Omega)$ and let $u^{\tau}$ be its torification function as defined in \eqref{eq: torification}. By Lemma \ref{lem: torification in E1} we have $u^{\tau} \in \mathcal E^1_{\phi}(\Omega)$. Since $\mu$ and $\phi$ are $\tau_{\zeta}$-invariant, using Fubini's theorem we can write
\begin{flalign*}
	\int_{\Omega} (\phi-u) d\mu &= \int_{\mathbb T^n}\int_{\Omega} (\phi-u)(\tau_{\zeta}(z))\;  d\mu(z) d\sigma_{\mathbb T^n}(\zeta)\\
	&= \int_{\Omega}\int_{\mathbb T^n} (\phi-u)(\tau_{\zeta}(z))\;   d\sigma_{\mathbb T^n}(\zeta) d\mu(z) \\
	&= \int_{\Omega} (\phi-u^{\tau})(z) d\mu(z)\\
	& = \int_D (\varphi-v) {\mathrm M}_{\mathbb R}(v), 
\end{flalign*}
where $u^{\tau}=v\circ L$ with $v\in \mathcal E^1_{\varphi}(D)$. 
The last integral is finite by assumption $(2)$. Hence $\mu$ satisfies $(1)$ in Theorem \ref{thmA}. There thus exists $\hat{w}\in \mathcal E^1_{\phi}(\Omega)$ such that $(dd^c \hat{w})^n =\hat{\nu}$. By uniqueness the solution $\hat{w}$ is then toric and $\hat{w} = w \circ L$ in $\Omega$. By Proposition \ref{prop: energy MAR-MAC} we have $w\in \mathcal E^1_{\varphi}(D)$ and this completes the proof. 
\end{proof}

%\bibliographystyle{alpha}
%\bibliography{//Users/chinhlu/Desktop/Bib/biblio}

\begin{thebibliography}{{\AA}CK{\etalchar{+}}09}

\bibitem[{\AA}C07]{AC07}
Per {\AA}hag and Rafa{\l} Czy{\.z}.
\newblock On the {Cegrell} classes.
\newblock {\em Math. Z.}, 256(2):243--264, 2007.

\bibitem[{\AA}CC12]{ACC12}
Per {\AA}hag, Urban Cegrell, and Rafa{\l} Czy{\.z}.
\newblock On {Dirichlet}'s principle and problem.
\newblock {\em Math. Scand.}, 110(2):235--250, 2012.

\bibitem[{\AA}CCP09]{ACCP09}
Per {\AA}hag, Urban Cegrell, Rafa{\l} Czy{\.z}, and Ho{\`a}ng~Hi{\d{\ecircumflex}}p Pha{\d{m}}.
\newblock Monge-{Amp{\`e}re} measures on pluripolar sets.
\newblock {\em J. Math. Pures Appl. (9)}, 92(6):613--627, 2009.

\bibitem[{\AA}CK{\etalchar{+}}09]{ACKPZ09}
Per {\AA}hag, Urban Cegrell, S{\l}awomir Ko{\l}odziej, Hoang~Hiep Pha{\d{m}}, and Ahmed Zeriahi.
\newblock Partial pluricomplex energy and integrability exponents of plurisubharmonic functions.
\newblock {\em Adv. Math.}, 222(6):2036--2058, 2009.

\bibitem[{\AA}CLR25]{ACLR25}
Per {\AA}hag, Rafa{\l} Czy{\.z}, Chinh~H. Lu, and Alexander Rashkovskii.
\newblock Geodesic connectivity and rooftop envelopes in the {Cegrell} classes.
\newblock {\em Math. Ann.}, 391(3):3333--3361, 2025.

\bibitem[AZ67]{AZ67}
A.~D. Aleksandrov and V.~A. Zalgaller.
\newblock {\em Intrinsic geometry of surfaces}, volume~15 of {\em Transl. Math. Monogr.}
\newblock American Mathematical Society (AMS), Providence, RI, 1967.

\bibitem[BB13]{BB13}
Robert~J. Berman and Bo~Berndtsson.
\newblock Real {Monge}-{Amp{\`e}re} equations and {K{\"a}hler}-{Ricci} solitons on toric log {Fano} varieties.
\newblock {\em Ann. Fac. Sci. Toulouse, Math. (6)}, 22(4):649--711, 2013.

\bibitem[BB22]{BB22}
Robert~J. Berman and Bo~Berndtsson.
\newblock Moser-{T}rudinger type inequalities for complex {M}onge-{A}mp\`ere operators and {A}ubin's ``hypoth\`ese fondamentale''.
\newblock {\em Ann. Fac. Sci. Toulouse Math. (6)}, 31(3):595--645, 2022.

\bibitem[BBGZ13]{BBGZ13}
Robert~J. Berman, S{\'e}bastien Boucksom, Vincent Guedj, and Ahmed Zeriahi.
\newblock A variational approach to complex {Monge}-{Amp{\`e}re} equations.
\newblock {\em Publ. Math., Inst. Hautes {\'E}tud. Sci.}, 117:179--245, 2013.

\bibitem[BGZ09]{BGZ09}
Slimane Benelkourchi, Vincent Guedj, and Ahmed Zeriahi.
\newblock Plurisubharmonic functions with weak singularities.
\newblock In {\em Complex analysis and digital geometry. Proceedings from the Kiselmanfest, Uppsala, Sweden, May 2006 on the occasion of Christer Kiselman's retirement}, pages 57--74. Uppsala: Univ. Uppsala, 2009.

\bibitem[B{\l}o93]{Bl93}
Zbigniew B{\l}ocki.
\newblock Estimates for the complex {Monge}-{Amp{\`e}re} operator.
\newblock {\em Bull. Pol. Acad. Sci., Math.}, 41(2):151--157, 1993.

\bibitem[BT76]{BT76}
Eric Bedford and B.~A. Taylor.
\newblock The {Dirichlet} problem for a complex {Monge}-{Amp{\`e}re} equation.
\newblock {\em Invent. Math.}, 37:1--44, 1976.

\bibitem[BT82]{BT82}
Eric Bedford and B.~A. Taylor.
\newblock A new capacity for plurisubharmonic functions.
\newblock {\em Acta Math.}, 149(1-2):1--40, 1982.

\bibitem[Ceg84]{Ceg84}
Urban Cegrell.
\newblock On the {Dirichlet} problem for the complex {Monge}-{Amp{\`e}re} operator.
\newblock {\em Math. Z.}, 185:247--251, 1984.

\bibitem[Ceg98]{Ceg98}
Urban Cegrell.
\newblock Pluricomplex energy.
\newblock {\em Acta Math.}, 180(2):187--217, 1998.

\bibitem[Ceg04]{Ceg04}
Urban Cegrell.
\newblock The general definition of the complex {Monge}-{Amp{\`e}re} operator.
\newblock {\em Ann. Inst. Fourier}, 54(1):163--184, 2004.

\bibitem[Ceg08]{Ceg08}
Urban Cegrell.
\newblock A general {D}irichlet problem for the complex {M}onge-{A}mp\`ere operator.
\newblock {\em Ann. Polon. Math.}, 94(2):131--147, 2008.

\bibitem[CGSZ19]{CGSZ19}
Dan Coman, Vincent Guedj, Sibel Sahin, and Ahmed Zeriahi.
\newblock Toric pluripotential theory.
\newblock {\em Ann. Pol. Math.}, 123:215--242, 2019.

\bibitem[Fig17]{Fig17}
Alessio Figalli.
\newblock {\em The {Monge}-{Amp{\`e}re} equation and its applications}.
\newblock Zur. Lect. Adv. Math. Z{\"u}rich: European Mathematical Society (EMS), 2017.

\bibitem[GLZ19]{GLZ19}
Vincent Guedj, Chinh~H. Lu, and Ahmed Zeriahi.
\newblock Plurisubharmonic envelopes and supersolutions.
\newblock {\em J. Differ. Geom.}, 113(2):273--313, 2019.

\bibitem[Gut16]{Gut16}
Cristian~E. Guti{\'e}rrez.
\newblock {\em The {Monge}-{Amp{\`e}re} equation}, volume~89 of {\em Prog. Nonlinear Differ. Equ. Appl.}
\newblock Basel: Birkh{\"a}user/Springer, 2nd edition edition, 2016.

\bibitem[GZ17]{GZbook}
Vincent Guedj and Ahmed Zeriahi.
\newblock {\em Degenerate complex {M}onge-{A}mp\`ere equations}, volume~26 of {\em EMS Tracts in Mathematics}.
\newblock European Mathematical Society (EMS), Z\"{u}rich, 2017.

\bibitem[Har06]{Har06}
David Hartenstine.
\newblock The {Dirichlet} problem for the {Monge}-{Amp{\`e}re} equation in convex (but not strictly convex) domains.
\newblock {\em Electron. J. Differ. Equ.}, 2006:9, 2006.
\newblock Id/No 138.

\bibitem[Jos78]{Jos78}
Bengt Josefson.
\newblock On the equivalence between locally polar and globally polar sets for plurisubharmonic functions on {{\(C^n\)}}.
\newblock {\em Ark. Mat.}, 16:109--115, 1978.

\bibitem[Ko{\l}98]{Kol98}
S{\l}awomir Ko{\l}odziej.
\newblock The complex {M}onge-{A}mp\`ere equation.
\newblock {\em Acta Math.}, 180(1):69--117, 1998.

\bibitem[KR81]{KR81}
Norberto Kerzman and Jean-Pierre Rosay.
\newblock Fonctions plurisousharmoniques d'exhaustion bornees et domaines taut.
\newblock {\em Math. Ann.}, 257:171--184, 1981.

\bibitem[Le26]{Le26}
Nam~Q. Le.
\newblock A {Variational} {Approach} to {Degenerate} {Monge}--{Amp{\`e}re} {Equations} with {Mixed} {Measures} and {Monotonicity}.
\newblock Preprint, {arXiv}:2603.19114 [math.{AP}] (2026), 2026.

\bibitem[Lu15]{Lu15}
Chinh~H. Lu.
\newblock A variational approach to complex {Hessian} equations in {{\(\mathbb{C}^n\)}}.
\newblock {\em J. Math. Anal. Appl.}, 431(1):228--259, 2015.

\bibitem[LZ25]{LZ25}
Chinh~H. Lu and Ahmed Zeriahi.
\newblock A new approach to the {Monge}-{Amp{\`e}re} eigenvalue problem.
\newblock Preprint, {arXiv}:2507.18409, 2025.

\bibitem[NP09]{NP09}
Van~Khue Nguyen and Hoang~Hiep Pham.
\newblock A comparison principle for the complex {Monge}-{Amp{\`e}re} operator in {Cegrell}'s classes and applications.
\newblock {\em Trans. Am. Math. Soc.}, 361(10):5539--5554, 2009.

\bibitem[Ran95]{Ran95}
Thomas Ransford.
\newblock {\em Potential theory in the complex plane}, volume~28.
\newblock London Mathematical Society Student Texts, Cambridge University Press, 1995.

\bibitem[RT77]{RT77}
Jeffrey Rauch and B.~A. Taylor.
\newblock The {Dirichlet} problem for the multidimensional {Monge}-{Amp{\`e}re} equation.
\newblock {\em Rocky Mt. J. Math.}, 7:345--364, 1977.

\end{thebibliography}

\end{document}